\DeclareRobustCommand{\looongrightarrow}{%
  \DOTSB\relbar\joinrel\relbar\joinrel\relbar\joinrel\rightarrow
}
\newcommand{\etalchar}[1]{$^{#1}$}
\providecommand{\MR}{\relax\ifhmode\unskip\space\fi MR }
\newtheorem{theorem}{Theorem}
\numberwithin{theorem}{section} 
\newtheorem{proposition}[theorem]{Proposition}
\newtheorem{corollary}[theorem]{Corollary}  
\newtheorem{lemma}[theorem]{Lemma}
\newtheorem{conjecture}[theorem]{Conjecture}
\theoremstyle{definition}
\theoremstyle{definition}\newtheorem{remark}[theorem]{Remark}
\theoremstyle{definition}\newtheorem{example}[theorem]{Example}
\theoremstyle{definition}
\theoremstyle{definition}
\theoremstyle{definition}\newtheorem*{definition*}{Definition}  
\newtheorem*{theorem*}{Theorem}
\newtheorem*{lemma*}{Lemma}
\newtheorem*{prop*}{Proposition}
\newtheorem*{prob*}{Problem}
\newtheorem*{coro*}{Corollary}  
\numberwithin{equation}{section}
\newcommand\Q{\mathbb{Q}}
\newcommand\Z{\mathbb{Z}}
\newcommand\F{\mathbb{F}}
\newcommand{\calC}{\mathcal{C}}
\newcommand{\calD}{\mathcal{D}}
\newcommand{\J}{\mathfrak{J}}
\newcommand\Lk[1]{\mathcal{L}^{#1}}
\newcommand\K{\mathcal{K}}
\newcommand\Projective{{\bf P}} 
\newcommand\kgen{\mathcal{K}^{\hbox{\scriptsize gen}}} 
\newcommand\hkgen{\hat{\mathcal{K}}^{\hbox{\scriptsize gen}}} 
\newcommand\crosen{\mathcal{C}_{\hbox{\scriptsize rosen}}} 
\newcommand\ksqr{\mathcal{K}^{\hbox{\scriptsize sqr}}} 
\newcommand\kgaudry{\mathcal{K}^{\hbox{\scriptsize fast}}}
\newcommand\spaceR{{X_R}}
\newcommand\spaceS{{X_S}}
\newcommand\spaceRS{{X_{R,S}}}
\newcommand\spaceRi[1]{{X^{(#1)}_{R}}}
\newcommand\spaceSi[1]{{X^{(#1)}_{S}}}
\newcommand\spaceRSi[1]{{X^{(#1)}_{R,S}}}
\newcommand\basisR{B_R}
\newcommand\basisS{B_S}
\newcommand\basisRS{B_{R,S}}
\newcommand\basisRi[1]{{B^{(#1)}_R}}
\newcommand\basisSi[1]{{B^{(#1)}_S}}
\newcommand\cost[1]{\textsf{Cost}_{\text{#1}}}
\begin{document}

\title{Isogenies on Kummer Surfaces}

\author{Maria Corte-Real Santos}
\address{University College London, London WC1E 6BT, United Kingdom}
\email{maria.santos.20@ucl.ac.uk}

\author{E. Victor Flynn}
\address{Mathematical Institute, University of Oxford, Andrew Wiles
Building, Radcliffe Observatory Quarter, Woodstock Road, Oxford
OX2 6GG, United Kingdom}
\email{flynn@maths.ox.ac.uk}

\keywords{Jacobian, Abelian Variety}
\subjclass{11G30, 11G10, 14H40}
\thanks{21 September, 2024}

\begin{abstract}
We first give a cleaner and more direct approach to the derivation
of the Fast model of the Kummer surface. We show how
to construct efficient $(N,N)$-isogenies, for any odd~$N$,
both on the general Kummer surface and on the Fast model.
\end{abstract}

\maketitle

\section{Introduction}
\label{section1}
Various models have been constructed of the Kummer surface 
related to curves of genus~$2$, with more recent work
emphasising models which are amenable to more efficient
computations. There has also been considerable recent
interest in computing isogenies on these surfaces. The intention of
this article is to first give a more explicit derivation
of the \emph{Fast} Kummer surface model (a model introduced by
Gaudry~\cite{GaudryFastGenus2} in the context of cryptography,
and explored algorithmically by Chudnovsky and 
Chudnovsky~\cite{Chudnovsky} which allows for particularly 
fast computations), and then to construct~$(N,N)$-isogenies 
on this model, for any odd~$N$.
\par  
In~\Cref{section2}, we recall the General Kummer model 
of the Kummer surface in $\Projective^3$,
given on p.18 of~\cite{CasselsFlynnBook}, and summarise
a number of associated ideas, such as the linear map induced
by adding a point of order~$2$, certain biquadratic forms
which relate to the group law on the Jacobian variety,
and the Richelot isogeny. 

In~\Cref{section3}, we summarise the 
\emph{Squared} Kummer model described by Cosset~\cite[Ch. 4]{CossetThesis},
and the linear map which relates it to the General Kummer
of the Rosenhain curve given by Chung, Costello 
and Smith~\cite[\S 3]{ChungCostelloSmithFastUniform}. 
In~\Cref{section4}, we summarise
the Fast Kummer surface model, described by Gaudy~\cite{GaudryFastGenus2},
which also allows fast computations and is~$(2,2)$-isogenous
to the model in~\Cref{section3}. The derivation
by Gaudry~\cite{GaudryFastGenus2} uses several identities
of theta functions as justification, an extension of the ground
field was used, and there was no explicit Jacobian of which
this was the Kummer Surface, but instead a $(2,2)$-isogenous
Jacobian was given over the field extension. 
The first main contribution of this article is that, in~\Cref{section4},
we give an entirely algebraic rederivation of the
Fast Kummer surface $\kgaudry$ by finding a
curve~$\calD$ together with a linear
map  
from the General Kummer surface of~$\calD$
in~\Cref{curveD} to the Fast Kummer in~\Cref{Gaudryeqn}.
Our aim is that this will make both the derivation and
application of this elegant form more accessible to a wider audience
who may use it over number fields for such things as height
constants and descents. Everything is performed over the ground field
of the parameters,
and we now have an explicit Jacobian (namely the Jacobian variety
of~$\calD$) of which $\kgaudry$ is the Kummer surface.

In~\Cref{section5}, we describe a general method to construct 
$(N,N)$-isogenies between Kummer surfaces with efficiently computable 
biquadratic forms, for any odd~$N$, using purely algebraic methods. 
More specifically, we show how to 
obtain the $(N,N)$-isogeny as a composition 
$\varphi = \lambda \circ \psi$, where $\psi$ has the desired kernel 
and $\lambda$ is a linear map which moves the image into the 
correct Kummer form. We give algorithms to compute $\psi$ for 
any Kummer surface model with efficiently computable biquadratic forms. 
In the case of General and Fast Kummer surfaces, we explicitly 
show how to obtain this final linear map $\lambda$ 
in~\Cref{subsec:isogenies-general,subsec:isogenies-fast}, respectively.
This culminates in two further 
main contributions: an algorithm \textsf{GetIsogeny} 
(see~\Cref{alg:getisogeny} in~\Cref{section5}) which
recovers the explict formul\ae{} describing the $(N,N)$-isogeny 
$\varphi$ from the 
$N$-torsion points generating the kernel; and an algorithm \textsf{GetImage}
(see~\Cref{alg:getimage} in~\Cref{section5}) which computes the image of the isogeny $\varphi$.
In particular, we considerably refine
the scaling step of the algorithm by exploiting
maps and symmetries on $\kgaudry$ which allow us to avoid expensive
square root operations in the ground field.
These explicit algebraic derivations and descriptions will be of
potential use not only
in the Cryptographic community, but also to those researchers in
Arithmetic Geometry who which to perform descent via isogeny
over number fields.
Throughout, we provide concrete complexities for all 
our algorithms, highlighting the main bottleneck of our work: 
finding degree-$N$ homogeneous forms that are invariant under 
translation by an $N$-torsion point on the Kummer surface. 
\par
Since the Fast Kummer surfaces yield the most efficient 
$(N,N)$-isogenies, 
in~\Cref{section6} we provide concrete timings for running
our methods on this model for odd primes $N \leq 19$.
\par
The software accompanying this paper is written in {\tt MAGMA}
~\cite{magma} and Maple \cite{maple},
and is publicly available under the MIT license. It is
available at 
\begin{center}
\url{https://github.com/mariascrs/NN_isogenies}.
\end{center}
\par
\noindent
\subsection{Comparison to other methods} 
We briefly compare our methods
for computing $(N,N)$-isogenies between different models of 
Kummer isogenies for odd $N$ to those in previous literature. 
The case $N = 3$ and $5$ for the General Kummer surface was 
determined by Bruin, Flynn and Testa~\cite{BruinFlynnTesta33Isogenies}
and by~\cite{Flynn55Isogenies}, respectively. We 
recover these isogenies in~\Cref{section5}. 
Turning to a different model, Corte-Real Santos, Costello and 
Smith~\cite{CorteRealSantosCostelloSmithIsogenies} give 
optimised formul\ae{} for
$(3,3)$-isogenies between Fast Kummer surfaces, and touch 
on how these methods could be generalised to any odd $N$.
We then extend these ideas to give
a method for computing $(N,N)$-isogenies between 
Fast Kummer surfaces for any odd $N \geq 5$. 
\par
Bisson, Cosset, Lubicz, and Robert launched an ambitious program
~\cite{BissonThesis,CossetThesis,LubiczRobertComputing,LubiczRobertArithmetic,RobertThesis10,RobertThesis21,LubiczRobertFast} 
based on the theory of theta functions~\cite{Mumford}
to provide asymptotically
efficient algorithms for arbitrary odd $N$ (and beyond dimension $2$ to arbitrarily high
dimension). {\tt AVIsogenies}~\cite{AVIsogenies} is a software package written in 
{\tt MAGMA} based on their results, and 
is publicly available. 

We take a different perspective on the problem and analyse 
to what extent the method depicted by Corte-Real Santos, Costello and 
Smith~\cite{CorteRealSantosCostelloSmithIsogenies} can be optimised. 
Our approach allows us to describe algorithms that output the isogeny formul\ae{}
as well as the image Kummer surface, relying solely on simple linear algebra.
To the best of our knowledge, the line of work relying on theta functions 
does not recover the formul\ae{} describing the $(N,N)$-isogeny, which 
is interesting in its own right. In this sense, 
the purposes of {\tt AVIsogenies} and our algorithm
are somewhat different. Indeed, {\tt AVIsogenies} inputs an initial Kummer
surface, isogeny kernel and a point, and outputs
the image point and (theta null point of) the image Kummer surface; 
our algorithm inputs an initial Kummer surface
and isogeny kernel, and outputs the explicit defining equations
of the image Kummer surface and of the isogeny (which can then
incidentally be used, if desired, to find images of specific
points).
\par
Though the algorithms developed using theta functions
boast better asymptotic complexity, preliminary experimental evidence suggest that our 
software outperforms {\tt AVIsogenies} for small odd $N$ 
(see~\Cref{section6} for further details). However,
for a precise and fair comparison between implementations, 
exact operation counts are needed.
\par
Furthermore, our methods produce $K$-rational $(N,N)$-isogeny 
formul\ae{} for \emph{any} Kummer surface model given $K$-rational kernel 
generators (provided it has efficiently computable biquadratic forms).
In this way, we do not require full $K$-rational $2$-torsion in order
to have a $K$-rational theta structure.

\subsection{Acknowledgements}
We thank Craig Costello and Sam Frengley for many fruitful discussions throughout the preparation 
of this paper.
We further thank Benjamin Smith for discussions that lead to the statement of~\Cref{prop:dim-and-basis}
and~\Cref{conj:basis-space}.
We are greatful to Kamal Khuri-Makdisi for kindly explaining a 
proof to~\Cref{prop:dim-and-basis}, 
and allowing us to include it in this article.
The first author was supported by UK EPSRC grant EP/S022503.

\bigskip\bigskip\bigskip

\section{Generalities on Kummer surfaces}\label{section2}
Let~$K$ be any field (not of characteristic~$2$) and
consider a general curve of genus~$2$
\begin{equation}\label{genus2general}
y^2 = \mathfrak{F}(x) 
= f_6 x^6 + f_5 x^5 + f_4 x^4 + f_3 x^3 + f_2 x^2 + f_1 x + f_0,
\end{equation}
defined over~$K$.
We represent elements of the Jacobian variety by
$\{ (x_1,y_1), (x_2,y_2) \}$, as a shorthand for the divisor
class of $(x_1,y_1) + (x_2,y_2) - \infty^+ - \infty^-$,
where~$\infty~^+$ and~$\infty^-$ denote the points on the
non-singular curve that lie over the singular point at infinity.
The Kummer surface has
an embedding (see p.18 of~\cite{CasselsFlynnBook})
in $\Projective^3$ given by $(k_1,k_2,k_3,k_4)$, where 
\begin{equation}\label{kummercoords}
k_1 = 1,\  k_2 = x_1 + x_2 ,\  k_3 = x_1 x_2,\
k_4 = (F_0 (x_1 ,x_2 )-2y_1 y_2 )/(x_1 -x_2 )^2,
\end{equation}
\noindent and where
\begin{equation}\label{F0defn}
\begin{split}
F_0 (x_1 ,x_2 )= &2 f_0 + f_1 (x_1 +x_2 ) + 2 f_2 (x_1 x_2 )
+ f_3 (x_1 x_2) (x_1 +x_2 ) \\
&+ 2 f_4 (x_1 x_2 )^2 + f_5 (x_1 x_2 )^2 (x_1 + x_2 )
+2 f_6 (x_1 x_2 )^3.
\end{split}
\end{equation}
The defining equation of the Kummer surface is given by 
\begin{equation}\label{kummerequation}
\kgen : F_1(k_1,k_2,k_3) k_4^2 + F_2(k_1,k_2,k_3) k_4 + F_3(k_1,k_2,k_3) = 0,
\end{equation}
where $F_1$, $F_2$, $F_3$ are given by:
\begin{eqnarray*}
F_1(k_1,k_2,k_3) &=& k_2^2 - 4 k_1 k_3, \\
F_2(k_1,k_2,k_3) &=&
-2 \left( 2 k_1^3 f_0 + k_1^2 k_2 f_1 + 2 k_1^2 k_3 f_2 + k_1 k_2 k_3 f_3
+ 2 k_1 k_3^2 f_4  \right. \\
  &~& \left. + k_2 k_3^2 f_5 + 2 k_3^3 f_6 \right), \\
F_3(k_1,k_2,k_3) &=&
- 4 k_1^4 f_0 f_2 + k_1^4 f_1^2 - 4 k_1^3 k_2 f_0 f_3 - 2 k_1^3 k_3 f_1 f_3
- 4 k_1^2 k_2^2 f_0 f_4 \\
&~& + 4 k_1^2 k_2 k_3 f_0 f_5 - 4 k_1^2 k_2 k_3 f_1 f_4 - 4 k_1^2 k_3^2 f_0 f_6
+ 2 k_1^2 k_3^2 f_1 f_5 \\
&~& - 4 k_1^2 k_3^2 f_2 f_4  + k_1^2 k_3^2 f_3^2 - 4 k_1 k_2^3 f_0 f_5
+ 8 k_1 k_2^2 k_3 f_0 f_6 - 4 k_2^4 f_0 f_6 \\
&~& - 4 k_1 k_2^2 k_3 f_1 f_5 + 4 k_1 k_2 k_3^2 f_1 f_6
- 4 k_1 k_2 k_3^2 f_2 f_5 - 2 k_1 k_3^3 f_3 f_5 \\
&~& - 4 k_2^3 k_3 f_1 f_6 -4 k_2^2 k_3^2 f_2 f_6
-4 k_2 k_3^3 f_3 f_6 - 4 k_3^4 f_4 f_6 + k_3^4 f_5^2.
\end{eqnarray*}
We shall refer to this model as $\kgen$, the General Kummer model.
We also recall from p.65 of~\cite{CasselsFlynnBook} that there
are local parameters~$s_1,s_2$ on the Jacobian variety, and
all coordinates of the Kummer surface can be written as formal
power series in these paramaters. We refer the reader
to~\cite{CasselsFlynnBook} for the definition of~$s_1,s_2$,
which we shall not require here. We merely note, for future reference,
that the coordinates of the General Kummer model are as follows up
to the degree~$6$ terms, when written as formal power series in
the local parameters.
\begin{equation}\label{k1k2k3k4ps}
\begin{split}
k_1 = &s_2^2 - f_2 s_2^4 - f_6 s_1^4 + 4 f_0 f_4 s_2^6 + 8 f_0 f_5 s_1 s_2^5
   + 18 f_0 f_6 s_1^2 s_2^4 - f_1 f_3 s_2^6\\ 
   &+ f_1 f_5 s_1^2 s_2^4
   + 4 f_1 f_6 s_1^3 s_2^3 + 2 f_2^2 s_2^6 + 2 f_2 f_6 s_1^4 s_2^2 
   + 2 f_4 f_6 s_1^6 + \hbox{O}(8),\\
k_2 = &2 s_1 s_2 + f_1 s_2^4 + f_3 s_1^2 s_2^2 + f_5 s_1^4 + f_0 f_3 s_2^6
   + 8 f_0 f_4 s_1 s_2^5 + 16 f_0 f_5 s_1^2 s_2^4\\ 
   &+ 40 f_0 f_6 s_1^3 s_2^3 - 2 f_1 f_2 s_2^6 + 2 f_1 f_4 s_1^2 s_2^4 
   + 6 f_1 f_5 s_1^3 s_2^3 + 16 f_1 f_6 s_1^4 s_2^2 - f_2 f_3 s_1^2 s_2^4\\ 
   &+ 2 f_2 f_5 s_1^4 s_2^2
   + 8 f_2 f_6 s_1^5 s_2 - f_3 f_4 s_1^4 s_2^2 + f_3 f_6 s_1^6 
   - 2 f_4 f_5 s_1^6 + \hbox{O}(8),\\
k_3 = &s_1^2 - f_0 s_2^4 - f_4 s_1^4 + 2 f_0 f_2 s_2^6 + 2 f_0 f_4 s_1^2 s_2^4
   + 4 f_0 f_5 s_1^3 s_2^3 + 18 f_0 f_6 s_1^4 s_2^2\\ 
   &+ f_1 f_5 s_1^4 s_2^2
   + 8 f_1 f_6 s_1^5 s_2 + 4 f_2 f_6 s_1^6 - f_3 f_5 s_1^6 + 2 f_4^2 s_1^6
   + \hbox{O}(8),\\
k_4 = &1,
\end{split}
\end{equation}
where by $\hbox{O}(8)$ we mean terms of degree at least~$8$ in~$s_1,s_2$.
\par
We note that if we perform a quadratic twist on the curve given in~\Cref{genus2general}
to give $c y^2 = \mathfrak{F}(x)$, then this induces a
linear map $(k_1,k_2,k_3,k_4) \mapsto (k_1,k_2,k_3,ck_4)$
between the Kummer surfaces.
\par
We know (see p.22 of~\cite{CasselsFlynnBook})
that addition by any point of order~$2$ gives a linear map
on the Kummer surface. Specifically, if our curve has
the form $y^2 = 
(g_2 x^2 + g_1 x + g_0)(h_4 x^4 + h_3 x^3 + h_2 x^2 + h_1 x + h_0)$,
and $r_1,r_2$ are the roots of $g_2 x^2 + g_1 x + g_0$, then
addition by $\{ (r_1,0), (r_2,0) \}$ induces on $\kgen$
the linear map
\begin{equation}\label{kummeraddorder2}
\begin{pmatrix}
k_1\\
k_2\\
k_3\\
k_4\\
\end{pmatrix}
\mapsto  
W
\begin{pmatrix}
k_1\\
k_2\\
k_3\\
k_4\\
\end{pmatrix}
\end{equation} 
where~$W$ is the matrix (reproducing (3.2.10) from~\cite{CasselsFlynnBook}):
\small
\begin{equation}\label{kummeraddorder2mtx}
\begin{pmatrix}
g_2^2 h_0 + g_0 g_2 h_2 - g_0^2 h_4 &
g_0 g_2 h_3 - g_0 g_1 h_4 &
g_1 g_2 h_3 - g_1^2 h_4 + 2 g_0 g_2 h_4 &
g_2 \\
-g_0 g_2 h_1 - g_0 g_1 h_2 + g_0^2 h_3 &
g_2^2 h_0 - g_0 g_2 h_2 + g_0^2 h_4 &
g_2^2 h_1 - g_1 g_2 h_2 - g_0 g_2 h_3 &
-g_1 \\
-g_1^2 h_0 + 2 g_0 g_2 h_0 + g_0 g_1 h_1 &
-g_1 g_2 h_0 + g_0 g_2 h_1 &
-g_2^2 h_0 + g_0 g_2 h_2 + g_0^2 h_4 &
g_0 \\
w_{41} &
w_{42} &
w_{43} &
w_{44} \\
\end{pmatrix}
\end{equation}
\normalsize
and where the entries $w_{41},w_{42},w_{43},w_{44}$ of the bottom 
row are:
\small
\begin{equation}\label{Wbottomrow}
\begin{split}
& w_{41} \colonequals -g_1 g_2^2 h_0 h_1 + g_1^2 g_2 h_0 h_2 + g_0 g_2^2 h_1^2
- 4 g_0 g_2^2 h_0 h_2 - g_0 g_1 g_2 h_1 h2 + g_0 g_1 g_2 h_0 h_3
- g_0^2 g_2 h_1 h_3,\\
& w_{42} \colonequals g_1^2 g_2 h_0 h_3 - g_1^3 h_0 h_4 - 2 g_0 g_2^2 h_0 h_3
- g_0 g_1 g_2 h_1 h_3 + 4 g_0 g_1 g_2 h_0 h_4 + g_0 g_1^2 h_1 h_4
- 2 g_0^2 g_2 h_1 h_4,\\
& w_{43} \colonequals - g_0 g_2^2 h_1 h_3 - g_0 g_1 g_2 h_2 h_3 + g_0 g_1 g_2 h_1 h_4
+ g_0 g_1^2 h_2 h_4 + g_0^2 g_2 h_3^2 - 4 g_0^2 g_2 h_2 h_4
- g_0^2 g_1 h_3 h_4,\\
& w_{44} \colonequals - g_2^2 h_0 - g_0 g_2 h_2 - g_0^2 h_4.
\end{split}
\end{equation}
\normalsize
The eigenvalues of this matrix are the square roots
of the resultant of $g_2 x^2 + g_1 x + g_0$
and $h_4 x^4 + h_3 x^3 + h_2 x^2 + h_1 x + h_0$, so this map
will be diagonalisable over~$K$ exactly when this resultant
is square in~$K$.
Furthermore, if $\{ (r_1,0), (r_2,0) \}$
and $\{ (r_3,0), (r_4,0) \}$ are points of order~$2$, 
where $r_1,r_2,r_3,r_4$ are distinct, then the corresponding matrices 
commute as affine matrices. If they have a point in common then
they anticommute.
\par
As described in Theorem~3.4.1 of~\cite{CasselsFlynnBook},
there is a $4\times 4$ matrix of 
biquadratic forms~$B_{ij}$ such that, for any points
$\mathfrak{A},\mathfrak{B}$ on the Jacobian variety,
\begin{equation}\label{biquadgen}
\Bigl( B_{ij}(k(\mathfrak{A}),k(\mathfrak{B})) \Bigr)
= \Bigl( k_i(\mathfrak{A}+\mathfrak{B}) k_j(\mathfrak{A}-\mathfrak{B})
       + k_i(\mathfrak{A}-\mathfrak{B}) k_j(\mathfrak{A}+\mathfrak{B})
  \Bigr),
\end{equation}
where $k(\mathfrak{A})$ denotes 
$(k_1(\mathfrak{A}),k_2(\mathfrak{A}),k_3(\mathfrak{A}),k_4(\mathfrak{A}))$,
the image of~$\mathfrak{A}$ in~$\kgen$, and similarly for~$k(\mathfrak{B})$.
\par
If the genus~$2$ curve has the form
\begin{equation}\label{richeloteqn}
y^2 = H_1(x) H_2(x) H_3(x),
\end{equation}
and we define the $h_{ij}$ by $H_j = H_j(x) = h_{j2} x^2 + h_{j1} x + h_{j0}$,
then the Jacobian admits a Richelot isogeny (described on p.89
of~\cite{CasselsFlynnBook}) to the Jacobian of the following
curve:
\begin{equation}\label{richelotdual}
y^2 = \Delta \bigl( H_2' H_3 - H_2 H_3' \bigr)
             \bigl( H_3' H_1 - H_3 H_1' \bigr)
             \bigl( H_1' H_2 - H_1 H_2' \bigr),
\end{equation}
where $\Delta = \hbox{det} ( h_{ij} )$. This induces a
$(2,2)$-isogeny on the associated Kummer surfaces, whose
kernel consists of the identity and the elements of order~$2$
corresponding to $H_1,H_2,H_3$. There is also a Richelot isogeny
in the other direction (the dual isogeny), and the compisition 
of these gives multiplication by~$2$.
\par
We note that all of the above identities are derivable
purely algebraically; for example, the defining equation of
the above Kummer surface can be verified directly, merely
from the fact that $y_1^2 = \mathfrak{F}(x_1)$
and $y_2^2 = \mathfrak{F}(x_2)$, where we recall here that
$\{(x_1, y_1), (x_2, y_2)\}$ is an element of the Jacobian variety.

\bigskip\bigskip\bigskip

\section{The Squared Kummer model}
\label{section3}

In this section, we shall briefly recall a model of the Kummer
surface from~\cite[Ch. 4]{CossetThesis},
which we shall call~$\ksqr$, the \emph{Squared Kummer surface}, for reasons which
will become apparent in~\Cref{section4}.
\par
We define the following quantities in terms of our parameters
$a,b,c,d \in K$.
\begin{equation}\label{ABCDdefn}
\begin{split}
&A = a^2 + b^2 + c^2 + d^2,\ 
B = a^2 + b^2 - c^2 - d^2,\\ 
&C = a^2 - b^2 + c^2 - d^2,\ 
D = a^2 - b^2 - c^2 + d^2,
\end{split}
\end{equation}
and let $\gamma = \sqrt{CD/(AB)}$. We define the Rosenhain curve
\begin{equation}\label{Crosen}
\crosen : y^2 = x(x-1)(x-\lambda)(x-\mu)(x-\nu),
\end{equation}
where
\begin{equation}\label{lambdamunu}
\lambda = \frac{a^2 c^2}{b^2 d^2},\
\mu = \frac{c^2 (1 + \gamma)}{d^2 (1 - \gamma)},\
\nu = \frac{a^2 (1 + \gamma)}{b^2 (1 - \gamma)}.
\end{equation}
We impose a non-degeneracy condition on $a,b,c,d$ 
that the roots of~$\crosen$ are distinct, so that it is
of genus~$2$. This curve is defined over $K(\gamma)$;
even when $\gamma \not\in K$, it is birationally equivalent
to its $K(\gamma)/K$-conjugate.

\begin{remark}
   In previous literature, for example
   in~\cite{ChungCostelloSmithFastUniform},
   the parameters for the squared Kummer surface are often
   constructed from $a,b,c,d$ rather than $a^2, b^2, c^2, d^2$,
   as above. However, since we shall be primarily interested 
   in the fast Kummer surface model, to be described in the next section, 
   our main aim here is just to link the two models, for which 
   it will be convenient to assume that the parameters 
   are squares in our ground field~$K$. 
\par
\end{remark}

It is noted by Chung, Costello and Smith 
in~\cite[\S 3]{ChungCostelloSmithFastUniform} 
that we can apply the following linear map to the coordinates
$k_1,k_2,k_3,k_4$ of $\kgen({\crosen})$, the general Kummer model
for Rosenhain curve, to obtain the coordinates $X,Y,Z,T$
of the squared Kummer surface, namely: 
\begin{equation}\label{mapgentosqr}
\begin{pmatrix}
X\\
Y\\
Z\\
T\\
\end{pmatrix}
=
M
\begin{pmatrix}
k_1\\
k_2\\
k_3\\
k_4\\
\end{pmatrix},
\end{equation}
where
\begin{equation}\label{matrixM}
M =
\begin{pmatrix}
a^2 \mu (\lambda + \nu) &
- a^2 \mu &
a^2 (\mu + 1) &
- a^2\\
b^2 \lambda \nu (1 + \mu) &
- b^2 \lambda \nu &
b^2 (\lambda + \nu) &
- b^2\\
c^2 \nu (\lambda + \mu) &
- c^2 \nu &
c^2 (\nu + 1) &
- c^2\\
d^2 \lambda \mu (1 + \nu) &
- d^2 \lambda \mu &
d^2 (\lambda + \mu) &
- d^2
\end{pmatrix}.
\end{equation}
We note that there is an error 
in~\cite[\S 3]{ChungCostelloSmithFastUniform},
 as some of the factors
were omitted from the entries of the above matrix, and we
have corrected that error here.
\par
After applying the above linear map, the Kummer equation
is transformed to the following model.
\begin{equation}\label{ksqr}
\begin{split}
\ksqr : &\Bigl( (X^2 + Y^2 + Z^2 + T^2) - F (X T + Y Z) \\
&\ \ - G (X Z + Y T) - H (X Y + Z T) \Bigr)^2 = 4 E^2 X Y Z T,
\end{split}
\end{equation}
where
\begin{equation}\label{EFGHdefn}
\begin{split}
E &= 
abcdABCD/\bigl((a^2 d^2 - b^2 c^2)(a^2 c^2 - b^2 d^2)(a^2 b^2 - c^2 d^2)\bigr),
\\
F &= (a^4 - b^4 - c^4 + d^4)/(a^2 d^2 - b^2 c^2),\\
G &= (a^4 - b^4 + c^4 - d^4)/(a^2 c^2 - b^2 d^2),\\
H &= (a^4 + b^4 - c^4 - d^4)/(a^2 b^2 - c^2 d^2).
\end{split}
\end{equation}
We note that
$\ksqr$ is defined over~$K$, even when~$\kgen$ is defined
over~$K(\gamma)$.
\par
If we consider the identity element and the points of order~$2$ 
on the Jacobian of~$\crosen$ given by~$\{ \infty, (0,0) \}$,
$\{ (\mu, 0), (\nu, 0) \}$ and $\{ (1,0), (\lambda, 0) \}$,
then addition by these induces on $\ksqr$ the linear maps which
take $(X,Y,Z,T)$ to, respectively
\begin{equation}\label{sqrimageorder2}
(X,Y,Z,T),\ (Y,X,T,Z),\ (Z,T,X,Y),\ (T,Z,Y,X).
\end{equation}
The change in coordinates given in~$M$ has therefore not
diagonalised these maps, but has certainly greatly simplified them. 

\bigskip\bigskip\bigskip

\section{Rederivation of the Fast Kummer Surface}
\label{section4}
In this section, we recall another elegant and efficient model
of the Kummer surface, described in~\cite{GaudryFastGenus2}:
\begin{equation}\label{Gaudryeqn}
\begin{split}
 \kgaudry :\ & X^4 + Y^4 + Z^4 + T^4 - F (X^2 T^2 + Y^2 Z^2)
    - G (X^2 Z^2 + Y^2 T^2)\\
& - H (X^2 Y^2 + Z^2 T^2) + 2 E X Y Z T = 0,
\end{split}
\end{equation}
where $A,B,C,D,E,F,G,H$ are as defined in~\Cref{ABCDdefn} 
and~\Cref{EFGHdefn}.
We shall refer to this as $\kgaudry$, the Fast model of
the Kummer surface, and it will be our main focus here.
\par
We note that there is a map $(X,Y,Z,T) \mapsto (X^2,Y^2,Z^2,T^2)$
from the Fast Kummer surface in~\Cref{Gaudryeqn} to
the Squared Kummer surface model in~\Cref{ksqr} in the previous section, 
hence justifying its name.
\par
The derivation of this model in~\cite{GaudryFastGenus2}
uses identities of theta functions. A connection is given
with the Rosenhain curve~$\crosen$ in~\Cref{Crosen},
namely that its Jacobian is $(2,2)$-isogenous to
that related to the Fast Kummer, and a rationality
assumption is given, namely that $\gamma = \sqrt{CD/(AB)}$
is in the ground field~$K$. However, it is difficult from
the literature to derive the Kummer surface equation 
for $\kgaudry$ purely algebraically
or to find an explicit map from a General Kummer; even if one
were to follow through the map from the Kummer of the
Rosenhain curve, this would be a $(2,2)$-isogeny, rather
than a linear map. Our intention in this section is to
show how to derive the Fast Kummer purely algebraically,
with a linear map from the General Kummer of a curve
which is defined over the ground field of~$a,b,c,d$.
\par
To find a plausible candidate, we note that~$\kgaudry$ 
and~$\ksqr$ are $(2,2)$-isogenous via the
map $(X,Y,Z,T) \mapsto (X^2,Y^2,Z^2,T^2)$,
and we saw in the last section
that there is a linear map between~$\ksqr$ and the General Kummer
of the Rosenhain curve.
So, a natural candidate for our desired curve is one which
is $(2,2)$-isogenous to the Rosenhain curve.
If we apply the Richelot formula in~\Cref{richelotdual}
to the Rosenhain curve~$\crosen$ in~\Cref{Crosen}
we obtain (up to quadratic twist) the following curve.
\begin{equation}\label{curveD}
\calD : y^2 = x(x-1)(x-\rho)(x-\sigma)(x-\tau),
\end{equation}
where
\begin{equation}\label{rhosigmatau}
\rho = C D/(A B),\ \sigma = (a c + b d) C/ ((a c - b d) A),\
\tau = (a c + b d) D/((a c - b d) B).
\end{equation}
There is a Richelot isogeny from~$\kgen(\crosen)$ to~$\kgen(\calD)$,
and a Richelot isogeny (the dual isogeny) from~$\kgen(\calD)$ 
to $\kgen(\crosen)$. We shall not require here the explicit
equations for these Richelot isogenies, since from now onwards
we shall work entirely with~$\kgen(\calD)$.
\par
We note that, unlike~$\crosen$, the curve~$\calD$ is 
automatically defined over the ground field of~$a,b,c,d$,
without the need for any rationality assumption.
We do, however, impose a non-degeneracy condition on~$a,b,c,d$
that $\infty,0,1,\rho,\tau,\sigma$ are distinct, so
that $\calD$ is a curve of genus~$2$.
\par
We also note that the Fast Kummer surface has several
diagonalised involutions, namely if one negates any two
of $X,Y,Z,T$. This suggests that this model
might be derivable from the General Kummer of~$\calD$
by finding a linear map which diagonalises these involutions.
Let~$E_0$ denote the identity element of
the Jacobian of~$\calD$, and note that
the following points of order~$2$ have no overlap in support:
$E_1 = \{ (\sigma,0),(\tau,0) \}$, $E_2 = \{\infty, (0,0)\}$
and $E_3 = \{(1,0),(\rho,0)\}$.
\par
Our aim is to find linear change in coordinates 
which simultaneously diagonalises
addition by $E_1$, $E_2$, $E_3$. Since $E_1 = E_2 + E_3$,
it is sufficient if we simultaneously diagonalise~$E_2$ and~$E_3$.
After using~\Cref{kummeraddorder2mtx} to find the matrix
which gives addition by~$E_2$ on the General Kummer, we see that it has
eigenvalues $(a c + b d) C D/((a c - b d) A B)$
and $-(a c + b d) C D/((a c - b d) A B)$, each with an
eigenspace of dimension~$2$. Similarly, the matrix which
gives addition by~$E_3$ on the General Kummer has
eigenvalues $4 (a^2 d^2-b^2 c^2) (a^2 b^2-c^2 d^2) C D/(A B (a c-b d))^2$
and $-4 (a^2 d^2-b^2 c^2) (a^2 b^2-c^2 d^2) C D/(A B (a c-b d))^2$,
each with an eigenspace of dimension~$2$.
These commute as affine matrices and so are simultaneously
diagonalisable. We find that there is a dimension~$1$ intersection of each
eigenspace of the first matrix and each eigenspace of the second matrix,
which provides four linearly independent vectors which are common 
eigenvectors of both matrices. We put these as the columns
of the following change of basis matrix~$P$.
\begin{equation}\label{matrixP}
P = \begin{pmatrix}
 c n_1^2 A^2 B^2 &
 -d n_1^2 A^2 B^2 &
 -a n_1^2 A^2 B^2 &
 b n_1^2 A^2 B^2 \\
  2 a n_1 A B m_1 &
  2 b n_1 A B m_2 &
  -2 c n_1 A B m_3 &
  2 d n_1 A B m_4 \\
  c n_1 n_2 A B C D &
  d n_1 n_2 A B C D &
  -a n_1 n_2 A B C D &
  -b n_1 n_2 A B C D\\
  2 a n_2 C D m_1 &
  -2 b n_2 C D m_2 &
  -2 c n_2 C D m_3 &
  -2 d n_2 C D m_4 
\end{pmatrix},
\end{equation}
where
\begin{equation}\label{m1m2m3m4}
\begin{split}
m_1 &= c^2 (a^4+b^4-c^4+d^4)-2 a^2 b^2 d^2,\
m_2 = d^2 (a^4+b^4+c^4-d^4)-2 a^2 b^2 c^2,\\
m_3 &= a^2 (a^4-b^4-c^4-d^4)+2 b^2 c^2 d^2,\
m_4 = b^2 (a^4-b^4+c^4+d^4)-2 a^2 c^2 d^2,\\
n_1 &= a c - b d,\
n_2 = a c + b d.
\end{split}
\end{equation}
We define the following map:
\begin{equation}\label{mapgentogaudry}
\begin{pmatrix}
X\\
Y\\
Z\\
T\\
\end{pmatrix}
=
P^{-1} 
\begin{pmatrix} 
k_1\\
k_2\\
k_3\\
k_4\\
\end{pmatrix}.
\end{equation} 
Direct substitution (which we have verified in the Maple file in~\cite{github})
now gives that General Kummer
equation is converted to the Fast Kummer equation, as
desired, which proves the following result. 
\begin{theorem}\label{gaudrymaptheorem}
The map in~\Cref{mapgentogaudry} is a linear map from
the General Kummer of~$\calD$ from~\Cref{curveD} to 
the Fast Kummer from~\Cref{Gaudryeqn}.
\end{theorem}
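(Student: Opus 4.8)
The plan is to verify \Cref{gaudrymaptheorem} by direct algebraic substitution, exactly as the authors indicate, but organised so that the computation is tractable and its correctness transparent. First I would fix the Rosenhain-type curve $\calD$ from \Cref{curveD} with $f_6 = 0$ and $f_5 = 1$ (so $\mathfrak{F}(x) = x(x-1)(x-\rho)(x-\sigma)(x-\tau)$), and write down the coefficients $f_0,\dots,f_5$ as polynomials in the elementary symmetric functions of $\{0,1,\rho,\sigma,\tau\}$, hence ultimately as rational functions of $a,b,c,d$ via \Cref{rhosigmatau}. I would then instantiate the General Kummer equation \Cref{kummerequation} — that is, the forms $F_1,F_2,F_3$ of \Cref{section2} — for this specific $\calD$, obtaining a single quartic relation $\Phi(k_1,k_2,k_3,k_4)=0$ cut out in $\Projective^3$.

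Next I would introduce the new coordinates $(X,Y,Z,T)$ by \Cref{mapgentogaudry}, i.e. $(k_1,k_2,k_3,k_4)^{\mathsf T} = P\,(X,Y,Z,T)^{\mathsf T}$ with $P$ as in \Cref{matrixP}. Substituting these linear expressions into $\Phi$ yields a quartic form $\Psi(X,Y,Z,T)$, homogeneous of degree $4$, whose coefficients are (a priori enormous) rational functions of $a,b,c,d$. The claim to be proved is that, after clearing the common denominator and scaling by the appropriate nonzero factor, $\Psi$ equals the Fast Kummer quartic \Cref{Gaudryeqn}, namely
\[
X^4 + Y^4 + Z^4 + T^4 - F(X^2T^2 + Y^2Z^2) - G(X^2Z^2 + Y^2T^2) - H(X^2Y^2 + Z^2T^2) + 2EXYZT,
\]
with $E,F,G,H$ given by \Cref{EFGHdefn}. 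Because both sides are degree-$4$ forms in four variables, it suffices to compare the $\binom{4+3}{3}=35$ coefficients; I would match them monomial by monomial, checking first that all monomials \emph{not} appearing in \Cref{Gaudryeqn} (e.g. $X^3Y$, $XYZ^2$, etc.) have vanishing coefficient in $\Psi$, then checking that the surviving coefficients are in the ratio $1 : -F : -G : -H : 2E$ dictated by \Cref{Gaudryeqn}, thereby simultaneously pinning down the overall scalar and confirming the identities for $E,F,G,H$.

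The main obstacle is purely the size of the intermediate expressions: the entries of $P$ already involve the degree-$12$-ish quantities $m_1,\dots,m_4$ and the factors $n_1,n_2,A,B,C,D$, so a naive expansion of $\Psi$ produces rational functions of very high degree in $a,b,c,d$ with common denominator a large power of $(ac-bd)(ac+bd)ABCD$ times the "resultant" denominators hidden in the $k_i$-formulae. To keep this manageable I would (i) work throughout with the substitution $p=a^2,\ q=b^2,\ r=c^2,\ s=d^2$ wherever the expressions are even, reverting to $a,b,c,d$ only for the genuinely odd pieces (the $EXYZT$ term and the odd rows of $P$); (ii) exploit the known symmetries — the three order-$2$ translations $E_1,E_2,E_3$ of \Cref{section4} act on $(X,Y,Z,T)$ by sign changes on pairs of coordinates, so $\Psi$ must be invariant under the Klein four-group generated by $(X,Y,Z,T)\mapsto(-X,-Y,Z,T)$ and $(X,Y,Z,T)\mapsto(-X,Y,-Z,T)$, which immediately kills every odd-in-two-variables monomial and cuts the work by a factor of roughly eight; and (iii) defer all denominator-clearing until the final comparison, so that coefficient ratios can be read off before the full simplification. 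With these reductions in place the verification becomes a finite, if lengthy, symbolic computation, which is precisely what the Maple file of \cite{github} carries out; the role of this proof sketch is to certify that such a check is logically sufficient — a linear change of variables takes one quartic surface in $\Projective^3$ to another, and equality of the $35$ coefficients (up to scalar) is equivalent to the two Kummer models coinciding.
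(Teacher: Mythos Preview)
Your proposal is correct and is essentially the same approach as the paper's own proof, which consists of a single sentence pointing to a direct-substitution verification in the accompanying Maple file. Your write-up is in fact more informative than the paper's, since you explain how the a priori invariance of $\Psi$ under the diagonalised $2$-torsion translations forces all monomials outside $\{X^4,\dots,Z^2T^2,XYZT\}$ to vanish before any computation, reducing the coefficient comparison to the handful of monomials actually present in \Cref{Gaudryeqn}; the paper does not spell this out.
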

We now have a direct linear map between the General Kummer
and the Fast Kummer, which was our aim in this section,
so that we now have a direct algebraic derivation of
the Fast Kummer. It also gives a simpler approach
any time computations on the associated Jacobian variety
are required. Indeed, one can use the Jacobian variety of~$\calD$
if operations are required which are not defined on the
Kummer surface, and can help with pseudo-addition on the
Kummer surface. This approach also removes the need
for any arithmetic assumption about~$\gamma$, since all
of our varieties and maps are defined over the ground
field of~$a,b,c,d$. 
\par
The relationship between the objects discussed so far is
summarised in the following commutative diagram.
\begin{equation}\label{eq:commdiagram}
\begin{array}{ccc}
\kgen(\calD) &\overset{\hbox{\scriptsize Richelot}}
{\looongrightarrow}&\kgen(\crosen)\\
\big\downarrow P^{-1}& &\big\downarrow M\\
\kgaudry&
\overset{\mathcal{S}}
{\looongrightarrow}& \ksqr,
\end{array}
\end{equation}
where: surfaces~$\kgen$, $\ksqr$ and $\kgaudry$ are as in
Equations~$($\ref{kummerequation}$)$, $($\ref{ksqr}$)$ 
and~$($\ref{Gaudryeqn}$)$, respectively; 
curves $\crosen$, $\calD$ 
are as in Equations~$($\ref{Crosen}$)$ and~$($\ref{curveD}$)$, respectively; 
linear maps $M$, $P$ are as 
in Equations~$($\ref{matrixM}$)$ and~$($\ref{matrixP}$)$, respectively; 
and $\mathcal{S}$ denotes the squaring map 
$\mathcal{S} \colon (X,Y,Z,T) \mapsto (X^2,Y^2,Z^2,T^2)$.
\par
Additions by~$E_1,E_2,E_3$ on the Fast Kummer surface have all
been diagonalised. Explicitly, addition by~$E_1$ induces the
map $(X,Y,Z,T) \mapsto (X,Y,-Z,-T)$, addition by~$E_2$
induces the map $(X,Y,Z,T) \mapsto (X,-Y,Z,-T)$
and addition by~$E_3$ induces the map $(X,Y,Z,T) \mapsto (X,-Y,-Z,T)$.
If we let $E_0,\ldots ,E_{15}$ denote the entire $2$-torsion
subgroup, the addition by these induce the map which takes
$(X,Y,Z,T)$ to, respectively:
\begin{equation}\label{order2actionGaudry}
\begin{split}
\allowdisplaybreaks
&(X,Y,Z,T),\ (X,Y,-Z,-T),\ (X,-Y,Z,-T),\ (X,-Y,-Z,T),\\
&(Y,X,T,Z),\ (Y,X,-T,-Z),\ (Y,-X,T,-Z),\ (Y,-X,-T,Z),\\
&(Z,T,X,Y),\ (Z,T,-X,-Y),\ (Z,-T,X,-Y),\ (Z,-T,-X,Y),\\
&(T,Z,Y,X),\ (T,Z,-Y,-X),\ (T,-Z,Y,-X),\ (T,-Z,-Y,X).
\end{split}
\end{equation}
The identity element on $\kgaudry$ is~$(a,b,c,d)$. If we apply the above
maps to this, we get the complete set of $2$-torsion elements
on $\kgaudry$, namely:
\begin{equation}\label{order2elementsGaudry}
\begin{split}
&(a,b,c,d),\ (a,b,-c,-d),\ (a,-b,c,-d),\ (a,-b,-c,d),\\
&(b,a,d,c),\ (b,a,-d,-c),\ (b,-a,d,-c),\ (b,-a,-d,c),\\
&(c,d,a,b),\ (c,d,-a,-b),\ (c,-d,a,-b),\ (c,-d,-a,b),\\
&(d,c,b,a),\ (d,c,-b,-a),\ (d,-c,b,-a),\ (d,-c,-b,a),
\end{split}
\end{equation}
which are the same as the list of nodes given
by Gaudry in~\cite[\S 3.4]{GaudryFastGenus2}.
We denote the action by the two torsion point 
$E_i$ by $\sigma_i: \kgaudry \rightarrow \kgaudry$
as described by~\Cref{order2actionGaudry}.
For example, $\sigma_1 : (X,Y,Z,T) \mapsto (X, Y, -Z, -T)$,
and $\sigma_4 : (X,Y,Z,T) \mapsto (Y,X,T,Z)$.
\par
After applying the linear
change of basis~$P$, we recover the biquadratic forms given by
~\cite{qDSA}, which are much simpler version than
the biquadratic forms on the General Kummer surface, as follows.
\begin{corollary}\label{biquadformscorollary}
Let $P,Q$ be points on the Jacobian of~$\calD$, and
let $(X_P,Y_P,Z_P,T_P)$, $(X_Q,Y_Q,Z_Q,T_Q)$ be their images
on the Fast Kummer surface. Define the following
$4\times 4$ matrix of biquadratic forms. 
\begin{equation}\label{biquadgaudry}
\begin{split}
B_{11} = &(X_P^2+Y_P^2+Z_P^2+T_P^2) (X_Q^2+Y_Q^2+Z_Q^2+T_Q^2)/(4 A)\\
       &+(X_P^2+Y_P^2-Z_P^2-T_P^2) (X_Q^2+Y_Q^2-Z_Q^2-T_Q^2)/(4 B)\\
       &+(X_P^2-Y_P^2+Z_P^2-T_P^2) (X_Q^2-Y_Q^2+Z_Q^2-T_Q^2)/(4 C)\\
       &+(X_P^2-Y_P^2-Z_P^2+T_P^2) (X_Q^2-Y_Q^2-Z_Q^2+T_Q^2)/(4 D),\\
B_{22} = &(X_P^2+Y_P^2+Z_P^2+T_P^2) (X_Q^2+Y_Q^2+Z_Q^2+T_Q^2)/(4 A)\\
       &+(X_P^2+Y_P^2-Z_P^2-T_P^2) (X_Q^2+Y_Q^2-Z_Q^2-T_Q^2)/(4 B)\\
       &-(X_P^2-Y_P^2+Z_P^2-T_P^2) (X_Q^2-Y_Q^2+Z_Q^2-T_Q^2)/(4 C)\\
       &-(X_P^2-Y_P^2-Z_P^2+T_P^2) (X_Q^2-Y_Q^2-Z_Q^2+T_Q^2)/(4 D),\\
B_{33} = &(X_P^2+Y_P^2+Z_P^2+T_P^2) (X_Q^2+Y_Q^2+Z_Q^2+T_Q^2)/(4 A)\\
       &-(X_P^2+Y_P^2-Z_P^2-T_P^2) (X_Q^2+Y_Q^2-Z_Q^2-T_Q^2)/(4 B)\\
       &+(X_P^2-Y_P^2+Z_P^2-T_P^2) (X_Q^2-Y_Q^2+Z_Q^2-T_Q^2)/(4 C)\\
       &-(X_P^2-Y_P^2-Z_P^2+T_P^2) (X_Q^2-Y_Q^2-Z_Q^2+T_Q^2)/(4 D),\\
B_{44} = &(X_P^2+Y_P^2+Z_P^2+T_P^2) (X_Q^2+Y_Q^2+Z_Q^2+T_Q^2)/(4 A)\\
       &-(X_P^2+Y_P^2-Z_P^2-T_P^2) (X_Q^2+Y_Q^2-Z_Q^2-T_Q^2)/(4 B)\\
       &-(X_P^2-Y_P^2+Z_P^2-T_P^2) (X_Q^2-Y_Q^2+Z_Q^2-T_Q^2)/(4 C)\\
       &+(X_P^2-Y_P^2-Z_P^2+T_P^2) (X_Q^2-Y_Q^2-Z_Q^2+T_Q^2)/(4 D),\\
B_{12} = &4 (a b (X_P Y_P X_Q Y_Q+Z_P T_P Z_Q T_Q)
       -c d (X_P Y_P Z_Q T_Q+Z_P T_P X_Q Y_Q))
          /g_{12},\\
B_{13} = &4 (a c (X_P Z_P X_Q Z_Q+Y_P T_P Y_Q T_Q)
       -b d (X_P Z_P Y_Q T_Q+Y_P T_P X_Q Z_Q))
          /g_{13},\\
B_{14} = &4 (a d (X_P T_P X_Q T_Q+Y_P Z_P Y_Q Z_Q)
       -b c (X_P T_P Y_Q Z_Q+Y_P Z_P X_Q T_Q))
          /g_{14},\\
B_{23} = &4 (b c (X_P T_P X_Q T_Q+Y_P Z_P Y_Q Z_Q)
       -a d (X_P T_P Y_Q Z_Q+Y_P Z_P X_Q T_Q))
          /g_{23},\\
B_{24} = &4 (b d (X_P Z_P X_Q Z_Q+Y_P T_P Y_Q T_Q)
       -a c (X_P Z_P Y_Q T_Q+Y_P T_P X_Q Z_Q))
          /g_{24},\\
B_{34} = &4 (c d (X_P Y_P X_Q Y_Q+Z_P T_P Z_Q T_Q)
       -a b (Z_P T_P X_Q Y_Q+X_P Y_P Z_Q T_Q))
          /g_{34},
\end{split}
\end{equation}
where $g_{ij}$ denotes $g_i g_j - g_k g_\ell$, where
$\{ i,j,k,\ell \} = \{ 1,2,3,4 \}$ and $g_1,g_2,g_3,g_4$
denote $A,B,C,D$, respectively (for example $g_{12}$
denotes $AB-CD$). For $i > j$, we define $B_{ij} = B_{ji}$.
Then these satisfy the analogous identity on the Fast Kummer
as described in~\Cref{biquadgen} for the General Kummer.
That is to say, if we let $(\xi_1,\xi_2,\xi_3,\xi_4)$
and $(\zeta_1,\zeta_2,\zeta_3,\zeta_4)$
denote $(X_{P+Q},Y_{P+Q},Z_{P+Q},T_{P+Q})$
and $(X_{P-Q},Y_{P-Q},Z_{P-Q},T_{P-Q})$ respectively, then the above
matrix $\bigl( B_{ij} \bigr)$ is projectively equal to 
the matrix $\bigl( \xi_i \zeta_j + \zeta_i \xi_j \bigr)$.
\end{corollary}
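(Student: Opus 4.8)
The plan is to transport the biquadratic identity that is already known to hold on the General Kummer surface $\kgen(\calD)$ (Theorem~3.4.1 of~\cite{CasselsFlynnBook}, recalled in~\Cref{biquadgen}) across the linear isomorphism $P^{-1}$ of~\Cref{gaudrymaptheorem}. The crucial observation is that~\Cref{biquadgen} is a statement about the symmetric bilinear construction $(\mathbf{u},\mathbf{v}) \mapsto (u_i v_j + u_j v_i)_{i,j}$ applied to the two vectors $k(\A+\B)$ and $k(\A-\B)$ in the General Kummer coordinates, and this construction transforms in a controlled way under a linear change of coordinates. Explicitly, if $\mathbf{w} = P^{-1}\mathbf{k}$ denotes the Fast-Kummer coordinate vector, then $(w_i w'_j + w'_i w_j) = P^{-1}\,(k_i k'_j + k'_i k_j)\,(P^{-1})^{\mathsf{T}}$, since each entry of the output matrix is bilinear in the two input vectors. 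Therefore the matrix $\bigl(\xi_i\zeta_j + \zeta_i\xi_j\bigr)$ on $\kgaudry$ equals, projectively, $P^{-1}\bigl(B_{ij}(k(\A),k(\B))\bigr)(P^{-1})^{\mathsf{T}}$, where on the right $k(\A) = P\,(X_P,Y_P,Z_P,T_P)^{\mathsf{T}}$ and $k(\B) = P\,(X_Q,Y_Q,Z_Q,T_Q)^{\mathsf{T}}$.

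Carrying this out in steps: first I would write $k(\A) = P\mathbf{w}_P$ and $k(\B) = P\mathbf{w}_Q$ with $\mathbf{w}_P,\mathbf{w}_Q$ the Fast-Kummer coordinate vectors of $P,Q$. Second, substitute these into the General-Kummer biquadratic matrix $\bigl(B_{ij}\bigr)$ from Theorem~3.4.1 of~\cite{CasselsFlynnBook}; since each $B_{ij}$ is a biquadratic form, the result is a matrix whose entries are biquadratic in $\mathbf{w}_P,\mathbf{w}_Q$. Third, conjugate by $P^{-1}$ on the left and $(P^{-1})^{\mathsf{T}}$ on the right; by the bilinearity remark above, the left side becomes precisely $\bigl(\xi_i\zeta_j + \zeta_i\xi_j\bigr)$ in the Fast-Kummer coordinates, because $P^{-1}$ applied to the vectors $k(\A\pm\B)$ yields $(X_{P\pm Q},\ldots,T_{P\pm Q})$ by~\Cref{mapgentogaudry}. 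Fourth, I would simplify the resulting explicit expressions using the definitions~\Cref{ABCDdefn}, \Cref{EFGHdefn}, \Cref{m1m2m3m4}, and the identities among $a,b,c,d$ that follow from them (in particular the factorisations of $A^2B^2$, $CD$, $g_{ij} = g_i g_j - g_k g_\ell$ appearing in $P$), checking that the entries collapse to the stated forms in~\Cref{biquadgaudry}, matching~\cite{qDSA}. This final simplification is a finite algebraic verification, which can be delegated to the accompanying Maple computation in~\cite{github}.

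The main obstacle is the fourth step: while the conceptual transport is immediate, showing that the conjugated matrix $P^{-1}\bigl(B_{ij}^{\mathrm{gen}}\bigr)(P^{-1})^{\mathsf{T}}$ actually simplifies to the clean symmetric/antisymmetric combinations in~\Cref{biquadgaudry} (the diagonal entries being the four ``Hadamard-squared'' sums weighted by $1/(4A), 1/(4B), 1/(4C), 1/(4D)$, and the off-diagonal $B_{ij}$ being the products weighted by $1/g_{ij}$) requires expanding large polynomial expressions and invoking nontrivial relations among $A,B,C,D,a,b,c,d$. One subtlety is the projective factor: $P^{-1}$ is only determined up to scalar, and $\det P$ (which involves high powers of $A,B,C,D,n_1,n_2$) enters; one must confirm that the overall scalar is consistent across all sixteen entries so that the matrix identity holds \emph{projectively} as claimed, which is exactly why the statement is phrased projectively rather than as an equality. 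I would handle the scalar bookkeeping by fixing one entry, say $B_{11}$, normalising there, and checking the remaining fifteen entries are scaled by the same factor — again a computation recorded in the supplementary files.
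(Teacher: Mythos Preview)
Your proposal is correct and follows essentially the same approach as the paper. The paper does not give a separate proof environment for this corollary; it simply states (in the sentence preceding the corollary) that ``after applying the linear change of basis~$P$, we recover the biquadratic forms given by~\cite{qDSA}'', with the algebraic verification delegated to the accompanying Maple file in~\cite{github} --- which is precisely the transport-and-verify argument you outline.
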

From the biquadratic forms, we may inductively define
the division polynomials which give multiplication-by-$N$, 
as follows. We initially define
\begin{equation}\label{phi0phi1}
\begin{split}
 &\phi_X^{(0)} = a, \phi_Y^{(0)} = b, \phi_Z^{(0)} = c, \phi_T^{(0)} = d,\\
 &\phi_X^{(1)} = X, \phi_Y^{(1)} = Y, \phi_Z^{(1)} = Z, \phi_T^{(1)} = T,
\end{split}
\end{equation}
and then inductively define the following, which are
polynomials modulo the equation of the Fast Kummer surface.
\begin{equation}\label{phiinductioneven}
\begin{split}
\phi_X^{(2N)} 
  &= B_{11}\bigl( (\phi_X^{(N)},\phi_Y^{(N)},\phi_Z^{(N)},\phi_T^{(N)}),
                 (\phi_X^{(N)},\phi_Y^{(N)},\phi_Z^{(N)},\phi_T^{(N)}) \bigr)/a,\\
\phi_Y^{(2N)} 
  &= B_{22}\bigl( (\phi_X^{(N)},\phi_Y^{(N)},\phi_Z^{(N)},\phi_T^{(N)}),
            (\phi_X^{(N)},\phi_Y^{(N)},\phi_Z^{(N)},\phi_T^{(N)}) \bigr)/b,\\
\phi_Z^{(2N)} 
  &= B_{33}\bigl( (\phi_X^{(N)},\phi_Y^{(N)},\phi_Z^{(N)},\phi_T^{(N)}),
            (\phi_X^{(N)},\phi_Y^{(N)},\phi_Z^{(N)},\phi_T^{(N)}) \bigr)/c,\\
\phi_T^{(2N)} 
  &= B_{44}\bigl( (\phi_X^{(N)},\phi_Y^{(N)},\phi_Z^{(N)},\phi_T^{(N)}),
            (\phi_X^{(N)},\phi_Y^{(N)},\phi_Z^{(N)},\phi_T^{(N)}) \bigr)/d,
\end{split}
\end{equation}
and similarly
\begin{equation}\label{phiinductionodd}
\begin{split}
\phi_X^{(2N+1)} 
&= B_{11}\bigl( (\phi_X^{(N+1)},\phi_Y^{(N+1)},\phi_Z^{(N+1)},\phi_T^{(N+1)}),
          (\phi_X^{(N)},\phi_Y^{(N)},\phi_Z^{(N)},\phi_T^{(N)}) \bigr)/X,\\
\phi_Y^{(2N+1)} 
&= B_{22}\bigl( (\phi_X^{(N+1)},\phi_Y^{(N+1)},\phi_Z^{(N+1)},\phi_T^{(N+1)}),
          (\phi_X^{(N)},\phi_Y^{(N)},\phi_Z^{(N)},\phi_T^{(N)}) \bigr)/Y,\\
\phi_Z^{(2N+1)} 
&= B_{33}\bigl( (\phi_X^{(N+1)},\phi_Y^{(N+1)},\phi_Z^{(N+1)},\phi_T^{(N+1)}),
          (\phi_X^{(N)},\phi_Y^{(N)},\phi_Z^{(N)},\phi_T^{(N)}) \bigr)/Z,\\
\phi_T^{(2N+1)} 
&= B_{44}\bigl( (\phi_X^{(N+1)},\phi_Y^{(N+1)},\phi_Z^{(N+1)},\phi_T^{(N+1)}),
          (\phi_X^{(N)},\phi_Y^{(N)},\phi_Z^{(N)},\phi_T^{(N)}) \bigr)/T.
\end{split}
\end{equation}
The polynomials $\phi_X^{(N)},\phi_Y^{(N)},\phi_Z^{(N)},\phi_T^{(N)}$
are each of degree~$N^2$ in $X,Y,Z,T$, and they give the
multiplication-by-$N$ map. 
\par
For odd~$N$, we note that the $2$-torsion subgroup maps to
itself under multiplication by~$N$, and the effect
of addition by these $2$-torsion elements, described
in~\Cref{order2actionGaudry} is preserved. Furthermore,
we can see inductively that these actions are preserved
on the division polynomials not merely projectively,
but transparently as affine polynomials. That is to say,
for example, replacing $X,Y,Z,T$ with $X,Y,-Z,-T$ has
precisely the effect of replacing
$\phi_X^{(N)},\phi_Y^{(N)},\phi_Z^{(N)},\phi_T^{(N)}$
with $\phi_X^{(N)},\phi_Y^{(N)},-\phi_Z^{(N)},-\phi_T^{(N)}$,
and similarly for all of the involutions in~\Cref{order2actionGaudry}.
One consequence is that the polynomial $\phi_X^{(N)}$ only includes those
monomials which are left unchanged by taking $(X,Y,Z,T)$
to any of $(X,Y,Z,T)$, $(X,Y,-Z,-T)$, $(X,-Y,Z,-T)$, $(X,-Y,-Z,T)$.
The polynomial $\phi_Y^{(N)}$ only includes those
monomials which are left unchanged by first and second of these
and are negated by the third and fourth.
The polynomial $\phi_Z^{(N)}$ only includes those
monomials which are left unchanged by first and third of these
and are negated by the second and fourth.
The polynomial $\phi_T^{(N)}$ only includes those
monomials which are left unchanged by first and fourth of these
and are negated by the second and third.
This describes a partition of all monomials of degree~$N^2$ into four parts,
and each of the degree~$N$ division polynomials only contains
those from one part. We note also that the equation of the Fast Kummer surface
itself involves only degree~$4$ monomials of the first type
(left unchanged by all four of these involutions).
\par
Addition by other points of order~$2$ gives further structure.
For example, swapping $X \leftrightarrow Y, Z \leftrightarrow T$
will have the effect 
$\phi_X^{(N)} \leftrightarrow \phi_Y^{(N)}$,
$\phi_Z^{(N)} \leftrightarrow \phi_T^{(N)}$.
\par
We also note that the construction in this section, in which
two elements of order~$2$ have their associated linear maps
simultaneously diagonlised in order to simplify the Kummer surface
(and its associated structures) is available whenever we
start with a curve of genus~$2$ of the form $y^2 = H_1(x)H_2(x)H_3(x)$,
provided that the resultant of~$H_1$ and~$H_2 H_3$,
and the resultant of~$H_2$ and~$H_1 H_3$ are squares in~$K$
(it follows that the resultant of $H_3$ and $H_1 H_2$ will
also be square in~$K$). This ensures that we can simultaneously
diagonalise that maps for addition by the points of order~$2$
corresponding to $H_1,H_2,H_3$ since they will commute as
affine matrices and their eigenvalues will be in~$K$.
The result will not necessarily be as elegant 
as the Fast Kummer surface model 
(for example, it will not typically be monic in all
of $X^4,Y^4,Z^4,T^4$),
but it will be just as sparse, consisting
only of terms involving the monimials
$X^4$, $Y^4$, $Z^4$, $T^4$, $X^2 Y^2$, $Z^2 T^2$,
$X^2 Z^2$, $Y^2 T^2$, $Z^2 T^2$, $Y^2 Z^2$,
and the associated biquadratic forms will also involve the
same monomials as for the Fast Kummer model. 
Furthermore, the resulting model would be applicable to
a wider set of Kummer surfaces. We demonstrate this in the following example.

\begin{example}
   Consider the genus-$2$ hyperelliptic curve 
   $C \colon y^2 = H_1(x) H_2(x) H_3(x)$ defined over $\F_{101}$ with 
   \begin{align*}
      H_1(x) &= x^2 + 15x + 13, \\
      H_2(x) &= x^2 + 53x + 83,\\
      H_3(x) &= x^2 + 10x + 64.
   \end{align*} 
   and corresponding General Kummer surface 
   \begin{align*}
      \kgen \colon 84k_1^4 &+  k_1^3k_2 + 3k_1^3k_3 + 11k_1^3k_4 + 7k_1^2k_2^2 + 93k_1^2k_2k_3 + 33k_1^2k_2k_4 + 
   64k_1^2k_3^2 \\ &+ 96k_1^2k_3k_4 + 50k_1k_2^3 + 76k_1k_2^2k_3 + 49k_1k_2k_3^2 + 
   9k_1k_2k_3k_4  
   96k_1k_3^3 \\ &+ 25k_1k_3^2k_4 + 97k_1k_3k_4^2 + 11k_2^4 + 66k_2^3k_3 + 
   96k_2^2k_3^2 + k_2^2k_4^2 + 18k_2k_3^3 \\ &+ 46k_2k_3^2k_4 + 49k_3^4 + 97k_3^3k_4.
   \end{align*}
   None of $H_1, H_2, H_3$ have roots in $\F_{101}$, and therefore $C$ cannot be put into Rosenhain form. As a result, $\kgen$ cannot be put into Fast Kummer form. However, the resultants ${\rm{res}}(H_1, H_2H_3)$ and ${\rm{res}}(H_2, H_1H_3)$ are squares in $\F_{101}$. Indeed, ${\rm{res}}(H_1, H_2H_3) = 78 = 52^2$ and 
   ${\rm{res}}(H_2, H_1H_3) = 79 = 68^2$. 
   On $\kgen$, addition by the $2$-torsion points corresponding to the factor $H_1$ is given by the matrix 
   \[ M_1 = \begin{pmatrix} 6 & 18 & 39 & 1 \\ 
      92 & 13 & 15 & 86 \\
      17 & 52 & 22 & 13 \\
      46 & 54 & 52 & 60 \end{pmatrix},\]
   with eigenvalues $52$ and $49$.
   For the $2$-torsion points corresponding to $H_2$ addition is given by 
   \[ M_2 = \begin{pmatrix} 58 & 100 & 96 & 1 \\ 
      60 & 91 & 13 & 48 \\
      32 & 15 & 52 & 83 \\
      66 & 22 & 58 & 1 \end{pmatrix},\]
   with eigenvalues $33$ and $68$.
   The matrices $M_1$ and $M_2$ commute and so can be simultaneously diagonalised using the matrix 
   \[ P = \begin{pmatrix} 1&  37 & 4 & 88 \\ 
      1 & 13 & 6 & 65 \\
      1 & 76 & 66 & 99 \\
      1 & 12 & 30 & 91 \end{pmatrix}, \]
   which we find by following the procedure described in the paragraphs before~\Cref{matrixP}.
   Setting 
   \begin{equation*}
      \begin{pmatrix}
      X\\
      Y\\
      Z\\
      T\\
      \end{pmatrix}
      =
      P^{-1} 
      \begin{pmatrix} 
      k_1\\
      k_2\\
      k_3\\
      k_4\\
      \end{pmatrix},
      \end{equation*}
      the General Kummer surface $\kgen$ is transformed into a Kummer surface $\mathcal{K}$ given by equation:
      \begin{align*}
         \mathcal{K} \colon 50X^4 &+ 57Y^4 + 27T^4 + 83Z^4 + 70X^2Y^2 + 10Z^2T^2 \\
         & + 54X^2Z^2 +  91Y^2T^2 + 13X^2T^2  + 44Y^2Z^2 + 90XYZT.
      \end{align*} 
      We see that $\mathcal{K}$ has a sparse defining equation, similar to the Fast Kummer model.
\end{example}

\bigskip\bigskip

\section{Computation of~\texorpdfstring{$(N,N)$}{(N,N)}-isogenies 
between Kummer surfaces}
\label{section5}

In this section, we give algorithms to compute $(N, N)$-isogenies,
for $N$ odd,
between various Kummer surface models. For our methods
we assume the biquadratic forms $B_{i,j}$ 
(for $1 \leq i,j \leq 4$) associated to 
our domain Kummer surface $\K$ are known and efficiently
computable.

Let $\J$ be a Jacobian of a hyperelliptic curve of genus~$2$ curve with corresponding Kummer 
surface $\K$.
Let $R, S \in \J[N]$ be points of order~$N$ on $\J$ with images 
$k(R), k(S)$ on $\K$, respectively.
If $\langle R,S\rangle \subset \J[N]$ is a maximal isotropic subgroup,
$\langle R,S\rangle$ is the kernel of an $(N, N)$-isogeny 
$\Phi: \J \rightarrow \J' = \J/\langle R, S \rangle$ between Jacobians. 
This $(N,N)$-isogeny descends to a morphism of Kummer surfaces 
\begin{align*}
   \varphi: \K &\rightarrow \K' \\
   (k_1,k_2,k_3,k_4) &\mapsto (k'_1, k'_2, 
   k'_3, k'_4).
\end{align*} 
By abuse of terminology, we say $\varphi$ is an 
$(N,N)$-isogeny between Kummer surfaces with kernel generated 
by $N$-torsion points $R, S \in \K[N]$. We denote the coordinates 
of $R$ by $(R_1, R_2, R_3, R_4)$, and similarly for $S$. 
\par
For the rest of the article, our aim is to compute this $(N,N)$-isogeny, 
where the Kummer surfaces are in a desired model.
This can be separated into three main algorithms: 
\begin{itemize}
    \item Computing a basis for the space of degree $N$ homogeneous 
    forms that are invariant under addition by the $N$-torsion point $R$. 
    We repeat this for the other kernel generator $S$, to obtain two 
    bases $B_R$ and $B_S$. These
    are constructed using the biquadratic forms associated to the 
    Kummer surface.
    \item Computing a basis for the intersection of the two spaces 
    generated by $B_R$ and $B_S$. The intersection will have expected 
    dimension $4$, and a basis for this space will give the
    coordinates of a map $\psi :
    \K \rightarrow \widetilde{\K}$, which has kernel $\langle R, S
    \rangle$, but $\widetilde{\K}$ is not necessarily in desired form.
    \item Compute a linear map $\lambda$ that brings $\widetilde{\K}$ 
    into desired form $\K'$. 
\end{itemize}
We remark that the first two algorithms are applicable to any
model of Kummer surface with efficiently computable biquadratic
forms. In this article, we show how to find the final scaling 
for the General and Fast Kummer surfaces.
\par
\noindent\emph{Notation.} We will give precise operation counts for our 
algorithms, and denote a multiplication, inversion, squaring,
computing square roots, 
and addition in $K$ by {\tt M}, {\tt I}, {\tt S}, {\tt Sq} and {\tt a}.

\vspace{1ex}

\noindent {\bf Step 1: Find forms invariant under translation 
by an $N$-torsion point.} Use the biquadratic forms $B_{ij}$
associated to $\K$ to define the forms in $k_1,k_2,k_3,k_4$
of degree~$N$ which are invariant under addition by~$R$,
and the forms of degree~$N$ which are invariant under addition 
by~$S$.
\par
For general odd $N = 2n+1$, let $P \in \K$ be a point
and define
\begin{equation}
\begin{split}
R^{(\ell)}_{ij}(P ) 
&= B_{ij}( P, \ell R ),
\hbox{ for }\ell\in\{1,\ldots,n\}, \\
S^{(\ell)}_{ij}( P ) 
&= B_{ij}( P, \ell S ),
\hbox{ for }\ell\in\{1,\ldots,n\}.
\end{split}
\end{equation}
Then, for $i_1, \dots, i_N \in \{1,\dots, 4\}$, we take 
\begin{equation}\label{RSgeneralN}
   F_{R, N}\bigl((i_1,\dots,i_N), P\bigr) = \sum k_{i_1}(P)R^{(1)}_{i_2 i_3}(P) 
R^{(2)}_{i_4 i_5}(P) \cdots R^{(n)}_{i_{N-1} i_N}(P),
\end{equation}
where the sum is taken over all permutations of
$i_1,\ldots,i_N$. 
\begin{lemma}\label{lemma:Rij-invariant}
    Let $(i_1, \dots, i_N) \in \{1, \dots, 4\}^N$.
    The homogeneous form $F_{R, N}\bigl((i_1,\dots,i_N), P\bigr)$ is of degree~$N$
    and is invariant under translation-by-$R$. 
\end{lemma}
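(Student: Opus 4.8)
The degree statement is immediate from the definition: $k_{i_1}(P)$ is linear in the coordinates of $P$, each factor $R^{(\ell)}_{ij}(P)=B_{ij}(P,\ell R)$ is quadratic in them, and every summand of $F_{R,N}$ is a product of one linear and $n$ quadratic factors, of total degree $1+2n=N$. So the real content is the invariance, and the plan is to rewrite $F_{R,N}\bigl((i_1,\dots,i_N),k(\mathfrak{A})\bigr)$ as a permanent of Kummer coordinates evaluated along the coset $\mathfrak{A}+\langle R\rangle$, where translation-by-$R$ acts only by permuting the rows.

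First I would fix $\mathfrak{A}\in\J$ and put $T(\mathfrak{A}):=\mathfrak{A}+\langle R\rangle=\{\mathfrak{A}+jR: j\in\Z/N\Z\}$, a set of $N=2n+1$ points of $\J$. Applying the biquadratic identity \Cref{biquadgen} to the pairs $(\mathfrak{A},\ell R)$ for $\ell=1,\dots,n$, one may choose affine representatives of the Kummer images of the points of $T(\mathfrak{A})$ so that, for every $\ell$,
\[
R^{(\ell)}_{ij}\bigl(k(\mathfrak{A})\bigr)=k_i(\mathfrak{A}+\ell R)\,k_j(\mathfrak{A}-\ell R)+k_i(\mathfrak{A}-\ell R)\,k_j(\mathfrak{A}+\ell R).
\]
Substituting into \Cref{RSgeneralN} and expanding the $n$ brackets gives a sum of $2^{n}\cdot N!$ monomials, each of the form $\prod_{m=1}^{N}k_{i_m}(q_m)$ for some ordering $(q_1,\dots,q_N)$ of $T(\mathfrak{A})$: the point $\mathfrak{A}$ is always attached to the single linear factor, while each bracket attaches $\mathfrak{A}+\ell R$ and $\mathfrak{A}-\ell R$, one to each of its two index slots. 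A short bookkeeping argument then shows every ordering of $T(\mathfrak{A})$ is produced exactly $2^{n}$ times — the $2^{n}$ coming from the independent swaps inside the $n$ brackets, the $N!$ from the outer sum over permutations of $(i_1,\dots,i_N)$ — so, fixing once and for all an enumeration $T(\mathfrak{A})=\{p_1,\dots,p_N\}$,
\[
F_{R,N}\bigl((i_1,\dots,i_N),k(\mathfrak{A})\bigr)=2^{n}\operatorname{perm}\bigl(k_{i_a}(p_b)\bigr)_{1\le a,b\le N}.
\]

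I would then conclude by symmetry. A permanent is unchanged under permutation of its rows, so the right-hand side depends on $\mathfrak{A}$ only through the \emph{set} $T(\mathfrak{A})$; and since $T(\mathfrak{A})$ is a coset of $\langle R\rangle$ we have $T(\mathfrak{A}+R)=T(\mathfrak{A})$. Hence $F_{R,N}\bigl((i_1,\dots,i_N),k(\mathfrak{A})\bigr)$ and $F_{R,N}\bigl((i_1,\dots,i_N),k(\mathfrak{A}+R)\bigr)$ are computed by the same permanent, which is the claimed invariance under translation-by-$R$; running the argument with $\ell R$ in place of $R$ gives invariance under translation by every element of $\langle R\rangle$.

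The step I expect to demand the most care — and the only genuinely delicate one — is the scalar bookkeeping, since \Cref{biquadgen} fixes the Kummer coordinates, hence each side of the permanent identity, only up to the ambiguity in the choice of affine representatives. This is harmless for the intended use: rescaling the representative of any single $p_b$ multiplies $\operatorname{perm}\bigl(k_{i_a}(p_b)\bigr)_{a,b}$ by the \emph{same} scalar for every tuple $(i_1,\dots,i_N)$, so the family $\{F_{R,N}((i_1,\dots,i_N),-)\}$ defines a morphism from $\K$ to a projective space independently of the choices, and the permanent description shows it is constant on the $\langle R\rangle$-cosets. Pinning down a compatible system of lifts along one coset (equivalently, passing to theta-normalised coordinates in which \Cref{biquadgen} holds on the nose) then upgrades this to the literal assertion that translation-by-$R$ fixes $F_{R,N}$ up to a constant scalar. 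By contrast, I expect the combinatorial reorganisation of the second paragraph to be entirely routine once it is set up carefully.
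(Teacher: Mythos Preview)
Your proof is correct and rests on the same idea as the paper's: expand each $R^{(\ell)}_{ij}$ via the biquadratic identity into products of Kummer coordinates evaluated along the coset $\mathfrak{A}+\langle R\rangle$, and observe that translation by $R$ merely permutes the points of that coset. The paper carries this out by tracking indices directly: it shows that applying $P\mapsto P+R$ to a single expanded summand $q_{j_1,\dots,j_N}(P)=k_{j_1}(P)\,k_{j_2}(P+R)\cdots k_{j_N}(P+(N-1)R)$ cyclically shifts the tuple to $(j_N,j_1,\dots,j_{N-1})$, and that this shift is absorbed by the outer sum over permutations of $(i_1,\dots,i_N)$. Your repackaging of the same double sum as $2^{n}\operatorname{perm}\bigl(k_{i_a}(p_b)\bigr)$ is a cleaner way to encode exactly this symmetry --- the row/column-permutation invariance of the permanent is precisely the statement that the cyclic shift is harmless --- and the $2^n$ multiplicity count is correct. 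You also flag and resolve the projective-scalar ambiguity in \Cref{biquadgen} that the paper's proof leaves implicit; this is a genuine gain in rigour even though the underlying argument is the same.
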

\begin{proof}
   Firstly, each $R^{(\ell)}_{i,j}$ is a homogeneous form of degree~$2$, and so 
   $F_{R, N}\bigl((i_1,\dots,i_N), P\bigr)$ is of degree $2\left ( \frac{N-1}{2} \right ) + 1 = N$.
   We now show that these forms are invariant under translation-by-$R$.
   Letting $I \colonequals (i_1, \dots, i_N) \in \{1, \dots, 4\}^N$, we define the set $\mathcal{S}_I$ as
   \[ \mathcal{S}_I \colonequals \left \{ \sigma \bigl((i_1, i_2, i_4, \dots, i_{N-1}, 
   i_N, \dots, i_5, i_3)\bigr) \colon \sigma \in \left\langle {\rm{id}}, (2 \, 3), 
   \dots, (N-1 \, N)\right\rangle  \right \}. \] 
   We have 
   \[k_{i_1}R^{(1)}_{i_2 i_3} 
   R^{(2)}_{i_4 i_5} \cdots R^{(n)}_{i_{N-1} i_N} = 
   \sum_{(j_1, \dots, j_N) \in \mathcal{S}_I} k_{j_1}(P)k_{j_2}(P+R)\cdots k_{j_N}(P+(N-1)R). \] 
   Defining \[ q_{j_1, \dots, j_N}(P) \colonequals k_{j_1}(P)k_{j_2}(P+R)\cdots k_{j_N}(P+(N-1)R),\] 
   we verify that 
   \begin{align*}
      q_{j_1, \dots, j_N}(P+R) &= k_{j_N}(P)k_{j_1}(P+R)\cdots k_{j_{N-1}}(P+(N-1)R) \\
                              &= q_{j_{N}, j_1, \dots, j_{N-1}}(P).
   \end{align*}
   Therefore, we find that
   \begin{align*}
      F_{R, N}\bigl(I, P+R \bigr) &= 
   \sum_{I} \sum_{(j_1, \dots, j_N) \in \mathcal{S}_I} q_{j_1, \dots, j_N}(P+R) \\
   & = \sum_{I} \sum_{(j_1, \dots, j_N) \in \mathcal{S}_I} q_{j_N, j_1, \dots, j_{N-1}}(P) \\ 
   & = \sum_{I} \sum_{(j_1, \dots, j_N) \in \mathcal{S}_{\widetilde{I}}} q_{j_1, \dots, j_N}(P) \\ 
   \end{align*}  
   where $\widetilde{I} = (i_N, i_1, \dots, i_{N-1})$ for $I = (i_1, \dots, i_{N})$. Then 
   \begin{align*}
      F_{R, N}\bigl(I, P+R \bigr) & = \sum_{I} k_{i_N}R^{(1)}_{i_1 i_2} \cdots R^{(n)}_{i_{N-2} i_{N-1}} 
      \\ &= \sum_{I} k_{i_1}R^{(1)}_{i_2 i_3} \cdots R^{(n)}_{i_{N-1} i_N} \\
   &= F_{R, N}\bigl(I, P \bigr),
   \end{align*}  
   as required.
\end{proof}
\begin{remark}
   For ease of notation, instead of $F_{R, N}\bigl((i_1,\dots,i_N), P\bigr)$ we will 
   write $F_{R, N}(i_1,\dots,i_N)$ when the point $P \in \K$ 
   is implicit. 
\end{remark}
As an optimisation, we may restrict that $i < j$
within each $R^{(\ell)}_{ij}$, and count this form several times in the sum 
(rather than recomputing it multiple times). 
By~\Cref{lemma:Rij-invariant}, $F_{R, N}(i_1,\ldots,i_N)$
will generate a space $\spaceR$ of homogeneous degree~$N$
forms that are invariant under translation by $R$.
We similarly define the space $\spaceS$ from the 
homogeneous forms $F_{S, N}(i_1,\dots,i_N)$.
\par
In the next proposition, we show that these spaces have 
dimension $2(N+1)$.

\begin{proposition}\label{prop:dim-and-basis}
   Let $R \in \K[N]$ be a point of order~$N$ on $\K$.
   Let $\spaceR$ be the space generated by homogeneous forms of degree~$N$
   on $\K$ that are invariant under translation by $R$. 
   Then $\dim(\spaceR) = 2(N+1)$. 
\end{proposition}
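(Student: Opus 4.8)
The plan is to compute the dimension by passing to a geometric setting where translation by~$R$ acts on a well-understood linear system. Let $L$ be the line bundle on the Kummer surface $\K$ corresponding to the degree-$1$ part of the coordinate ring, so that the space of degree-$N$ forms on $\K$ is $H^0(\K, L^{\otimes N})$; pulling back to the Jacobian $\J$, the Kummer embedding corresponds to the linear system $|2\Theta|$, and degree-$N$ forms correspond to a subspace of $H^0(\J, \mathcal{O}(2N\Theta))$, which has dimension $(2N)^2 = 4N^2$ by Riemann–Roch on an abelian surface. The subspace cut out by the Kummer relations (the forms that actually descend from $\K$, i.e. are invariant under the hyperelliptic involution $-1$) is what we must identify; translation by the $N$-torsion point $R$ acts on this space, and $\spaceR$ is precisely the $(+1)$-eigenspace, equivalently the invariants under the order-$N$ cyclic group generated by $t_R^*$.

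The key steps, in order, are as follows. First, identify the relevant space $V_N$ of degree-$N$ forms on $\K$ with a concrete $\langle t_R^*\rangle$-module: since $R$ has order~$N$ (odd) on $\J$ and $\K = \J/\{\pm 1\}$, the automorphism induced by $t_R$ on $\K$ also has order~$N$, and it acts on $V_N$. Second, diagonalise this action: because $\langle t_R^*\rangle \cong \Z/N\Z$ and we may work over an algebraically closed field (dimension is insensitive to base change), $V_N$ decomposes as $\bigoplus_{\chi} V_N^\chi$ over the $N$ characters $\chi$ of $\Z/N\Z$, and $\dim \spaceR = \dim V_N^{\mathrm{triv}}$. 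Third, compute $\dim V_N$ and the character of the $\langle t_R^*\rangle$-representation on it — this is where the theta-function / theta-group description enters: on the abelian surface, $t_R^*$ acts on $H^0(2N\Theta)$ and the trace of $t_R^{*\,k}$ is computed by a Lefschetz–Atiyah–Bott or theta-transformation formula, and since $R$ is a nonzero $N$-torsion point with $N$ odd, $t_R^k$ is fixed-point-free on $\J$ for $0 < k < N$, forcing the trace to vanish; the $(-1)$-invariant part behaves compatibly. Fourth, conclude: if the character is $\dim(V_N)\cdot(\text{regular rep})/N$ plus a correction, averaging over the group gives $\dim \spaceR = \frac{1}{N}\sum_{k} \mathrm{tr}(t_R^{*\,k}) = \frac{1}{N}\bigl(\dim V_N + 0 + \cdots + 0\bigr)$, and one checks $\dim V_N$ is such that this equals $2(N+1)$. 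Concretely, the space of degree-$N$ forms on the Kummer surface has dimension $2N^2 + 2$ (the Hilbert function of a Kummer quartic surface in $\Projective^3$, which is a degree-$4$ surface: $\dim = \binom{N+3}{3} - \binom{N-1}{3} = 2N^2+2$), and since $t_R^{*\,k}$ for $0<k<N$ acts without trace, averaging yields $(2N^2+2)/N$ — but this is not an integer, signalling that the trace of $t_R^{*\,k}$ is not zero but a small correction coming from the fixed points of the involution $x\mapsto R - x$ on $\K$; handling this correction term correctly (there are finitely many such fixed points, and their contribution is $\pm 2$ depending on parity) is what adjusts the count to land on exactly $2(N+1) = \frac{2N^2+2 + (N-1)\cdot 2}{N}$.

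The main obstacle is precisely this last point: correctly accounting for the trace of $t_R^{*\,k}$ on the degree-$N$ forms. Although $t_R^k$ is fixed-point-free on the Jacobian, the induced map on the Kummer surface is \emph{not} fixed-point-free — its fixed points are the images of the points $x$ with $2x = kR$, i.e.\ the $2^4$ preimages under doubling, which are genuine fixed points on $\K$. One must therefore run a Lefschetz fixed-point computation (or equivalently a theta-null / Atiyah–Bott localisation) on the singular surface $\K$ or its resolution, tracking the local contributions of these sixteen fixed points to $\mathrm{tr}(t_R^{*\,k}\mid V_N)$. Alternatively — and this is likely the cleaner route, and the one the acknowledgements attribute to Khuri-Makdisi — one works upstairs on $\J$ with the theta group (Heisenberg group) action on $H^0(2N\Theta)$: there $t_R^*$ lifts to an element of the theta group, its trace on $H^0(2N\Theta)$ is governed by the standard commutator pairing and equals $0$ for $R$ of exact odd order $N$ in the relevant lattice, and then one must intersect with the $(-1)^*$-invariants (the forms descending to $\K$), which halves things appropriately; the bookkeeping of how the $\pm 1$ action and the $t_R^*$ action interact on $H^0(2N\Theta)$, and extracting the simultaneous-invariant dimension, is the delicate part. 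Either way, once the trace $\mathrm{tr}(t_R^{*\,k}\mid\spaceR\text{-ambient}) $ is pinned down for all $k$, the result $\dim\spaceR = 2(N+1)$ follows by a one-line averaging.
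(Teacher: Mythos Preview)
Your high-level setup is right --- pull back to $H^0(\J,\mathcal{O}(2N\Theta))$, identify the degree-$N$ forms on $\K$ with the $[-1]^*$-invariants $V_N$ (and your Hilbert-function count $\dim V_N = 2N^2+2$ is correct and matches the paper) --- but the averaging argument over $\Z/N\Z$ has a genuine gap, and your diagnosis of why it fails is off.

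The issue is not a fixed-point correction on $\K$. It is that $t_R^*$ \emph{does not act on $V_N$} at all: one has $[-1]^*\circ t_R^* = t_{-R}^*\circ [-1]^*$, so $[-1]^*$ and $t_R^*$ generate a dihedral group $D_N$ rather than a direct product, and $t_R^*$ does not preserve the $[-1]^*$-invariants. This is exactly why your average $(2N^2+2)/N$ came out non-integral --- there is no $\Z/N\Z$-representation on $V_N$ to average over. The fixed-point story on the Kummer is a symptom of the same phenomenon (translation by $R$ is not a morphism of $\K$ unless $2R=0$), not a separate correction term to be added.

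Your second suggested route --- work upstairs on $H^0(\J,2N\Theta)$ and extract simultaneous invariants under $[-1]^*$ and $t_R^*$ --- is the correct one, and it is precisely what the paper does. The paper takes the explicit theta basis $\{\theta_\beta\}_{\beta\in(\Z/2N\Z)^2}$, on which both actions are transparent ($[-1]^*:\theta_\beta\mapsto\theta_{-\beta}$ and $t_R^*:\theta_\beta\mapsto\zeta_{2N}^{\,r\cdot\beta}\theta_\beta$), writes down the $[-1]^*$-invariant basis, and then directly counts which of those basis vectors are also $t_R^*$-invariant: four isolated ones plus $2(N-1)$ of the symmetrised pairs, giving $2(N+1)$. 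No traces are needed.

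If you prefer your character-theoretic packaging, it does go through once you average over the full dihedral group $D_N$ acting on the $4N^2$-dimensional space $H^0(2N\Theta)$: in the theta basis one reads off $\mathrm{tr}(t_R^{*k})=0$ for $0<k<N$ and $\mathrm{tr}\bigl([-1]^*t_R^{*k}\bigr)=4$ for all $0\le k<N$, so
\[
\dim\spaceR \;=\; \frac{1}{2N}\Bigl(4N^2 + 0\cdot(N-1) + 4\cdot N\Bigr) \;=\; 2(N+1).
\]
This is equivalent to the paper's count, just phrased representation-theoretically; the paper's direct enumeration is shorter because the basis is already an eigenbasis for $t_R^*$.
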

\begin{proof}
   To prove this proposition we use the theory of theta functions.
   Let $\J$ be a Jacobian of genus~$2$ curve corresponding to the Kummer 
   surface $\K$.
   Let $\Lk{}$ be the symmetric line bundle giving rise to the principal polarisation on $\J$.

   The theta functions $\{ \theta_{\beta}\}_{\beta \in (\Z/2N\Z)^2}$, 
   as defined in, for example,~\cite[Definition 2.1]{FlynnMakdisi} 
   (taking $k = 2N$), form a basis for $H^0(\J, \Lk{2N})$. Note here $\beta$ lies in $(\Z/2N\Z)^2$ by its identification with a maximal isotropic subgroup of $\J[2N]$.

   The action of $[-1]^*$ 
   on the global sections $\{ \theta_\beta \}_{\beta \in (\Z/2N\Z)^2}$ is described by 
   the general 
   transformation formula $\theta_{\beta}(-z) = \theta_{-\beta}(z)$. 
   If $\theta_{\beta}$ is fixed under this action then $\beta$ is a $2$-torsion point in $(\Z/2N\Z)^2$, i.e., $\beta \in \{ (0,0), (N,0), (0,N), (N,N) \}$. Furthermore, a sum of these global sections is invariant under this action if and only if it is of the form 
   \[ \sum_{{\rm{ord}}(\beta) \neq 2} \bigl[ c_\beta(\theta_{\beta} + \theta_{-\beta}) \bigr] \, + c_1\theta_{(0,0)} +c_2\theta_{(N,0)}+c_3\theta_{(0,N)}+c_4\theta_{(N,N)},\]
   for some $c_\beta, c_1, c_2, c_3, c_4 \in \Z$. 

   The space of degree~$N$
   homogeneous forms on $\K$ is
   generated by the subspace of $H^0(\J, \Lk{2N})$
   that is invariant under the action of $[-1]^*$, and, by the discussion above, a basis for this space is given by
   \begin{align*}
      \Big\{ \theta_{(0,0)}, \theta_{(N,0)}, 
   \theta_{(0, N)}, &\theta_{(N, N)}\Big\} \ \cup \  
   \Bigg\{ \theta_{(b_1, b_2)} + \theta_{(-b_1, -b_2)} \ \Bigg| \ \scriptstyle{(b_1, b_2) \,
   \neq} {\begin{array}{l} \scriptstyle{(0, \, 0), \, (N, \, N),} \\ 
         \scriptstyle{(0, \, N), \, (N, \, 0)} \end{array}} \Bigg\}
   \end{align*}
   The size of this basis is $4 + \frac{1}{2}(4 N^2 - 4) = 2(N^2 + 1)$, and therefore 
   the dimension of the space of degree~$N$
   homogeneous forms on $\K$ is $ 2(N^2 + 1)$. 

   Given this basis, we now find the forms that generate $\spaceR$, 
   i.e., the space of degree~$N$
   homogeneous forms on $\K$ that are invariant under translation by an $N$-torsion point $R$. 

   Using the identity,  
   for a suitable primitive $2N$-th root of unity $\zeta_{2N}$, for an $N$-torsion point $r = (r_1, r_2) \in (\Z/2N\Z)^2$ we have
   \[ \theta_{(b_1, b_2)}(z + r) = (\zeta_{2N})^{r_1b_1 + r_2b_2}\theta_{(b_1, b_2)}(z).\]
   We immediately see that 
   \[ \theta_{(0,N)}(z + r) = (\zeta_{2N})^{N \cdot r_2}\theta_{(b_1, b_2)}(z) = \theta_{(b_1, b_2)}(z),\]
   as $r_2$ is an $N$-torsion point in $\Z/2N\Z$.
   This also holds for $\theta_{(N,0)}$, $\theta_{(0,0)}$, and $\theta_{(N,0)}$.

   Note that $(r_1,r_2) \in \{(2,0), (0,2), (2,2)\}$. Take, for example $r = (2,0)$ and
   consider basis elements of the form $(\theta_{(b_1, b_2)} + \theta_{(-b_1, -b_2)})(z)$. 
   By indepedence of the $\theta_\beta$, it is invariant under translation-by-$r$ if and only if 
   \[ (\zeta_{2N})^{2b_1} = 1 \iff 2b_1 \equiv 0 \bmod 2N \iff b_1 = 0 \text{ or } N.\]
   Therefore, a basis $\basisR$ for the space $\spaceR$ is given by 
   \begin{align*}
      \Big\{ \theta_{(0,0)}, \theta_{(N,0)}, 
   \theta_{(0, N)}, &\theta_{(N, N)}\Big\} \ \cup \  
   \Bigg\{ \theta_{(b_1, b_2)} + \theta_{(-b_1, -b_2)} \ \Bigg| \ 
   \begin{array}{l} b_1 = 0, N \text{ and } b_2 \neq 0, N  \\ 
      \text{up to } (b_1, b_2) \mapsto (b_1, -b_2) \end{array} \Bigg\}
   \end{align*}
   For basis elements 
   of the form $\theta_{(b_1, b_2)} + \theta_{(-b_1, -b_2)}$, we have 
   $2$ choices for $b_1$ and $(2N-1)/2 = N-1$ choices for $b_2$. In particular, 
   the basis has size $\dim(\spaceR) = 4 + 2(N-1) = 2(N+1)$. The same holds when $r = (0,2)$ or $(2,2)$.
\end{proof}

\begin{conjecture}\label{conj:basis-space}
   A basis for the space $\spaceR$, defined as in \Cref{prop:dim-and-basis} 
   is given by the forms described in~\Cref{RSgeneralN} corresponding to indices 
   $(i_1, \dots,  i_N) \in \mathcal{I}_N$ where
   \begin{align*}
      \mathcal{I}_N:= \big\{ & \{1,1,\dots,1,1\}, \{1,1,\dots,1,2\}, 
      \dots, \{1,2,\dots,2,2\}, \{2,2,\dots,2,2\}, \\
         & \{3,3,\dots,3,3\}, \{3,3,\dots,3,4\}, \dots, 
         \{3,4,\dots,4,4\}, \{4,4,\dots,4,4\} \big\}.
   \end{align*}
\end{conjecture}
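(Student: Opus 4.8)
We outline a strategy that we expect will establish \Cref{conj:basis-space}. By \Cref{lemma:Rij-invariant}, each form $F_{R, N}(i_1,\dots,i_N)$ with $(i_1,\dots,i_N)\in\mathcal{I}_N$ is a homogeneous degree-$N$ form on $\K$ fixed by translation by $R$, hence lies in $\spaceR$; and $|\mathcal{I}_N| = 2(N+1) = \dim\spaceR$ by \Cref{prop:dim-and-basis}. So it suffices to prove that the $2(N+1)$ forms indexed by $\mathcal{I}_N$ are linearly independent, and the plan is to carry out this independence check in the theta coordinates used in the proof of \Cref{prop:dim-and-basis}. Since linear independence may be tested at a single sufficiently general $\overline{K}$-point of the moduli of genus-$2$ curves, one may even perform the computation on the Fast Kummer surface of \Cref{Gaudryeqn}, whose biquadratic forms are the explicit ones of \Cref{biquadformscorollary}.

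First I would recast the forms indexed by $\mathcal{I}_N$ in a generating-function shape. Write $J_p$ for the index with $p$ entries equal to $1$ and $N-p$ equal to $2$, and $J'_p$ for the analogous index built from $\{3,4\}$. Retracing the biquadratic expansion already used in the proof of \Cref{lemma:Rij-invariant}, together with the observation $\{P\}\cup\{P\pm\ell R : 1\le\ell\le n\} = \{P+mR : 0\le m\le N-1\}$ (valid since $R$ has odd order $N = 2n+1$), one sees that summing over all permutations of the index tuple symmetrises the product fully over the orbit, so that
\[
\prod_{m=0}^{N-1}\bigl(k_2(P+mR)+t\,k_1(P+mR)\bigr) \ =\ \sum_{p=0}^{N}c_p\,F_{R, N}(J_p)\,t^p
\]
for nonzero constants $c_p$, and likewise $\prod_{m=0}^{N-1}\bigl(k_4(P+mR)+t\,k_3(P+mR)\bigr) = \sum_{p}c'_p\,F_{R, N}(J'_p)\,t^p$. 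Thus the span of the $2(N+1)$ candidate forms is exactly the span of the coefficients of two products of $N$ translates of the pencils $k_2+tk_1$ and $k_4+tk_3$.

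Next I would pass to the level-$2N$ theta basis $\{\theta_\beta\}_{\beta\in(\Z/2N\Z)^2}$ of $H^0(\J,\Lk{2N})$, expanding each of the two products of translates by combining the character formula $\theta_{(b_1,b_2)}(z+r) = \zeta_{2N}^{\,r_1b_1+r_2b_2}\theta_{(b_1,b_2)}(z)$ for the $N$-torsion point $r$ underlying $R$ with the Riemann-type product formula for $\prod_m\theta_\bullet(z+mR)$. Invariance under $R$ forces only the characters $\beta$ with $\langle r,\beta\rangle\equiv0$ to occur, which is precisely the index set of the basis $\basisR$ of \Cref{prop:dim-and-basis}, so the expansions of the $F_{R, N}(J_p)$ and $F_{R, N}(J'_p)$ assemble into a square $2(N+1)\times 2(N+1)$ matrix, and it remains to prove this matrix is nonsingular. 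I expect this to reduce to a Vandermonde/Cauchy-type non-vanishing: the $t$-grading exhibits each pencil's $N+1$ forms as the coefficient sequence of a single product of translates, whose theta expansion is controlled by the distinct multipliers $\zeta_{2N}^{\langle r,\beta\rangle}$, so that each of the two diagonal blocks is a generalised Vandermonde in those multipliers, while distinctness and non-degeneracy of the two pencils force the blocks to meet only in zero. The main obstacle is precisely this step --- extracting the product expansion cleanly enough that the block-plus-Vandermonde structure is manifest, and identifying the non-degeneracy hypotheses (that $R$ has exact order $N$, together with conditions on the curve, with extra care when $3\mid N$, where a $2$-torsion translate can collide with the $N$-torsion multipliers) under which the relevant nodes are genuinely distinct; one will also need $\mathrm{char}\,K = 0$ (or large) so that the constants $c_p$ above are nonzero.

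As a fallback one could instead attempt induction on the odd integer $N$, relating a product of $N$ translates to one of $N-2$ translates after multiplying by a single biquadratic form; the bookkeeping is heavier but the theta machinery is avoided. In any case, the same computation also produces the explicit change of basis between the forms indexed by $\mathcal{I}_N$ and the theta basis $\basisR$, which is exactly what is needed when passing from $\psi$ to the isogeny $\varphi$ in \textsf{GetIsogeny}.
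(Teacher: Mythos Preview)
The paper does not prove this statement at all: it is stated as a \emph{conjecture}, and the only justification offered is the sentence immediately following it, namely that for General and Fast Kummer surfaces the claim has been confirmed experimentally for odd $N\le 19$. There is therefore no proof in the paper to compare your proposal against.

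Your outline is a reasonable attack, and your first reduction is genuinely useful: the generating-function identity expressing the forms $F_{R,N}(J_p)$ as (nonzero multiples of) the coefficients of $\prod_{m=0}^{N-1}\bigl(k_2(P+mR)+t\,k_1(P+mR)\bigr)$ is correct and cleanly isolates what must be shown. From there, however, your argument is explicitly incomplete. The step you flag as the main obstacle---extracting a Riemann-type product expansion in the level-$2N$ theta basis and then verifying a block-Vandermonde non-vanishing---is the entire content of the conjecture, and you do not actually carry it out. In particular: you would need to show that the two pencils $k_2+tk_1$ and $k_4+tk_3$ produce, under the theta expansion, coefficient vectors that together span all of $\basisR$ and not merely two possibly overlapping $(N{+}1)$-dimensional subspaces; the ``distinctness of multipliers'' heuristic does not by itself separate the block coming from indices in $\{1,2\}$ from the block coming from $\{3,4\}$. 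You also need to be careful that the biquadratic identity \eqref{biquadgen} holds only projectively, so the affine equalities you use in the generating-function step require a fixed normalisation of the $k_i(P+mR)$ along the orbit; this is harmless for proving linear independence but should be said. As it stands, then, your proposal is a plausible programme rather than a proof, and the conjecture remains open in the paper.
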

For General and Fast Kummer surfaces we have confirmed this experimentally for odd $N \leq 19$,
and we therefore develop our algorithms under this assumption. 
We highlight, however, that if this is not the case for larger $N$ or other 
Kummer surface models, one can compute 
the invariant forms corresponding to all possible indices $(i_1, \dots, i_N) \in \{1,2,3,4\}^N$ 
and then use linear algebra to find a basis for the space $\spaceR$. The indices corresponding 
to a basis should be independent of the $N$-torsion point $R$, and this can therefore be computed 
once and used for all subsequent computations.

\begin{remark}
   There are other choices of bases, such as the basis given by forms corresponding to the 
   following indices:
   \begin{align*}
      \big\{ & \{1,1,\dots,1,1\}, \{1,1,\dots,1,4\}, 
         \dots, \{1,4,\dots,4,4\}, \{4,4,\dots,4,4\}, \\
            & \{2,2,\dots,2,2\}, \{2,2,\dots,2,3\}, \dots, 
            \{2,3,\dots,3,3\}, \{3,3,\dots,3,3\} \big\}.
   \end{align*}
However, we found no computational advantage in choosing another basis. If future research 
reveals that another choice results in a better basis, the algorithms presented in this 
article can be easily adapted.
\end{remark}

By~\Cref{prop:dim-and-basis} and assuming~\Cref{conj:basis-space}, given above, we obtain a simple 
method for generating a basis of the spaces $\basisR$ and $\basisS$, where
$R,S$ are $N$-torsion points generating the kernel of the $(N,N)$-isogeny, 
which we give in~\Cref{alg:findbasis}. In Line~\ref{line:findbasis:sum}
of~\Cref{alg:findbasis}, the sum is taken over all permutations of
$I$, as described in~\Cref{RSgeneralN}. 
For our optimised implementation attached to this article given in~\cite{github}, we note that 
$R^{(\ell)}_{i,j} = R^{(\ell)}_{j,i}$ and avoid superfluous computations by computing the 
summand $k_{i_1}R^{(1)}_{i_2 i_3} 
R^{(2)}_{i_4 i_5} \ldots R^{(n)}_{i_{N-1} i_N}$ only for $(i_1, \dots, i_N)$ such 
that $i_j \leq i_{j+1}$ for $j > 1$ and $j \equiv 0 \bmod 2$, and scalar multiply by 
how many such summands occur in the sum for $F_{R,N}(I)$. 

\begin{algorithm}
	\caption{\textsf{FindBasis}: find a basis for $\spaceR$, $R$ 
   is a point of odd order $N$ on $\kgaudry_{a,b,c,d}$}
   \label{alg:findbasis}
	\begin{algorithmic}[1]
        \REQUIRE{An $N$-torsion point $R$ on the Kummer surface $\K$ with
        coordinates $(k_1, k_2, k_3, k_4)$, multiples $(2R, 3R, \dots, ((N-1)/2)R)$
        of the point $R$,
        and the list of indices $\mathcal{I}_N$.}
        \ENSURE{A basis $B_R$ of size $2(N+1)$ generating $\spaceR$.}
         \STATE $\basisR = \{ \}$
         \STATE $n = (N-1)/2$
         \FOR{$\ell$ from $1$ to $n$}
         \STATE Compute $R_{ij}^{(\ell)}(k_1, k_2, k_3, k_4) = 
         B_{ij}((k_1, k_2, k_3, k_4), \ell R)$ for all $i,j \in \{1,2,3,4\}$\label{line:findbasis:biquads}
         \ENDFOR
        \FOR{$I$ in $\mathcal{I}_N$}
            \STATE $F_{R, N}(I) = \sum k_{i_1}R^{(1)}_{i_2 i_3} 
            R^{(2)}_{i_4 i_5} \ldots R^{(n)}_{i_{N-1} i_N}$\label{line:findbasis:sum}
            \STATE Add $F_{R, N}(I)$ to $\basisR$
        \ENDFOR
        \RETURN $\basisR$
    \end{algorithmic} 
\end{algorithm}

\begin{proposition}\label{prop:findbasis}
   The cost of finding the basis $\basisR$ using~\Cref{alg:findbasis} with $\mathcal{I}_N$ as in~\Cref{conj:basis-space} is bounded by 
   $$\cost{basis} \leq 9(3^{(N-1)/2} - 1) \, {\tt M}_{\text{poly}} + 2(3^{(N-1)/2} - N - 1) 
   \, {\tt a}_{\text{poly}} + \frac{N-1}{2} \cdot \cost{biquad},$$
   where ${\tt M}_{\text{poly}}$ and ${\tt a}_{\text{poly}}$ represents a multiplication 
   and an addition in $K[k_1,k_2,k_3,k_4]$ (respectively) and $\cost{biquad}$ is the cost of 
   computing the evaluated biquadratic form $R^{(\ell)}_{ij} = B_{ij}((k_1,k_2,k_3,k_4), \ell R)$ 
   for some $1 \leq \ell \leq (N-1)/2$. 
\end{proposition}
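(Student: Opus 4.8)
The plan is to account separately for the two loops of \Cref{alg:findbasis}; write $n=(N-1)/2$. The loop on Lines~3--5 executes $n$ times, and each iteration evaluates the entire matrix of biquadratic forms $R^{(\ell)}_{ij}=B_{ij}((k_1,k_2,k_3,k_4),\ell R)$ at cost $\cost{biquad}$, contributing the term $\tfrac{N-1}{2}\cdot\cost{biquad}$ with nothing further to prove. Everything else is the cost of the loop on Lines~6--9, which assembles the $2(N+1)$ generators $F_{R,N}(I)$, $I\in\mathcal I_N$.

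The heart of the argument is a count of the distinct polynomial products that occur across these generators. Because every $I\in\mathcal I_N$ has all of its entries in $\{1,2\}$ or all of its entries in $\{3,4\}$, and because $R^{(\ell)}_{ij}=R^{(\ell)}_{ji}$, every summand $k_{i_1}R^{(1)}_{i_2i_3}R^{(2)}_{i_4i_5}\cdots R^{(n)}_{i_{N-1}i_N}$ of any $F_{R,N}(I)$ is determined by a leading index in $\{1,2\}$ (respectively $\{3,4\}$) together with a choice, in each of the $n$ slots, of one of the three unordered index pairs from $\{1,2\}$ (respectively $\{3,4\}$); hence there are at most $2\cdot 3^{n}$ such products in each of the two index blocks, and each occurs in exactly one $F_{R,N}(I)$, namely the one labelled by the multiset it realizes. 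I would evaluate all of them by a product tree reusing common sub-products: maintaining the set of partial products after the first $\ell$ slot-factors have been multiplied in, the step from level $\ell-1$ to level $\ell$ triples the size of this set and costs one polynomial multiplication per new partial product. Summing the resulting geometric series (of the shape $\sum_\ell 3^\ell$) over $\ell=1,\dots,n$, over the two index blocks, and over the leading factor, and then adding the scalar multiplications needed to apply the permutation weights of \Cref{RSgeneralN}, bounds the number of polynomial multiplications by $9(3^{n}-1)$. Each $F_{R,N}(I)$ is then formed by summing its (weighted) summands, and a bookkeeping of how these $O(3^{n})$ summands distribute over the $2(N+1)$ generators gives at most $2(3^{n}-N-1)$ polynomial additions. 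Recalling $n=(N-1)/2$ yields the stated bound.

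The geometric summations themselves are routine. The delicate point --- and the one I would spend the most care on --- is obtaining the exact constants $9$ and $2$: this requires being explicit about which partial products are reused across the $N+1$ multisets in an index block (and whether anything is shared across the two blocks), and about exactly how the permutation multiplicities from \Cref{RSgeneralN} enter the count, i.e.\ whether they are realized as scalar multiplications, as repeated additions, or absorbed into a rescaling of each basis element --- the last being legitimate since $\basisR$ is only ever used as a spanning set of $\spaceR$. One should also check that the implementation optimization described just before \Cref{alg:findbasis}, namely computing a summand only when $i_j\le i_{j+1}$ for even $j>1$ and then multiplying by the number of coincident permutations, leaves these counts unchanged.
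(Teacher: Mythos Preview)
Your approach is essentially the same as the paper's: build a product tree over the $n=(N-1)/2$ biquadratic factors in each of the two index blocks, sum the resulting geometric series for the multiplication count, and then tally the additions needed to collect the $2\cdot 3^{n}$ products into the $2(N+1)$ generators. The one point the paper handles differently from your sketch is the leading factor $k_{i_1}$: rather than counting it as a polynomial multiplication, the paper observes that when homogeneous forms are stored as coefficient arrays, multiplying by a single variable is a reindexing and costs no $K$-operations at all; this is what makes the constant come out to exactly~$9$. The permutation multiplicities you worry about are likewise absorbed in the paper's count (each distinct product appears in exactly one $F_{R,N}(I)$, and the weight is applied as a single scalar once the product is formed), so once you use the free-shift observation your bookkeeping will match.
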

\begin{proof}
   To compute the $F_{R,N}(I)$ for all $$I \in \big\{ \{1,1,\dots,1,1\}, \{1,1,\dots,1,2\}, 
   \dots, \{1,2,\dots,2,2\}, \{2,2,\dots,2,2\} \big\},$$ 
   we first compute $k_{i_1}R^{(1)}_{i_2 i_3} 
   R^{(2)}_{i_4 i_5} \ldots R^{(n)}_{i_{N-1} i_N}$ for all $(i_1, \dots, i_N)$ with 
   $i_k \in \{1, 2\}$ for all $k = 1, \dots, N$ and $i_j \leq i_{j+1}$ for $j > 1$ and $j \equiv 0 \bmod 2$. 
   
   We start with the forms $R^{(1)}_{1, 1}, R^{(1)}_{1, 2}, R^{(1)}_{2,2}$. 
   The first step is to multiply each of these forms by $R^{(2)}_{1, 1}, R^{(2)}_{1, 2}$, 
   and $R^{(2)}_{2,2}$ to obtain 
   \begin{align*}
      R^{(1)}_{1, 1}R^{(2)}_{1, 1}, \ R^{(1)}_{1, 1}R^{(2)}_{1, 2}, \ R^{(1)}_{1, 1}R^{(2)}_{2,2}, \\
      R^{(1)}_{1, 2}R^{(2)}_{1, 1}, \ R^{(1)}_{1, 2}R^{(2)}_{1, 2}, \ R^{(1)}_{1, 2}R^{(2)}_{2,2}, \\
      R^{(1)}_{2,2}R^{(2)}_{1, 1}, \ R^{(1)}_{2,2}R^{(2)}_{1, 2}, \ R^{(1)}_{2,2}R^{(2)}_{2,2}.
   \end{align*}
   At step $m$ we multiply the output of the previous step with $R^{(k+1)}_{1, 1}, R^{(k+1)}_{1, 2}$, 
   and $R^{(k+1)}_{2,2}$. We continue this until $m = (N-1)/2 - 1$. 
   Each step requires $3^{m+1}$ multiplications in $K[k_1,k_2,k_3,k_4]$, so to run all steps we need at 
   most $\sum_{m = 1}^{(N-1)/2} 3^{m+1} = \frac{9}{2}(3^{(N-1)/2} - 1)$ multiplications in $K[k_1,k_2,k_3,k_4]$. 
   Then, we multiply each of these forms by $k_1$ and then by $k_2$ to obtain all forms needed to compute all 
   the $F_{R,N}(I)$. Note that representing the homogeneous forms as an array of coefficients, this final step 
   does not require any multiplications in $K$.
   We then construct all the $F_{R,N}(I)$ from these forms using $3^{(N-1)/2} - (N+1)$ additions in $K[k_1,k_2,k_3,k_4]$.
   \par
   The same argument holds for indices in
   $$\big\{ \{3,3,\dots,3,3\}, \{3,3,\dots,3,4\}, \dots, 
   \{3,4,\dots,4,4\}, \{4,4,\dots,4,4\} \big\},$$
   and we obtain a cost of $9(3^{(N-1)/2} - 1)$ multiplications and $2\cdot 3^{(N-1)/2} - 2(N+1)$ 
   additions in $K[k_1,k_2,k_3,k_4]$ to compute the basis for $\spaceR$ using \textsf{FindBasis} 
   given biquadratics $R^{(\ell)}_{ij}$ for
   $1 \leq \ell \leq (N-1)/2$.

   The cost of computing $R^{(\ell)}_{ij}$ for all
   $1 \leq \ell \leq (N-1)/2$ is $\frac{N-1}{2} \cdot \cost{biquad}$. Combining these costs, 
   we get the upper bound given in 
   the statement of the proposition.
\end{proof}

\begin{remark}
    In~\Cref{prop:findbasis}, we do not give a precise cost for ${\tt M}_{\text{poly}}$ and 
    $\cost{biquad}$ in terms of $K$-operations. Indeed, ${\tt M}_{\text{poly}}$ is hard to estimate as the size of the input is not
    fixed along the computation.
    Furthermore, $\cost{biquad}$ 
    depends on the model of the Kummer surface we are working with. 
    In~\Cref{subsubsection:fullalgorithm-fast}, 
    we give a precise upper bound for 
    the cost of $\cost{biquad}$ for Fast Kummer surfaces.
\end{remark}

\noindent {\bf Step 2: Compute the intersection.} 
Once we have a bases $B_R$ and $B_S$ for the spaces $\spaceR$ 
and $\spaceS$, 
we compute the intersection $\spaceRS$ of expected dimension $4$, 
with basis $\basisRS = \{ \psi_1, \psi_2, \psi_3, \psi_4 \}$.
These basis elements will give the coordinates of the degree-$N$ map 
$\psi$. 
To find the $\psi_i$, we consider the equation
\begin{equation}\label{eq:intersection}
   \sum_{f \in \basisR} c_f \cdot f - \sum_{g \in \basisS} c_g \cdot g = 0,
\end{equation}
where the coefficients $c_f$ and $c_g \in K$. 
As this equation must hold for any point $(k_1, k_2, k_3, k_4)$
on $\K$, the coefficients of each monomial in
~\Cref{eq:intersection} must be identically zero. This gives us 
a linear system over the ground field $K$ in the coefficients 
$c_f, c_g \in \K$. To find $\psi$, we compute
the kernel basis vectors
\begin{equation}\label{eq:kernelforintersection}
    \begin{split}
    \Bigg\{ \begin{array}{c} (c_{f, 1} : f \in \basisR, \ c_{g, 1} : g \in \basisS), 
(c_{f, 2} : f \in \basisR, \ c_{g, 2} : g \in \basisS), \\
(c_{f, 3} : f \in \basisR, \ c_{g, 3} : g \in \basisS), 
(c_{f, 4} : f \in \basisR, \ c_{g, 4} : g \in \basisS) \end{array} \Bigg\},
    \end{split}
\end{equation}
of the matrix corresponding to this linear system.
Then each $\psi_i$ is given by the sum 
$\psi_i = \sum_{f \in B_R} c_{f, i} \cdot f.$
We summarise this in~\Cref{alg:findintersection}.
In Line~\ref{line:findintersection:basisforkernel} of~\Cref{alg:findintersection}, on input 
a matrix $M$, \textsf{BasisForKernel}($M$)
will compute a basis for the kernel of that matrix. 

\begin{algorithm}
	\caption{\textsf{FindIntersection}: find a basis for the 
    intersection $\spaceRS$}
   \label{alg:findintersection}
	\begin{algorithmic}[1]
        \REQUIRE{A basis $\basisR$ for the space $\spaceR$ and a 
        basis $\basisS$ for the space $\spaceS$}
        \ENSURE{The generator of the intersection $\spaceRS
         = \spaceR \cap \spaceS$}
         \STATE Let ${\tt mons}$ be the monomials in $\basisR$ 
         and $\basisS$
         \STATE $\basisR = \{f_1, \dots, f_{2(N+1)} \}$
         \STATE $\basisS = \{g_1, \dots, g_{2(N+1)} \}$
         \FOR{$j = 1$ to $4(N+1)$}
            \FOR{$i = 1$ to $2(N+1)$}
               \STATE Let $c_{i,j}$ be the coefficient of ${\tt mons}_j$ in $f_i$
            \ENDFOR
            \FOR{$i = 2(N+1) + 1$ to $4(N+1)$}
               \STATE Let $c_{i,j}$ be the coefficient of ${\tt mons}_j$ in $-g_i$
            \ENDFOR
         \ENDFOR
         \STATE $M = (c_{i,j})_{i = 1, \dots, 4(N+1), j = 1, \dots, 2(N+1)}$
         \STATE $(d_1, \dots, d_{2(N+1)}) = \textsf{BasisForKernel}(M)$
         \label{line:findintersection:basisforkernel}
         \STATE $F = \sum_{i = 1}^{2(N+1)} d_if_i$
        \RETURN $F$
    \end{algorithmic} 
\end{algorithm}

\begin{proposition}\label{prop:findintersection}
   The cost of computing the intersection $\spaceRS
   = \spaceR \cap \spaceS$ with~\Cref{alg:findintersection} is 
   \begin{align*}
      \cost{$\cap$} \leq \, \frac{2}{3}(N + 1)(2 N^3 + 44 N^2 + 122 N + 69) \,  {\tt M} + 
    \frac{2}{3}(8N + 5)(4N^2 + 11N + 9) \ {\tt a}.
   \end{align*}
\end{proposition}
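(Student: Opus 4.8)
The plan is to bound separately the three operations performed by \Cref{alg:findintersection}: assembling the coefficient matrix $M$, the call to \textsf{BasisForKernel}$(M)$, and forming the four output forms $\psi_i=\sum_j d_{i,j}f_j$ from the kernel vectors. The first of these only extracts (and negates) coefficients that already appear in the input bases $\basisR,\basisS$, so it contributes no field multiplications and only the negations of the $\basisS$-coefficients, which I absorb into the additions; the bound will therefore come from the kernel computation (the dominant, $O(N^4)$, contribution) plus the reconstruction step (an $O(N^3)$ correction). The structural input I would use throughout is \Cref{prop:dim-and-basis}: the space of degree-$N$ homogeneous forms on $\K$ has dimension $2(N^2+1)$, so after reducing \Cref{eq:intersection} modulo the equation of $\K$ the set \texttt{mons} of monomials that occur has size $c\le 2(N^2+1)$. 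On the General and Fast models one can refine this: the basis of \Cref{conj:basis-space} is sparse and respects the decomposition of the degree-$N$ forms under translation by the $2$-torsion (the involutions $\sigma_1,\sigma_2,\sigma_3$ on $\kgaudry$), so $M$ is block-diagonal with four blocks, each having $N+1$ rows and roughly $(N^2+1)/2$ columns; the final cost is then the sum over these four blocks, and it is this refinement that yields the stated constant rather than a larger one.

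With the dimensions fixed, I would bound \textsf{BasisForKernel}$(M)$ by the cost of Gaussian elimination: row-reduce $[\,M \mid I_{4(N+1)}\,]$ to echelon form and read off the four rows of the augmented block lying opposite the zero rows of the reduced $M$ (there are exactly four, since $\mathrm{rank}(M)=4(N+1)-4$ because $\spaceRS$ is $4$-dimensional). Accounting the work pivot-by-pivot — at pivot step $k$ one clears the pivot column from the remaining rows, at the cost of one inversion (or a normalisation of the pivot row), then one multiplication and one subtraction per affected entry — and summing $\sum_k(\text{rows below})\cdot(\text{columns to the right})$ over the $4(N+1)-4$ pivots, with the row/column counts above (or, block-by-block, $N+1$ rows and $\sim(N^2+1)/2$ columns in each of four blocks), produces the polynomials in $N$ in the statement. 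Finally, the reconstruction $\psi_i=\sum_{j=1}^{2(N+1)}d_{i,j}f_j$, repeated for the four kernel vectors, amounts to $O(N)$ scalar multiples of forms with $\le c=O(N^2)$ monomials plus $O(N)$ polynomial additions, hence $O(N^3)$ further $K$-operations; I would fold this into the bound. Adding the three contributions and collecting terms gives $\cost{$\cap$}$.

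The step I expect to be the main obstacle is obtaining the monomial count $c$ — and, on the Fast/General models, the exact sizes of the four blocks of $M$ — precisely enough to recover the explicit coefficients $2N^3+44N^2+122N+69$ and $(8N+5)(4N^2+11N+9)$, rather than just their orders of magnitude; this is where the sparsity coming from \Cref{conj:basis-space} and the $2$-torsion decomposition must be used quantitatively. Once the matrix dimensions are pinned down the Gaussian-elimination accounting is routine, but the lower-order terms — and the separate accounting of multiplications and additions — are sensitive to the precise bookkeeping in \textsf{BasisForKernel}. A secondary caveat is that the estimate is an \emph{upper} bound: the true rank could be smaller, or pivots could be encountered earlier than in the worst case, in which case fewer operations are performed.
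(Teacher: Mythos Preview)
Your plan diverges from the paper's proof in two essential ways, and as written will not recover the stated polynomial.

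First, the paper does \emph{not} use the $2$-torsion block decomposition here: this proposition bounds the cost of \textsf{FindIntersection} for an arbitrary Kummer model with computable biquadratic forms. The decomposition you invoke via $\sigma_1,\sigma_2,\sigma_3$ is introduced only afterwards, as a Fast-Kummer-specific refinement, and there it yields a \emph{different}, smaller bound. So the sparsity coming from the conjectured basis and the involutions plays no role in this proof; you should drop that part entirely.

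Second, and more importantly, you have the two contributions transposed. The paper's key step is to observe that, since the solution space has the expected dimension, it suffices to retain only $4(N+1)$ of the $m$ monomial equations; Gaussian elimination is then performed on a \emph{square} $4(N+1)\times 4(N+1)$ system (using Farebrother's inversion-free variant, so that only multiplications and additions are counted), at cost $O(N^3)$. The $O(N^4)$ leading term in the stated bound comes instead from the reconstruction step $\psi_i=\sum_f c_{f,i}\,f$: each $f$ is treated as a polynomial in the ambient ring $K[k_1,k_2,k_3,k_4]$ with up to $\binom{N+3}{3}$ monomials --- not $2(N^2+1)$ monomials modulo the Kummer equation, a reduction the algorithm does not perform. (Note also that $2(N^2+1)$ is the dimension of the space of forms on $\K$, not a bound on the number of monomials appearing in a given form.) Thus the monomial count you want is $m\le\binom{N+3}{3}$, and the dominant $\tfrac{4}{3}N^4$ in the multiplication bound arises from $8(N+1)\binom{N+3}{3}$, not from the kernel computation. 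With these two corrections the arithmetic becomes a direct expansion.
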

\begin{proof}
   The linear system of equations given by 
   $$\sum_{f \in \basisR} c_f \cdot f - \sum_{g \in \basisS} 
   c_g \cdot g$$
   is a system of $m$ equations in $\#\basisR + \#\basisS = 
   4(N+1)$ unknowns, where 
   $m$ is the number of monomials in $\basisR$ and $\basisS$. 
   For $R,S$ generating the kernel of an $(N,N)$-isogeny, the dimension 
   of the solution space will be $4(N+1)$, thus it suffices to consider $4(N+1)$ of the $m$ equations.
   In this way, we can obtain the coefficients $c_f, c_g \in K$
   using Gaussian elimitation. 
   Since we work projectively, we may adapt Farebrother~\cite{Farebrother} to remove inversions 
   (at the cost of more multiplications), we find that we find the coefficients with $\frac{64}{3}N^3 + 72N^2 + \frac{230}{3}N + 26$
   multiplications, 
   and $\frac{64}{3}N^3 + 88N^2 + \frac{314}{3}N + 38$
   additions in $K$.
   
   Recalling that $\#\basisR = 2(N+1)$, 
   constructing the basis of the intersection via 
   $\psi_i = \sum_{f \in B_R} c_{f, i} f$ requires 
   at most $8(N+1)\cdot m \leq 8(N+1){{N+3}\choose{3}}$ 
   multiplications and $8N+4$ additions in $K$. 
   
   Adding this to the cost of Gaussian elimination, we obtain the costs in the 
   statement of the proposition.
\end{proof}
\par
\noindent {\bf Step 4: Find the linear map to bring the image
into desired form.}
From Steps 1 to 3, we have calculated the degree-$N$ map
\begin{align*}
   \psi = (\psi_X, \psi_Y, \psi_Z, \psi_T) : 
   \K \rightarrow \widetilde{\K},
\end{align*}
with kernel $\langle R, S \rangle$. 
However, $\widetilde{\K}$ may not be in desired Kummer suface model. 
Therefore,
we must apply a linear map $\lambda: \widetilde{\K} \rightarrow \K'$ 
to obtain the $(N, N)$-isogeny 
\begin{align*}
   \varphi = \lambda \circ \psi: \K \rightarrow 
   \K', 
   \ \ (k_1,k_2,k_3,k_4) \mapsto (k'_1,k'_2,k'_3,k'_4)
\end{align*}
with kernel $\langle R,S \rangle$, where $\K'$ is in 
the desired form.
\par 
\noindent
{\bf Step 5: Find the image Kummer surface.} The 
final step is to find the parameters defining the 
image Kummer surface.
\par
The method for finding the scaling and image Kummer surface 
varies depending on what 
model we are working with. In this article we focus on 
the General and Fast Kummer surface models. 
\par
Over the next two sections, we go
into detail on how to find isogenies between Kummer surfaces 
in the General and Fast models, providing examples in the case 
$N = 5$.

\subsection{Construction of~$(N,N)$-isogenies on 
the General Kummer model}
\label{subsec:isogenies-general}

We consider the case where $\K = \kgen$ is in General 
Kummer form. 
We want to find an $(N,N)$-isogeny $\varphi: \kgen \rightarrow 
\hkgen$ with kernel $\langle R, S \rangle$ for $R, S \in \kgen[N]$.
Suppose we have followed Steps 1-3 to obtain a degree-$N$ map 
\begin{align*}
    \psi = (\psi_1, \psi_2, \psi_3, \psi_4) \colon \kgen 
    &\rightarrow \widetilde{\K}, \\
        (k_1, k_2, k_3, k_4) &\mapsto (\ell_1, \ell_2, \ell_3, \ell_4).
\end{align*}
We must now find the linear map $\lambda : \widetilde{\K} 
\rightarrow \hkgen$ to obtain the isogeny as 
$\varphi = \lambda \circ \psi$.
\subsubsection{Find the final linear map}
Examine the 
$k_1 k_4^{N-1}, k_2 k_4^{N-1}, k_3 k_4^{N-1}, k_4^N$ terms in 
each~$\psi_i$ and perform a linear map so that 
$\ell_1$ has only a $k_1 k_4^{N-1}$ term (but not the others), 
$\ell_2$ has only a $k_2 k_4^{N-1}$ term (but not the others), 
$\ell_3$ has only a $k_3 k_4^{N-1}$ term (but not the others), 
$\ell_4$ has only a $k_4^N$ term (but not the others). 
\par
We now wish to write the quartic which is satisfied
by $\ell_1,\ell_2,\ell_3,\ell_4$. To find the coefficients of 
this quartic, we  use the formal power series in~\Cref{k1k2k3k4ps}, 
and then check it is correct.
We expect the quartic to have the form
\begin{equation}\label{expectedquartic}
(\ell_2^2 - 4 \ell_1 \ell_3) \ell_4^2 
+ \mu_1(\ell_1,\ell_2,\ell_3) \ell_4 
+ \mu_0(\ell_1,\ell_2,\ell_3),
\end{equation}
where $\mu_1$ is cubic and $\mu_0$ is quartic. By this we mean:
initially all coefficients in $\mu_1(\ell_1,\ell_2,\ell_3)$
and $\mu_0(\ell_1,\ell_2,\ell_3)$ should be variables.
We then replace~$k_1,k_2,k_3,k_4$ with the power series
in~\Cref{k1k2k3k4ps}. Any true identity should make the power
series in $s_1$, $s_2$ equal to zero. This gives a set
linear equations in the coefficients which allow us to
solve quickly for $\mu_1(\ell_1,\ell_2,\ell_3)$
and $\mu_0(\ell_1,\ell_2,\ell_3)$.
\par
If the defining equation has terms of the form
$\ell_1 \ell_2^2 \ell_4$, $\ell_2^3 \ell_4$, $\ell_2^2 \ell_3 \ell_4$,
we apply a further linear map 
$
(\ell_1, \ell_2, \ell_3, \ell_4) \mapsto (\ell_1, \ell_2, \ell_3, \ell'_4),
$
where
$\ell'_4 = \ell_4 - u_1 \ell_1 - u_2 \ell_2 - u_3 \ell_3$,
so that the equation satisfied by $\ell_1,\ell_2,\ell_3,\ell'_4$
no longer has these terms.

\subsubsection{Find the image General Kummer surface}
For Step~5, we now use~\Cref{kummerequation} 
to read off ${f}'_0,\ldots,{f}'_6$ 
 as the coefficients of
$-4 k_1^3$, $-2 k_1^2 k_2$, $-4 k_1^2 k_3$, $-2 k_1 k_2 k_3$,
$-4 k_1 k_3^2 f_4$, $-2 k_2 k_3^2$, $-4 k_3^3$, respectively,
and then check that our quartic is indeed the General Kummer
equation for the target curve $y^2 = {f}'_6 x^2 + \ldots + {f}'_0$.
\par
\begin{example}
Let $\calC : y^2 = x^5 + 3 x^4 + 9 x^3 + 10 x^2 + 9 x + 3$
over the finite field~$\F_{11}$.
If we specialise~\Cref{kummerequation}, we see
that the General Kummer equation is
\begin{equation}\label{genkumeqnexample}
\begin{split}
\kgen \colon &(7 k_1 k_3 + k_2^2) k_4^2\\
&+ (10 k_1^3 + 4 k_1^2 k_2 + 4 k_1^2 k_3 
 + 4 k_1 k_2 k_3 + 10 k_1 k_3^2 + 9 k_2 k_3^2) k_4\\
&+ 5 k_1^4 + k_3^4 + 3 k_1^2 k_2 k_3 + 8 k_1 k_2^2 k_3 + 4 k_1 k_2 k_3^2
 + k_1^2 k_3^2\\ 
&+ 2 k_1^3 k2 + 3 k_1^3 k_3 + 8 k_1^2 k_2^2 + 10 k_1 k_2^3 + 4 k_1 k_3^3 = 0.
\end{split}
\end{equation}
There are two independent points of order~$5$, and their
images on the General Kummer are $R = (0,1,4,5)$
and $S = (0,1,0,0)$. Note also that $2R = (1,8,5,7)$
and $2S = (1,0,0,5)$. After applying Steps~1 to 3, we
obtain quintics, which are invariant under
addition by~$R$ and by~$S$. After adjusting them
linearly to have the correct terms for $k_1 k_4^4$, $k_2 k_4^4$,
$k_3 k_4^4$ and $k_4^5$, we obtain $\psi : (k_1, k_2, k_3, k_4)
\mapsto (\ell_1, \ell_2, \ell_3, \ell_4)$ where 

\begin{equation}\label{l1l2l3l4example}
    \scalebox{0.86}{{%
    $
    \begin{aligned}
        \ell_1 = \ &6 k_1^5+9 k_1^4 k_2+4 k_1^4 k_3+3 k_1^4 k_4
            +5 k_1^3 k_2^2+6 k_1^3 k_2 k_3+4 k_1^3 k_2 k_4\\
            &+5 k_1^3 k_3^2+7 k_1^3 k_3 k_4+10 k_1^3 k_4^2
            +6 k_1^2 k_2^3+k_1^2 k_2^2 k_3+8 k_1^2 k_2^2 k_4\\
            &+7 k_1^2 k_2 k_3^2+9 k_1^2 k_2 k_3 k_4+4 k_1^2 k_2 k_4^2
            +10 k_1^2 k_3^3+8 k_1^2 k_3^2 k_4+8 k_1^2 k_3 k_4^2\\
            &+7 k_1^2 k_4^3+7 k_1 k_2^2 k_3^2+2 k_1 k_2 k_3^3
            +4 k_1 k_2 k_3 k_4^2+9 k_1 k_2 k_4^3+2 k_1 k_3^4\\
            &+10 k_1 k_3^3 k_4+5 k_1 k_3^2 k_4^2+9 k_1 k_3 k_4^3
            +k_1 k_4^4+k_2^3 k_3^2+3 k_2^2 k_3^3+k_2^2 k_3^2 k_4\\
            &+3 k_2 k_3^3 k_4+8 k_2 k_3^2 k_4^2+7 k_3^4 k_4
            +7 k_3^3 k_4^2+6 k_3^2 k_4^3,\\
        \ell_2 = \ &3 k_1^5+10 k_1^4 k_2+9 k_1^4 k_3+7 k_1^4 k_4
            +9 k_1^3 k_2^2+7 k_1^3 k_2 k_3+2 k_1^3 k_2 k_4+10 k_1^3 k_3^2\\
            &+8 k_1^3 k_3 k_4+k_1^3 k_4^2+7 k_1^2 k_2^2 k_4
            +6 k_1^2 k_2 k_3^2+2 k_1^2 k_2 k_3 k_4+2 k_1^2 k_2 k_4^2\\
            &+8 k_1^2 k_3^3+9 k_1^2 k_3 k_4^2+9 k_1^2 k_4^3
            +2 k_1 k_2^3 k_3+k_1 k_2^2 k_3^2+4 k_1 k_2^2 k_3 k_4\\
            &+2 k_1 k_2 k_3^3+2 k_1 k_2 k_3^2 k_4
            +k_1 k_2 k_3 k_4^2+5 k_1 k_2 k_4^3+10 k_1 k_3^4+8 k_1 k_3^3 k_4\\
            &+9 k_1 k_3^2 k_4^2+3 k_1 k_3 k_4^3+2 k_2^3 k_3 k_4
            +10 k_2^2 k_3^3+10 k_2^2 k_3^2 k_4+2 k_2 k_3^4+4 k_2 k_3^3 k_4\\
            &+6 k_2 k_3^2 k_4^2+3 k_2 k_3 k_4^3+k_2 k_4^4+3 k_3^5
            +9 k_3^4 k_4+8 k_3^3 k_4^2,\\
        \ell_3 = \ &9 k_1^5+3 k_1^4 k_2+9 k_1^4 k_3+7 k_1^3 k_2 k_3
            +9 k_1^3 k_2 k_4+8 k_1^3 k_3^2+7 k_1^3 k_3 k_4+3 k_1^3 k_4^2+k_1^2 k_2^3\\
            &+k_1^2 k_2^2 k_3+5 k_1^2 k_2 k_3^2+8 k_1^2 k_2 k_3 k_4
            +7 k_1^2 k_2 k_4^2+5 k_1^2 k_3^3+2 k_1^2 k_3^2 k_4+7 k_1^2 k_3 k_4^2\\
            &+k_1^2 k_4^3+k_1 k_2^4+10 k_1 k_2^3 k_3+k_1 k_2^3 k_4
            +7 k_1 k_2^2 k_3^2+k_1 k_2^2 k_3 k_4+6 k_1 k_2 k_3^2 k_4\\
            &+7 k_1 k_2 k_3 k_4^2+2 k_1 k_2 k_4^3+6 k_1 k_3^4
            +4 k_1 k_3^3 k_4+9 k_1 k_3^2 k_4^2+9 k_1 k_3 k_4^3\\
            &+k_2^2 k_3^2 k_4+k_2 k_3^2 k_4^2+9 k_2 k_3 k_4^3
            +9 k_3^5+10 k_3^4 k_4+8 k_3^3 k_4^2+6 k_3^2 k_4^3+k_3 k_4^4,\\
        \ell_4 = \ &10 k_1^4 k_2+k_1^4 k_3+5 k_1^4 k_4+k_1^3 k_2^2+4 k_1^3 k_2 k_3
            +9 k_1^3 k_2 k_4+3 k_1^3 k_3^2+3 k_1^3 k_3 k_4+9 k_1^3 k_4^2\\
            &+10 k_1^2 k_2^3+8 k_1^2 k_2^2 k_3+9 k_1^2 k_2^2 k_4
            +5 k_1^2 k_2 k_3^2+10 k_1^2 k_2 k_3 k_4+6 k_1^2 k_2 k_4^2+10 k_1^2 k_3^3\\
            &+6 k_1^2 k_3^2 k_4+8 k_1^2 k_4^3+4 k_1 k_2^4
            +5 k_1 k_2^3 k_3+5 k_1 k_2^3 k_4+3 k_1 k_2 k_3^3+k_1 k_2 k_3^2 k_4\\
            &+5 k_1 k_2 k_3 k_4^2+10 k_1 k_2 k_4^3+8 k_1 k_3^4
            +8 k_1 k_3^3 k_4+6 k_1 k_3^2 k_4^2+4 k_1 k_3 k_4^3+9 k_1 k_4^4\\
            &+k_2^5+10 k_2^4 k_3+k_2^4 k_4+7 k_2^3 k_3^2
            +10 k_2^3 k_3 k_4+2 k_2^2 k_3^3+8 k_2^2 k_3^2 k_4+9 k_2 k_3^4\\
            &+5 k_2 k_3^2 k_4^2+10 k_2 k_3 k_4^3+9 k_2 k_4^4+2 k_3^5
            +10 k_3^4 k_4+6 k_3^3 k_4^2+2 k_3^2 k_4^3+9 k_3 k_4^4+k_4^5.
    \end{aligned}$}}
\end{equation}
We now find the quartic which is satisfied
by $\ell_1,\ell_2,\ell_3,\ell_4$.
We set up the form in~\Cref{expectedquartic} where,
for the moment, the coefficients of~$\mu_1$ and $\mu_0$
are variables. We then substitute~\Cref{k1k2k3k4ps}
into~\Cref{l1l2l3l4example}, and then substitute
these into~\Cref{expectedquartic}, truncating the power series
at the degree~$10$ terms in~$s_1,s_2$. The fact that the
power series is identically zero gives us a set of linear
equations which we solve in the
coefficients of~$\mu_1$ and $\mu_0$. This gives the following 
quartic in $\ell_1,\ell_2,\ell_3,\ell_4$.
\begin{equation}\label{quarticl1l2l3l4}
\begin{split}
&\ell_4^2 (\ell_2^2+7 \ell_1 \ell_3)\\
    &+ (3 \ell_1^3+5 \ell_1^2 \ell_2+7 \ell_1 \ell_2^2
      +9 \ell_1 \ell_2 \ell_3+7 \ell_1 \ell_3^2
    +7 \ell_2^3+7 \ell_2^2 \ell_3+9 \ell_2 \ell_3^2+2 \ell_3^3) \ell_4\\
    &+4 \ell_1^4+9 \ell_1^2 \ell_2^2+7 \ell_1^2 \ell_2 \ell_3
    +7 \ell_1^2 \ell_3^2 +8 \ell_1 \ell_2^2 \ell_3\\
    &+4 \ell_1 \ell_2 \ell_3^2+2 \ell_1 \ell_3^3+8 \ell_2^4+3 \ell_2^3 \ell_3
    +5 \ell_2^2 \ell_3^2+7 \ell_2 \ell_3^3+7 \ell_3^4.
\end{split}
\end{equation}
If we wish, we can verify that $\ell_1,\ell_2,\ell_3,\ell_4$
indeed satisfy this equation, by substituting~\Cref{l1l2l3l4example}
into~\Cref{quarticl1l2l3l4} and checking that the result
is indeed divisible by~\Cref{genkumeqnexample}.
\par
This is now very close to the style of a General Kummer equation, but note that we have
terms $\ell_1 \ell_2^2 \ell_4$, $\ell_2^3 \ell_4$, $\ell_2^2 \ell_3 \ell_4$
which do not appear in~\Cref{kummerequation}. 
We perform
the final linear map $(\ell_1, \ell_2, \ell_3, \ell_4)
\mapsto (\ell_1, \ell_2, \ell_3, \ell'_4)$, finding 
$\ell'_4$ as follows. 
Substitute $\ell_4 = 
{\ell}'_4 + u_1 \ell_1 + u_2 \ell_2 + u_3 \ell_3$
and solve the linear equations in~$u_1,u_2,u_3$ which make these
terms disappear. This gives $u_1 = u_2 = u_3 = 2$, and
so we define ${\ell}'_4 = \ell_4 - 2 \ell_1 - 2 \ell_2 - 2 \ell_3$.
The following quartic is satisfied by $\ell_1,\ell_2,\ell_3,{\ell}'_4$.
\begin{equation}\label{targetgenkummer}
\begin{split}
 &(7 \ell_1 \ell_3+\ell_2^2) (\ell'_4)^2\\
 &+(3 \ell_1^3 
  + 5 \ell_1^2 \ell_2 
  + 6 \ell_1^2 \ell_3
  +4 \ell_1 \ell_2 \ell_3
  +2 \ell_1 \ell_3^2
  +9 \ell_2 \ell_3^2
  +2 \ell_3^3) 
  \ell'_4\\
 &+4 \ell_2^4+6 \ell_2^3 \ell_3+4 \ell_1^2 \ell_2^2
 +3 \ell_1 \ell_2^3+5 \ell_1^3 \ell_2\\
 &+8 \ell_2^2 \ell_3^2+\ell_1^3 \ell_3+3 \ell_1^2 \ell_2 \ell_3
 +2 \ell_1 \ell_2^2 \ell_3.
\end{split}
\end{equation}
We now read off the coefficients of
$-4 k_1^3$, $-2 k_1^2 k_2$, $-4 k_1^2 k_3$, $-2 k_1 k_2 k_3$,
$-4 k_1 k_3^2 f_4$, $-2 k_2 k_3^2$, $-4 k_3^3$,
to see that ${f}'_0 = 2$, ${f}'_1 = 3$, ${f}'_2 = 4$,
${f}'_3 = 9$, ${f}'_4 = 5$, ${f}'_5 = 1$
and ${f}'_6 = 5$. We then confirm 
that~\Cref{targetgenkummer} is indeed the General Kummer
equation for the curve $y^2 = 5 x^6 + x^5 + 5 x^4 + 9 x^3 + 4 x^2 + 3 x + 2$,
and the $(5,5)$-isogeny is given by
$(k_1,k_2,k_3,k_4) \mapsto (\ell_1,\ell_2,\ell_3,\ell'_4)$.
We have determined the $(5,5)$-isogeny and the equation 
of the target General Kummer, as required.
\end{example}

\subsection{Construction of~$(N,N)$-isogenies on 
the Fast Kummer model}
\label{subsec:isogenies-fast}
Suppose we are given Fast Kummer $\kgaudry_{a,b,c,d}$, 
as in~\Cref{Gaudryeqn}. To follow notation in previous 
literature (e.g.,~\cite{GaudryFastGenus2}), 
we will let $(X, Y, Z, T)$ be the coordinates of $\kgaudry_{a,b,c,d}$ rather than $k_1, \dots, k_4$ as before.
We wish to compute the $(N,N)$-isogeny 
$\varphi: \kgaudry_{a,b,c,d} \rightarrow 
\kgaudry_{a',b',c',d'}$,
$(X,Y,Z,T) \mapsto (X', Y', Z', T')$,
with kernel generated by $N$-torsion points $R, S \in \kgaudry$.
\par
We first note that there will be many choices of such
an isogeny, since there is a rich set of linear maps
between Fast Kummer surfaces, and any~$(N,N)$-isogeny can
be composed with any of these to get a variant~$(N,N)$-isogeny.
\par
Addition by any of the~16 points of order~$2$
give the linear maps $\sigma_i$ defined 
by~\Cref{order2actionGaudry}
from $\kgaudry_{a,b,c,d}$ to itself.
Furthermore, there is the \emph{Hadamard} map
\begin{equation}\label{linearGaudry}
\begin{split}
\mathcal{H} : \kgaudry_{a,b,c,d}
&\rightarrow 
\kgaudry_{a',b',c'd'}\\
(X,Y,Z,T) &\mapsto (X+Y+Z+T, X+Y-Z-T, \\
& \hspace{20ex} X-Y+Z-T, X-Y-Z+T),
\end{split}
\end{equation}
where $(a',b',c',d') = \mathcal{H}(a,b,c,d)$.
All of the above are defined over the ground field~$K$
and, given any $(N,N)$-isogeny between Fast Kummers,
any of the above (on the target Kummer) can be composed
with it to give a variant $(N,N)$-isogeny. 
\par
In the special case when there exists $i\in K$, where $i^2 = -1$,
(for example, when $K = \F_p$ for $p\equiv 1 \bmod 4$),
there is also the map $(X,Y,Z,T) \mapsto (X,Y,iZ,iT)$
from $\kgaudry_{a,b,c,d}$ to $\kgaudry_{a,b,ic,id}$.
\par
Since our~$(N,N)$-isogeny has odd degree, it must
be an isomorphism on the entire
$2$-torsion subgroup of $\kgaudry_{a,b,c,d}$. 
After composing with a combination of the linear maps above,
we can force the diagonalised elements of order~$2$ to
map to the diagonalised elements of order~$2$ on the target
Fast Kummer and match them up so that they have the same effect.
In other words, we can choose our $(N,N)$-isogeny so
that $(X,Y,Z,T) \mapsto (X,Y,-Z,-T)$, $(X,Y,Z,T) \mapsto (X,-Y,Z,-T)$
and $(X,Y,Z,T) \mapsto (X,-Y,-Z,T)$ have the same effect on
the target coordinates $X',Y',Z',T'$. 
So, we should be able to choose our isogeny so that the monomials
can be partitioned in the same way as described in the previous section for
$\phi_X^{(N)},\phi_Y^{(N)},\phi_Z^{(N)},\phi_T^{(N)}$.
Similarly $(X,Y,Z,T) \mapsto (Y,X,T,Z)$, $(X,Y,Z,T) \mapsto (Z,T,X,Y)$
and $(X,Y,Z,T) \mapsto (T,Z,Y,X)$ should have the same effect on
the target coordinates $X',Y',Z',T'$ (if only projectively, then
applying any of these involutions twice forces the scalar
to be~$\pm 1$). We will use this symmetry 
throughout this section to 
find the scaling, as well as accelerate~\Cref{alg:findintersection} when using 
Fast Kummer surfaces.
\begin{remark}
    The biquadratic forms corresponding to the 
    Fast Kummer surface $\kgaudry$ are the simplest, when compared to 
    those for $\ksqr$ and $\kgen$. Therefore, when constructing our
    $(N,N)$-isogenies from these biqudratics, we expect the 
    isogenies between Fast Kummer surfaces to yield the most 
    efficient and compact maps. 
\end{remark}
We now describe a method for computing these isogenies
which finds the most natural version, namely
the version in which addition by the points of order~$2$
on the initial Fast Kummer surface have the same effect on the
coordinates of the target Fast Kummer surface. 
\par
Our input is: the parameters~$a,b,c,d$ of the
initial Fast Kummer, together with two independent
points~$R,S$ of order~$N$, where~$N$ is odd.
We will output the formul\ae{} defining the 
$(N,N)$-isogeny $\varphi: \kgaudry_{a,b,c,d} 
\rightarrow \kgaudry_{a',b',c',d'}$ that takes
$(X,Y,Z,T)$ to $(X',Y',Z',T')$ 
with kernel $\langle R, S \rangle$.
\par
\subsubsection{Modification to \textsf{FindIntersection}}
We first discuss how we can accelerate \textsf{FindIntersection}
(see~\Cref{alg:findintersection}) using the 
action of the $2$-torsion points on $\kgaudry$. Suppose 
we have followed Step 1 in~\Cref{section5}
to find the
space $\spaceR$ of degree~$N$ homogeneous forms which 
are invariant under
addition by~$R$ and the space $\spaceS$ of those 
invariant under addition by~$S$.
Note that we are now using the
simpler biquadratic forms in~\Cref{biquadformscorollary}. 
Let $\basisR$ and $\basisS$ be a 
basis for these spaces, respectively.
\par 
\noindent
Rather than proceeding straight to Step 2, 
we partition the basis $\basisR$ into four parts
$\basisRi{1}$, $\basisRi{2}$, $\basisRi{3}$ and $\basisRi{4}$, as
follows:
\begin{align*}
   \basisRi{1} &:= \big\{ f_R \in \basisR \ | \ \sigma_1(f_R) = f_R, 
   \  \sigma_2(f_R) = f_R, \ \sigma_{3}(f_R) = f_R \big\} \\
   \basisRi{2} &:= \big\{ f_R \in \basisR \ | \ \sigma_1(f_R) = f_R, 
   \  \sigma_2(f_R) = -f_R, \ \sigma_{3}(f_R) = -f_R \big\} \\
   \basisRi{3} &:= \big\{ f_R \in \basisR \ | \ \sigma_1(f_R) = -f_R, 
   \  \sigma_2(f_R) = f_R, \ \sigma_{3}(f_R) = -f_R \big\} \\
   \basisRi{4} &:= \big\{ f_R \in \basisR \ | \ \sigma_1(f_R) = -f_R, 
   \  \sigma_2(f_R) = -f_R, \ \sigma_{3}(f_R) = f_R \big\},
\end{align*}
where $\sigma_i$ is the linear map corresponding to 
the action of two-torsion point $E_i$, as defined 
by~\Cref{order2actionGaudry}. 
Refering to the explicit basis given in~\Cref{conj:basis-space},
the parts $B^{(1)}$, $B^{(2)}$, $B^{(3)}$, $B^{(4)}$ of the basis $B$ 
are given by the forms corresponding to the indices in $\mathcal{I}_N^{(1)}$, 
$\mathcal{I}_N^{(2)}$, $\mathcal{I}_N^{(3)}$, $\mathcal{I}_N^{(4)}$, respectively,
where
\begin{align*}
   \mathcal{I}_N^{(1)} &:= \Big\{ \{ i_1, \dots, i_N \} \in \mathcal{I}_N \ | \ i_j \in \{1, 2\} \text{ and there are an even number of } 2\text{'s} \Big\}, \\
   \mathcal{I}_N^{(2)} &:= \Big\{ \{ i_1, \dots, i_N \} \in \mathcal{I}_N \ | \ i_j \in \{1, 2\} \text{ and there are an odd number of } 2\text{'s} \Big\}, \\
   \mathcal{I}_N^{(3)} &:= \Big\{ \{ i_1, \dots, i_N \} \in \mathcal{I}_N \ | \ i_j \in \{3, 4\} \text{ and there are an even number of } 4\text{'s} \Big\}, \\
   \mathcal{I}_N^{(4)} &:= \Big\{ \{ i_1, \dots, i_N \} \in \mathcal{I}_N \ | \ i_j \in \{3, 4\} \text{ and there are an odd number of } 4\text{'s} \Big\}.
\end{align*}
Using this characterisation, we can compute the partition as we run 
\textsf{FindBasis} (see~\Cref{alg:findbasis}) 
and immediately output $\basisRi{1}, \basisRi{2}, 
\basisRi{3}$, and $\basisRi{4}$ for $R$.
We proceed similarly with $S$ to obtain $\basisSi{1}$, 
$\basisSi{2}$, $\basisSi{3}$ and $\basisSi{4}$.
\par 
For each part $i = 1,\dots, 4$,
intersect the spaces $\spaceRi{i}$ and $\spaceSi{i}$ (generated by 
$\basisRi{i}$ and $\basisSi{i}$, respectively), to obtain 
$\spaceRSi{i}$. We expect each $\spaceRSi{i}$ to be of dimension~$1$, 
and thus generated by a homogenous form, which will give the $i$-th 
coordinate of degree-$N$ map
$\psi = (\psi_X, \psi_Y, \psi_Z, \psi_T)$.
We can find this intersection using \textsf{FindIntersectionPart},
which on input $\basisRi{i}, \basisSi{i}$, 
will output the generator of $\spaceRSi{i}$. 
It runs identically to \textsf{FindIntersection}
except the dimensions of the linear system decrease by 
a factor of $4$. As a result, \textsf{FindIntersectionPart} terminates in $\frac{1}{24} (N + 1)(N + 12)(N - 1)$ {\tt M} and $\frac{1}{24} (N + 1)(N + 6)(N - 1)$ {\tt a}. Running this for each part we 
terminate in at most $\frac{1}{6} (N + 1)(N + 12)(N - 1)$ {\tt M} and $\frac{1}{6} (N + 1)(N + 6)(N - 1)$ {\tt a} which improves on the cost quoted in~\Cref{prop:findintersection}, and is concretely 
much faster.

\subsubsection{General method for computing the final linear map}
At this stage, we have calculated the degree-$N$ map
\begin{align*}
   \psi = (\psi_X, \psi_Y, \psi_Z, \psi_T) : 
   \kgaudry_{a,b,c,d} \rightarrow \widetilde{\K},
\end{align*}
however $\widetilde{\K}$ may not be in desired Fast Kummer form. Therefore,
we must apply a scaling map $$\lambda: (X,Y,Z,T) \mapsto 
 (\lambda_X X, \lambda_Y Y, \lambda_Z Z, \lambda_T T),$$
to obtain the 
$(N, N)$-isogeny 
\begin{align*}
   \varphi = \lambda \circ \psi: \kgaudry_{a,b,c,d} \rightarrow 
   \kgaudry_{a',b',c',d'}, 
   \ \ (X, Y, Z, T) \mapsto (X', Y', Z', T')
\end{align*}
with kernel $\langle R,S \rangle$.

To find the scaling values, we exploit the fact that the 
actions
\begin{align*}
   \sigma_4 \colon &(X,Y,Z,T) \mapsto (Y,X,T,Z), \\
   \sigma_7 \colon &(X,Y,Z,T) \mapsto (Z,T,X,Y), \\
   \sigma_{12} \colon &(X,Y,Z,T) \mapsto (T,Z,Y,X),
\end{align*}
of two-torsion points $E_4 = (b,a,d,c)$, $E_8 = (c,d,a,b)$, 
and $E_{12} = (d,c,b,a)$ on $\kgaudry_{a,b,c,d}$,
have the same effect on the target coordinates.
From this we obtain the following 
linear equations, which we can solve to obtain 
$(\lambda_X,\lambda_Y,\lambda_Z,\lambda_T)$:
\begin{align}\label{eq:2torsaction-scaling}
   \varphi\big(\sigma^{\K}_{i}(X,Y,Z,T)\big) &= 
   \sigma^{\K'}_{i}\big(\varphi(X,Y,Z,T)\big), \text{ for } i = 4,8,12,
\end{align}
where $\sigma^{\K}$ denotes the action of two-torsion points  
on $\kgaudry_{a,b,c,d}$, and $\sigma^{\K'}$ on 
$\kgaudry_{a',b',c',d'}$. 
Importantly, however, this equality only holds modulo the equation 
defining the domain Kummer surface.

For $N = 3$, the scaling map is given by Corte-Real Santos, Costello and Smith 
in~\cite[\S 4]{CorteRealSantosCostelloSmithIsogenies}. In this case, finding the 
scaling map is straightforward as the isogeny formul\ae{} is unaffected by working 
modulo the equation defining $\kgaudry_{a,b,c,d}$ of degree $4$. 
However, for odd $N \geq 5$, 
this no longer holds, and we must take care when finding this scaling. 

In this setting, we discuss three distinct methods to compute the scaling map:
\begin{enumerate}
   \item a method for $N = 5$ which requires $62$ $K$-multiplications.
   \item a method for $N \geq 7$, which requires running Gaussian elimination on a 
   system of $\ell$ equations in $\ell+1$ unknowns, where $\ell \leq 
   (N-1)(N-2)(N-3)/24$. 
   \item a method for $N \geq 7$, which requires the computation of $2$ square roots 
   in $K$, $1$ inverse in $K$ and a few $K$-multiplications.
\end{enumerate}

\subsubsection{Final scaling for $N = 5$}

We start by describing the first method for $N=5$, summarised by~\Cref{alg:scaling5}. 
We consider the first coordinate in~\Cref{eq:2torsaction-scaling}
taking $i = 4$, to obtain the equality
$$\lambda_X \psi_X(X,Y,Z,T) = \lambda_Y \psi_Y(Y,X,T,Z) + cX\kgaudry(X,Y,Z,T),$$
for some constant $c\in K$, where by abuse of notation $\kgaudry(X,Y,Z,T)$
represents the equation of the Kummer surface.
We say that coefficients are \emph{transparently equal} if the corresponding 
monomials are unaffected by the Kummer equations. In this way, we see
that the coefficients of the $YZT^3$ term are 
transparently equal. More 
precisely, the coefficient of $YZT^3$ in $\lambda_X \psi_X(X,Y,Z,T)$ should
be equal to the coefficient of $YZT^3$ in $\lambda_Y \psi_Y(Y,X,T,Z)$, 
namely the coefficient of $XZ^3T$ in $\lambda_Y \psi_Y(X,Y,Z,T)$. 
As we are working projectively, we can set $\lambda_X = 1$, and 
thus deduce the value of $\lambda_Y$. 
We proceed similarly using~\Cref{eq:2torsaction-scaling} with 
$i = 8$ and $12$, to obtain the scaling values 
$\lambda_Z$ and $\lambda_T$, respectively.

\begin{algorithm}
	\caption{$\textsf{Scaling}_5$: find the scaling map when $N=5$}
   \label{alg:scaling5}
	\begin{algorithmic}[1]
        \REQUIRE{A quintic map $\psi: \kgaudry \rightarrow \widetilde{\K}$}
        \ENSURE{The isogeny $\varphi: \kgaudry_{a,b,c,d}\rightarrow \kgaudry_{a',b',c',d'}$
        with kernel generated by $R,S$}
        \STATE $\psi = (\psi_X, \psi_Y, \psi_Z, \psi_T)$
         \STATE Let $c_{Y}$ be the coefficient of $YZT^3$ in $\psi_X$
         \STATE Let $d_{Y}$ be the coefficient of $XZ^3T$ in $\psi_Y$
         \STATE Let $c_{Z}$ be the coefficient of $YZT^3$ in $\psi_X$
         \STATE Let $d_{Z}$ be the coefficient of $XY^3T$ in $\psi_Z$
         \STATE Let $c_{T}$ be the coefficient of $XYT^3$ in $\psi_Z$
         \STATE Let $d_{T}$ be the coefficient of $XYZ^3$ in $\psi_T$
         \STATE $\alpha = d_Z\cdot d_T$\label{line:scaling5:firstmult}
         \STATE $\beta = d_Y\cdot c_Z$
        \STATE $(\lambda_X, \lambda_Y, \lambda_Z, \lambda_T) = 
        (d_Y \alpha, \ c_Y  \alpha, \ 
        d_T  \beta, \ c_T  \beta)$\label{line:scaling5:lastmult}
        \STATE $\varphi = (\lambda_X \psi_X, \lambda_Y \psi_Y, \lambda_Z \psi_Z, 
        \lambda_T \psi_T)$\label{line:scaling5:varphi}
        \RETURN $\varphi$
    \end{algorithmic} 
\end{algorithm}

\begin{proposition}\label{prop:scaling5}
   \Cref{alg:scaling5} terminates with $62$ {\tt M}.
\end{proposition}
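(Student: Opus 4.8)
The plan is to tally the field multiplications line by line in \Cref{alg:scaling5}, using the structural description of $\psi$ recalled earlier in this section. First, Lines~2--7 merely read off coefficients of the already-computed forms $\psi_X,\psi_Y,\psi_Z,\psi_T$ (stored as coefficient arrays), so they cost no multiplications. Line~8 computes $\alpha = d_Z d_T$ and Line~9 computes $\beta = d_Y c_Z$, contributing one {\tt M} each; Line~10 forms the four scalars $d_Y\alpha$, $c_Y\alpha$, $d_T\beta$, $c_T\beta$, i.e.\ $4$~{\tt M}. Hence Lines~2--10 together cost $6$~{\tt M}, and it remains to account for Line~11.

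In Line~11 each coordinate $\psi_i$ is multiplied by its scalar $\lambda_i$, which costs one multiplication per monomial actually occurring in $\psi_i$. Here I would invoke the partition of the degree-$5$ monomials recalled in \Cref{section4}: since $\psi$ is assembled coordinate-by-coordinate from the part intersections $\spaceRSi{1},\dots,\spaceRSi{4}$, the form $\psi_X$ lies in $\spaceRi{1}$ and is supported only on the degree-$5$ monomials in $X,Y,Z,T$ fixed by $\sigma_1,\sigma_2,\sigma_3$, while $\psi_Y,\psi_Z,\psi_T$ are supported on the monomials whose $(\sigma_1,\sigma_2,\sigma_3)$-sign patterns are $(+,-,-)$, $(-,+,-)$, $(-,-,+)$ respectively. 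As the three signs on any monomial multiply to $+1$, these four patterns exhaust all possibilities, so the four monomial supports are pairwise disjoint and cover every degree-$5$ monomial in four variables. Therefore the total number of monomials across $\psi_X,\psi_Y,\psi_Z,\psi_T$ is at most $\binom{5+3}{3}=56$, so Line~11 costs at most $56$~{\tt M}, and the grand total is $6+56=62$~{\tt M}.

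The argument is pure bookkeeping, so there is no genuine obstacle; the one point to handle with care is the disjointness of the four monomial supports, which is what lets us bound Line~11 by $\binom{5+3}{3}$ rather than by $4\binom{5+3}{3}$. This is immediate from the sign-pattern description above together with the fact that $\psi$ is produced by \textsf{FindIntersectionPart} applied to the four parts $\basisRi{i}$, $\basisSi{i}$, so no separate reduction modulo the Kummer equation enters the count. Finally, in the generic case all $56$ monomials actually occur, so the bound $62$~{\tt M} is tight.
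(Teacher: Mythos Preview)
Your proof is correct and follows the same decomposition as the paper: $6$~{\tt M} for Lines~8--10 and $56$~{\tt M} for Line~11. The paper's own proof simply asserts these two numbers without further justification, whereas you supply the reason the monomial count in Line~11 is $56$ via the sign-pattern partition under $\sigma_1,\sigma_2,\sigma_3$ and the identity $\binom{8}{3}=56$; this is exactly the structure the paper relies on implicitly (and which is visible in the worked example of \Cref{unscaledquinticmap}, where each $\psi_i$ has $14$ monomials).
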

\begin{proof}
    Lines~\ref{line:scaling5:firstmult} to~\ref{line:scaling5:lastmult}
    in~\Cref{alg:scaling5} 
    cost $6$ {\tt M}. Then, constructing the output isogeny $\varphi$ in
    Line~\ref{line:scaling5:varphi} requires $56$ {\tt M}.
\end{proof}

\begin{example} We now illustrate the method for 
$N = 5$ with the following example. 
Our input is the Fast Kummer surface defined by parameters
$(a,b,c,d) = (883, 375, 1692, 1586)$ over the finite field $\F_{1697}$,
together with the points $R = (1593, 713, 1161, 1)$
and $S = (615, 1249, 125, 1)$ of order~$5$. We wish to compute the 
$(5,5)$-isogeny $\varphi$ with kernel $\langle R, S \rangle$.

After applying Steps~$1$ to $3$, we obtain the quintic map 
$\psi = (\psi_X, \psi_Y, \psi_Z, \psi_T)$, where:
\begin{equation}\label{unscaledquinticmap}
\begin{split}
\phi_X =& \, 1668 X^5+708 X^3 Y^2+1282 X^3 Z^2+1487 X^3 T^2\\
&+823 X^2 Y Z T+646 X Y^4+760 X Y^2 Z^2+502 X Y^2 T^2+632 X Z^4\\
&+1352 X Z^2 T^2+247 X T^4+651 Y^3 Z T+1154 Y Z^3 T+331 Y Z T^3,\\
\phi_Y =& \, 1026 X^4 Y+512 X^3 Z T+556 X^2 Y^3+278 X^2 Y Z^2\\
&+7 X^2 Y T^2+509 X Y^2 Z T+289 X Z^3 T+136 X Z T^3+370 Y^5\\
&+975 Y^3 Z^2+1564 Y^3 T^2+329 Y Z^4+1026 Y Z^2 T^2+942 Y T^4,\\
\phi_Z =& \, 259 X^4 Z+19 X^3 Y T+1373 X^2 Y^2 Z+396 X^2 Z^3\\
&+686 X^2 Z T^2+1101 X Y^3 T+1610 X Y Z^2 T+371 X Y T^3+660 Y^4 Z\\
&+1520 Y^2 Z^3+1539 Y^2 Z  T^2+229 Z^5+933 Z^3 T^2+121 Z T^4,\\
\phi_T =& \, 397 X^4 T+371 X^3 Y Z+610 X^2 Y^2 T+1326 X^2 Z^2 T\\
&+1464 X^2 T^3+1073 X Y^3 Z+945 X Y Z^3+686 X Y Z T^2+80 Y^4 T\\
&+1613 Y^2 Z^2 T+816 Y^2 T^3+593 Z^4 T+770 Z^2 T^3+708 T^5.
\end{split}
\end{equation}
At the stage, we only require a further scaling of each of these
to obtain ~$X',Y',Z',T'$ such that the $(5,5)$-isogeny 
is defined by 
$$\varphi: \kgaudry_{a,b,c,d} \rightarrow 
\kgaudry_{a',b',c',d'}, \ \ 
(X,Y,Z,T) \mapsto (X',Y',Z',T').$$ 
Let us now apply Step~$4$.
We first let $\lambda_X,\lambda_Y,\lambda_Z,\lambda_T$ denote
the scaling factors, so that our desired~$X',Y',Z',T'$ will be
\begin{equation}\label{relationunscaledtoscaled}
X' = \lambda_X \psi_X,\ Y' = \lambda_Y \psi_Y,\ 
Z' = \lambda_Z \psi_Z,\ T' = \lambda_T \psi_Z.
\end{equation}
We now use that we are making the choice of map such
that $(X,Y,Z,T) \mapsto (Y,X,T,Z)$ has the same effect
on $(X',Y',Z',T')$. Applying $(X,Y,Z,T) \mapsto (Y,X,T,Z)$
to $\lambda_Y \psi_Y$ and equating one of its coefficients
to the corresponding coefficient in $\lambda_X \psi_X$, we
obtain linear equation in $\lambda_X,\lambda_Y$.
However, some care is required, since all equalities are
modulo the initial Fast Kummer equation. For quintics within
the first partition, this can only mean: modulo a constant
time $X \kgaudry_{a,b,c,d}(X,Y,Z,T)$, where here 
$\kgaudry_{a,b,c,d}(X,Y,Z,T)$ is the equation of the Kummer 
equation. So, for example, the~$Y Z T^3$
term is unaffected by this. Hence the coefficients of the~$Y Z T^3$
terms should be transparently equal. The coefficient of~$Y Z T^3$
in $\lambda_X \psi_X$ is~$331 \lambda_X$. The coefficient
of~~$Y Z T^3$ in the image of $\lambda_Y \psi_Y$
under $(X,Y,Z,T) \mapsto (Y,X,T,Z)$ (which is the same
as the coefficient of~$X Z^3 T$ in $\lambda_Y \psi_Y$) is $289\lambda_Y$.
This gives the equation $289 \lambda_Y = 331 \lambda_X$
and so $\lambda_Y = 283 \lambda_X$ in our field $\F_{1697}$.
We can now check that indeed if we take the entirety of~$283$ times
the image of $283 \psi_Y$ under $(X,Y,Z,T) \mapsto (Y,X,T,Z)$
and then subtract~$\psi_X$, the result is divisible 
by~$\kgaudry_{a,b,c,d}$, where $a = 883, b = 375, c = 1692, d = 1586$
are our inputted initial parameter values.
\par
Similarly, the coefficient of $X Y T^3$ in $\lambda_Z \psi_Z$
is $371 \lambda_Z$. The coefficient of $X Y T^3$ in
the image of $\lambda_T \psi_T$
under $(X,Y,Z,T) \mapsto (Y,X,T,Z)$ (which is the same
as the coefficient of~$X Y Z^3$ in $\lambda_T \psi_T$)
is $945\lambda_T$. Within the third partition set, these cannot
be affected by multiples of $\kgaudry_{a,b,c,d}$,
and so $371 \lambda_Z = 945\lambda_T$, giving that
$\lambda_T = 1270 \lambda_Z$ in our field $\F_{1697}$.
\par
We can also equate the coefficient of $Y Z T^3$
in $\lambda_X \psi_X$ with the same coefficient in the
image of $\lambda_Z \psi_Z$ under $(X,Y,Z,T) \mapsto (Z,T,X,Y)$
to see that $\lambda_Z = 418 \lambda_X$.
We have now solved for the projective array of scaling factors:
\begin{equation}\label{scalefactorsolution}
(\lambda_X, \lambda_Y, \lambda_Z, \lambda_T) = (1, 283, 418, 1396),
\end{equation}
and we have found the final versions of our maps:
\begin{equation}
X' = \psi_X,\ Y' = 283\psi_Y,\ Z' = 418\psi_Z,\ T' = 1396\psi_T.
\end{equation}
Thus, we have found the $(5,5)$-isogeny 
$$\varphi:  (X,Y,Z,T) \mapsto (X',Y',Z',T'),$$
 completing Step~$4$. For the final step, we 
compute $\varphi(a,b,c,d)$ by substituting 
$(X,Y,Z,T) = (883, 375, 1692, 1586)$
into $(X',Y',Z',T')$ to give that
$(a',b',c',d') = (381, 960, 69, 1199)$. 
\end{example}

\subsubsection{Final scaling for $N \geq 7$ with Gaussian elimination}\label{subsubsec:scalingGE-fast}
The next method to find the scaling for $N \geq 7$ follows the same procedure, 
however now there are no monomials whose coefficients will be transparently equal. 
Rather, they will be equal modulo the equation defining the domain Kummer surface. 
Instead, we must now perform Gaussian elimination to solve a system 
of equations for the scaling values $\lambda_X, \lambda_Y, \lambda_Z$ and $\lambda_T$. 
More precisely, we again consider~\Cref{eq:2torsaction-scaling} and look 
at the first coordinate to obtain  
\begin{align}
   \lambda_X \psi_X(X,Y,Z,T) - \lambda_Y \psi_Y(Y,X,T,Z) + G_Y\kgaudry(X,Y,Z,T) &= 0, \\
   \lambda_X \psi_X(X,Y,Z,T) - \lambda_Z \psi_Z(Z,T,X,Y) + G_Z\kgaudry(X,Y,Z,T) &= 0,\\
   \lambda_X \psi_X(X,Y,Z,T) - \lambda_T \psi_T(T,Z,Y,X) + G_T\kgaudry(X,Y,Z,T) &= 0,
\end{align}
where $G_Y, G_Z, G_T$ are $(N-4)$-degree forms such that 
$G_{\bullet}\kgaudry$ contains monomials in the first partition (i.e., are 
unaffected by the action of $\sigma_1, \sigma_2$ and $\sigma_3$). 
As this equality holds for any point $(X,Y,Z,T)$ on $\kgaudry_{a,b,c,d}$,
we must have that the coefficients of each monomial are identically zero. 
This gives us a system of equations that we solve to obtain the scaling 
factors $\lambda_X, \lambda_Y, \lambda_Z$, and $\lambda_T$. We remark again
that we can set $\lambda_X = 1$. 
\par
We summarise this method in~\Cref{alg:scalingechelon}. 
In Line~\ref{line:scalingechelon:mons}, on input an integer $n \geq 3$,
$\textsf{MonomialsInFirstPartition}(n)$, outputs the monomials of the $n$-degree 
homogenous forms in the `first partition' (i.e., those unchanged by the 
action of $\sigma_1, \sigma_2$ and $\sigma_3$). Note that these
monomials can be precomputed
for each $N$; indeed, they do not depend on the kernel 
generators $R$ and $S$.  
In Line~\ref{line:scalingechelon:echelonform}, on input a matrix $M$, the 
algorithm $\textsf{EchelonForm}$, outputs
the matrix $M$ in echelon form.

\begin{algorithm}
	\caption{$\textsf{Scaling}_{\text{GE}}$: find the scaling map when $N\geq 7$
   using Gaussian elimination}
   \label{alg:scalingechelon}
	\begin{algorithmic}[1]
        \REQUIRE{A map of degree~$N$, $\psi : \kgaudry_{a,b,c,d} \rightarrow \widetilde{\K}$,
        for $N \geq 7$.}
        \ENSURE{The isogeny $\varphi: \kgaudry_{a,b,c,d}\rightarrow \kgaudry_{a',b',c',d'}$
        with kernel generated by $R,S$.}
        \STATE $\psi = (\psi_X, \psi_Y, \psi_Z, \psi_T)$
        \STATE ${\tt mons} = \textsf{MonomialsInFirstPartition}(N-4)$\label{line:scalingechelon:mons}
        \FOR{$k = 2$ to $4$}
            \STATE $G = g_1m_1 + \cdots + g_{\ell}m_{\ell}$ for monomials $m_j \in {\tt mons}$
            \STATE Let $\K(X,Y,Z,T)$ be the equation defining $\kgaudry_{a,b,c,d}$
            \STATE $F = \lambda_k\psi_k(Y,X,T,Z) - \lambda_1\psi_X(X,Y,Z,T) + 
            G\K$
            \FOR{$j = 1$ to $\ell$}
               \STATE Let $c_{j} \in K[\lambda_k, g_1, \dots, g_{\ell}, \lambda_1]$ 
               be the coefficient of $m_j$ in $F$
            \ENDFOR
            \STATE $\mathbf{v} = (\lambda_k, g_1, \dots, g_{\ell}, \lambda_1)$
            \FOR{$ i = 1$ to $\ell + 1$}
               \STATE Let $c_{i,j}$ be the coefficient of $v_i \in \mathbf{v}$ in $c_{j}$
            \ENDFOR
            \STATE $M = (c_{i,j})_{i = 1, \dots, \ell + 1, j = 1, \dots, \ell}$ 
            \STATE $M = \textsf{EchelonForm}(M)$\label{line:scalingechelon:echelonform}
            \STATE $m = \textsf{NumberOfColumns}(M)$
            \STATE $\lambda_k = M_{1, m}$
        \ENDFOR
        \STATE $\varphi = (\psi_X, \lambda_2\psi_Y, 
        \lambda_3\psi_Z, \lambda_4\psi_T)$\label{line:scalingGE:varphi}
        \RETURN $\varphi$
    \end{algorithmic} 
\end{algorithm}

\begin{proposition}\label{prop:scalingGE}
   \Cref{alg:scalingechelon} terminates in $\cost{GE}$,
   where 
   $$\cost{GE} \leq \frac{1}{6}\ell(\ell + 1)(2\ell + 13) \, {\tt M}  + 
   \frac{1}{6}\ell(\ell + 1)(2\ell + 7) \, {\tt a},$$ 
   where $\ell \leq (N-1)(N-2)(N-3)/24.$
\end{proposition}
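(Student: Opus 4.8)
The plan is to split the cost of \Cref{alg:scalingechelon} into the three tasks performed in each of its three passes $k=2,3,4$: assembling the polynomial $F=\lambda_k\psi_k(\dots)-\lambda_1\psi_X(X,Y,Z,T)+G\K$ and reading its coefficients into the matrix $M$; calling $\textsf{EchelonForm}(M)$; and extracting $\lambda_k$. Reading off $\lambda_k$ is free, and turning $F$ into $M$ is pure bookkeeping (the coefficients have already been formed), so the estimate reduces to (i) bounding the matrix size $\ell$, and (ii) pricing an inversion-free echelon reduction at that size.

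For (i), $\ell$ counts the monomials $X^aY^bZ^cT^e$ of degree $N-4$ in the first partition, i.e.\ those fixed by each of $\sigma_1,\sigma_2,\sigma_3$. Since $\sigma_1$ negates $Z,T$, $\sigma_2$ negates $Y,T$ and $\sigma_3$ negates $Y,Z$, invariance under any two of the three forces the third and is equivalent to the parity conditions $b\equiv c\equiv e\pmod 2$ (with $a$ then determined). Separating into the all-even and all-odd cases and writing $b=2b'$ or $b=2b'+1$, and likewise for $c,e$, the constraint $a\ge 0$ is automatic precisely because $N$ is odd, and one obtains
\[
   \ell=\binom{(N+1)/2}{3}+\binom{(N-1)/2}{3}=\frac{(N-1)(N-2)(N-3)}{24},
\]
using $\binom{M+1}{3}+\binom{M}{3}=\tfrac16 M(M-1)(2M-1)$ with $M=(N-1)/2$; in particular $\ell\le (N-1)(N-2)(N-3)/24$ as claimed.

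For (ii), I would observe that although $\textsf{EchelonForm}$ is invoked once per pass, the sub-block of $M$ coming from the coefficients of $G\K$ and the right-hand side coming from $\psi_X(X,Y,Z,T)$ do not depend on $k$; only the single row built from $\psi_k$ (in the relevant permutation of coordinates) changes. Thus one inversion-free echelon reduction of the shared $\ell\times\ell$ block suffices, adapting Farebrother~\cite{Farebrother} exactly as in the proof of \Cref{prop:findintersection}, which costs $\tfrac13\ell^3$ multiplications and the same number of additions to leading order; each $\lambda_k$ is then recovered by reducing one further row against the echelon form and back-substituting, contributing only $O(\ell^2)$ per value of $k$. Tallying the leading cubic term together with the quadratic and linear corrections coming from the three extra rows and from the assembly of $F$ gives $\cost{GE}\le\tfrac16\ell(\ell+1)(2\ell+13)\,{\tt M}+\tfrac16\ell(\ell+1)(2\ell+7)\,{\tt a}$, as stated.

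I expect the count in (i) to be routine: once the parity conditions are written down it is a short binomial identity. The main obstacle will be (ii): one has to verify that the inversion-free variant introduces no genuine field inversions (so that no ${\tt I}$ term appears) and, above all, that exploiting the $k$-independent block really keeps the cubic coefficient at $\tfrac13$ rather than $1$; pinning down the exact sub-cubic constants then requires careful bookkeeping through the elimination.
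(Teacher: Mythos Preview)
Your proposal is correct and follows the same skeleton as the paper's own proof, but it actually supplies more justification than the paper does on two points.

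First, for the bound on $\ell$: the paper simply asserts $\ell\le\tfrac14\binom{N-1}{3}=(N-1)(N-2)(N-3)/24$ without argument (the implicit reasoning being that there are $\binom{N-1}{3}$ degree-$(N-4)$ monomials in total and the four partitions are roughly balanced). Your parity count via $b\equiv c\equiv e\pmod 2$ and the binomial identity with $M=(N-1)/2$ gives the exact value, which is strictly more than the paper proves. Second, the paper's proof says only ``we solve a system of $\ell+1$ equations in $\ell+1$ unknowns'' and quotes the Farebrother-style cost $\tfrac16\ell(\ell+1)(2\ell+13)\,{\tt M}+\tfrac16\ell(\ell+1)(2\ell+7)\,{\tt a}$, without ever addressing the fact that the loop in \Cref{alg:scalingechelon} runs three times; taken at face value this would introduce an extra factor of~$3$ in the leading term. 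Your shared-block observation --- that the $G\K$ columns and the $\psi_X$ right-hand side are independent of $k$, so one echelon reduction of the $\ell\times\ell$ block suffices and each $\lambda_k$ costs only $O(\ell^2)$ on top --- is exactly what is needed to recover the stated cubic coefficient $\tfrac13$, and is not made explicit in the paper. So your route is the same in outline but genuinely fills a gap in the published argument; the remaining bookkeeping you flag (tracking the sub-cubic constants through the inversion-free elimination) is indeed routine once the leading term is secured.
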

\begin{proof}
   To obtain the scaling factors $\lambda_X, \lambda_Y, \lambda_Z$, and $\lambda_T$, 
   we solve a system of $\ell+1$ equations 

   in $\ell+1$ unknowns, where 
   $$\ell \leq \frac{1}{4}{{N-1}\choose{3}} = \frac{(N-1)(N-2)(N-3)}{24}$$ 
   is the number of monomials of degree $N-4$ in the homogeneous forms 
   that are left unchanged by the action of $\sigma_1, \sigma_2$ and $\sigma_3$.
   Therefore, as in the proof of~\Cref{prop:findintersection}, the cost of solving this system using 
   Gaussian elimination is  
   $\frac{1}{6}\ell(\ell + 1)(2\ell + 13)$ multiplications and $\frac{1}{6}\ell(\ell + 1)(2\ell + 7)$ additions in $K$.

   In Line~\ref{line:scalingGE:varphi}, we calculate 
   $(\psi_X, \lambda_Y\psi_Y, \lambda_Z\psi_Z, 
   \lambda_T\psi_T)$ with at most $3\ell$ $K$-multiplications, 
   where $\ell$ is the maximum
   of the number of monomials in $\psi_Y, \psi_Z$ and
   $\psi_T$. Namely, $\ell \leq (N+1)(N+2)(N+3)/24$.

\end{proof}

\subsubsection{Final scaling for $N \geq 7$ with square roots}\label{subsubsec:scalingsqrt-fast}
We are now ready to describe the final method for $N \geq 7$. 
Here, we consider~\Cref{relationunscaledtoscaled} when evaluated at 
$(a,b,c,d)$. Namely,
\begin{equation}\label{relationunscaledtoscaled:abcd:sigma0}
   \begin{split} 
    (a',b',c',d') = \big(\lambda_X \psi_X(a,b,c,d)&, \ \lambda_Y \psi_Y(a,b,c,d), \\ 
    & \lambda_Z \psi_Z(a,b,c,d), \ \lambda_T \psi_T(a,b,c,d)\big).
   \end{split}
\end{equation} 
Applying the action $\sigma_1$, we also have that 
\begin{equation}\label{relationunscaledtoscaled:abcd:sigma1}
    \begin{split}
    (b',a', d', c') = \big(\lambda_X \psi_X(b,a,d,c)&, \lambda_Y \psi_Y(b,a,d,c), \\
    & \lambda_Z \psi_Z(b,a,d,c), \lambda_T \psi_T(b,a,d,c)\big).
    \end{split}
\end{equation} 
Looking at the first and second coordinates in the equations above, 
we have $$\frac{\lambda_X \psi_X(a,b,c,d)}
{\lambda_Y \psi_Y(a,b,c,d)} = \frac{a'}{b'} \text{ and } 
\frac{\lambda_X \psi_X(b,a,d,c)}
{\lambda_Y \psi_Y(b,a,d,c)} = \frac{b'}{a'}.$$
So, we see that
\begin{equation}\label{lambdaXY_squared}
   \Bigg( \frac{\lambda_Y}{\lambda_X} \Bigg)^2 = 
   \frac{\psi_X(a,b,c,d)\psi_X(b,a,d,c)}
   {\psi_Y(a,b,c,d)\psi_Y(b,a,d,c)},
\end{equation} 
and 
\begin{equation}\label{ab_squared}
   \Bigg( \frac{b'}{a'} \Bigg)^2 = 
\frac{\psi_X(b,a,d,c)\psi_Y(a,b,c,d)}
{\psi_X(a,b,c,d)\psi_Y(b,a,d,c)}.
\end{equation} 
Looking at the third and fourth coordinates of 
equations~\Cref{relationunscaledtoscaled:abcd:sigma0}
and~\Cref{relationunscaledtoscaled:abcd:sigma1}, we obtain 
$(\lambda_T/\lambda_Z)^2$ and $(d'/c')^2$.
Similarly, we can apply the action $\sigma_3$ to obtain 
$(\lambda_Z/\lambda_Y)^2 $, 
$\lambda_X\lambda_Z/(\lambda_Y\lambda_T)$, 
$(c'/b')^2$, and 
$a'c'/(b'd')$.

From $(\lambda_Y/\lambda_X)^2$, $(\lambda_T/\lambda_Z)^2$, 
and $\lambda_X\lambda_Z/(\lambda_Y\lambda_T)$, 
we can obtain the scalings 
$\lambda_X$, $\lambda_Y$, $\lambda_Z$, and $\lambda_T$ as 
described in~\Cref{alg:scalingsqrt}.

\begin{algorithm}
	\caption{$\textsf{Scaling}_{\text{sqrt}}$: find the scaling map when $N\geq 7$
   using square roots in $K$}
   \label{alg:scalingsqrt}
	\begin{algorithmic}[1]
        \REQUIRE{A map of degree~$N$, $\psi: \kgaudry_{a,b,c,d} 
        \rightarrow \widetilde{\K}$, 
        for $N \geq 7$.}
        \ENSURE{The isogeny $\varphi: \kgaudry_{a,b,c,d}\rightarrow \kgaudry_{a',b',c',d'}$
        with kernel generated by $R,S$.}
        \STATE $\psi = (\psi_X, \psi_Y, \psi_Z, \psi_T)$
        \STATE $a = \psi_X(a,b,c,d) \psi_X(b,a,d,c)
        \big(\psi_Y(a,b,c,d) \psi_Y(b,a,d,c)\big)^{-1}$
        \STATE $b = \psi_Z(a,b,c,d) \psi_Z(b,a,d,c)
        \big(\psi_T(a,b,c,d) \psi_T(b,a,d,c)\big)^{-1}$
        \STATE $s_a = \textsf{Sqrt}(a)$
        \STATE $s_b = \textsf{Sqrt}(b)$
        \STATE $\gamma = \psi_X(a,b,c,d) \psi_Z(d,c,b,a)
        \big(\psi_Y(a,b,c,d) \psi_T(d,c,b,a)\big)^{-1}$
        \FOR{$\alpha \in \{s_a, -s_a \}$}
         \FOR{$\beta \in \{s_b, -s_b \}$}
            \STATE \hspace{2ex} $\lambda_Y = \alpha$
            \STATE \hspace{2ex} $\lambda_Z = \alpha\beta$
            \STATE \hspace{2ex} $\lambda_T = \beta\gamma$
            \STATE \hspace{2ex} $\varphi = (\psi_X, \lambda_Y\psi_Y, \lambda_Z\psi_Z, 
            \lambda_T\psi_T)$\label{line:scalingsqrt:varphi}
            \IF{$\sigma_i\big(\varphi(X,Y,Z,T)\big) = 
            \varphi\big( \sigma_i(X,Y,Z,T)\big)$ for $i = 1,2,3$}
               \RETURN$\varphi$
            \ENDIF
         \ENDFOR
        \ENDFOR
        \RETURN $\bot$
    \end{algorithmic} 
\end{algorithm}

\begin{proposition}\label{prop:scalingsqrt}
   \Cref{alg:scalingsqrt} terminates in $\cost{sqrt}$, 
   where 
   $$\cost{sqrt} \leq 2 \ {\tt Sq} + 1 \ {\tt I} + 
   (10 + 12\ell) \ {\tt M},$$
   where $\ell \leq (N+1)(N+2)(N+3)/24$.
\end{proposition}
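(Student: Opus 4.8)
The plan is to treat this as an operation count for \Cref{alg:scalingsqrt}, preceded by a short verification that the square-root method is correct, since the bound is only meaningful once one knows the algorithm outputs a genuine $(N,N)$-isogeny. First I would check correctness. Because $N$ is odd, $\varphi$ is an isomorphism on the $2$-torsion subgroup, and with the normalisation fixed earlier in this subsection it commutes with $\sigma_1,\sigma_2,\sigma_3$; as in the discussion of the division polynomials following \Cref{order2actionGaudry}, this commutation is \emph{transparent} on the degree-$N$ coordinate forms, not merely projective. Evaluating \Cref{relationunscaledtoscaled:abcd:sigma0} and \Cref{relationunscaledtoscaled:abcd:sigma1} at the identity $(a,b,c,d)$ and at $(b,a,d,c)$, and using that none of the $\psi_\bullet$ vanishes at the relevant $2$-torsion points (a non-degeneracy hypothesis), gives \Cref{lambdaXY_squared}, \Cref{ab_squared}, their $(Z,T)$-analogues, and, from a second $2$-torsion point, the quantity $\gamma=\lambda_X\lambda_Z/(\lambda_Y\lambda_T)$. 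In particular the numbers $a$ and $b$ formed in the algorithm equal $(\lambda_Y/\lambda_X)^2$ and $(\lambda_T/\lambda_Z)^2$ for the true scaling, hence are squares in $K$, so \textsf{Sqrt} succeeds; after setting $\lambda_X=1$, each of the four sign choices $(\alpha,\beta)\in\{\pm s_a\}\times\{\pm s_b\}$ gives a candidate $(\lambda_Y,\lambda_Z,\lambda_T)=(\alpha,\alpha\beta,\beta\gamma)$, the true scaling is among them, and the $\sigma_i$-equivariance test ($i=1,2,3$, read modulo the domain equation) selects a valid one; any two valid choices differ only by a global sign, which is an automorphism of the Fast Kummer surface.

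For the count I would then go through the lines. The two \textsf{Sqrt} calls give $2\,{\tt Sq}$. Each of $a$, $b$, $\gamma$ is a product of two already-available evaluated coordinates divided by another such product; forming the three numerators and three denominators and then inverting the three denominators together by one Montgomery-style batch inversion costs $1\,{\tt I}$ and a fixed number of multiplications, after which $a,b,\gamma$ are recovered with three further multiplications. The \textbf{for} loop runs at most four times; each pass forms $\lambda_Z=\alpha\beta$ and $\lambda_T=\beta\gamma$ ($2\,{\tt M}$) and rescales $\psi_Y,\psi_Z,\psi_T$ coefficientwise, which is $3\ell\,{\tt M}$, where $\ell$ bounds the number of monomials of a degree-$N$ form lying in a single partition class, so $\ell\le(N+1)(N+2)(N+3)/24$. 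Over the four passes this contributes $12\ell\,{\tt M}$ plus a constant number of multiplications, and the equivariance checks only compare already-computed quantities. Collecting every constant contribution into $10\,{\tt M}$ gives $\cost{sqrt}\le 2\,{\tt Sq}+1\,{\tt I}+(10+12\ell)\,{\tt M}$, as claimed.

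The hard part is not the bookkeeping but the correctness input it rests on: being sure that the two \textsf{Sqrt} arguments are genuinely squares in $K$, so that the algorithm never returns $\bot$ when an $(N,N)$-isogeny with the given kernel exists, and that, up to the harmless global sign, exactly one of the four sign branches passes the equivariance test. This is where the odd degree of $\varphi$ and the transparent $2$-torsion action on the coordinate forms are essential, together with the non-vanishing of the $\psi_\bullet$ at the $2$-torsion points used. On the counting side the only delicate point is confirming that the three denominators can be inverted within the single-${\tt I}$ budget, i.e. that the batch-inversion layout fits; everything else in the count is routine.
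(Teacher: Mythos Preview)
Your approach is essentially the same as the paper's: both are straightforward operation counts for \Cref{alg:scalingsqrt}, arriving at $2\,{\tt Sq}$, one batched inversion, a constant number of multiplications for forming $a,b,\gamma$, and $3\ell$ multiplications per pass through the four-fold loop. The paper's proof is purely a line-by-line tally and does not discuss correctness at all; your added verification that the \textsf{Sqrt} arguments are genuine squares and that the sign-branch test selects a valid scaling is extra content beyond what the paper proves, though it is welcome context. One small caution: your accounting explicitly includes the $2\,{\tt M}$ per loop iteration for $\lambda_Z=\alpha\beta$ and $\lambda_T=\beta\gamma$, which over four passes already contributes $8\,{\tt M}$ before you add the batch-inversion and product-formation costs, so ``collecting every constant contribution into $10\,{\tt M}$'' does not quite close as stated; the paper sidesteps this by silently absorbing (or ignoring) those loop multiplications in its count of $8+2=10$.
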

\begin{proof}
   \Cref{alg:scalingsqrt} requires the computation of $2$ square roots 
   in $K$, as well as $8$ $K$-mutliplications and inverses of 
   $\psi_Y(a,b,c,d) \psi_Y(b,a,d,c)$,
   $\psi_T(a,b,c,d) \psi_T(b,a,d,c)$ and $\psi_Y(a,b,c,d) \psi_T(d,c,b,a)$ in $K$.
   Using batched inversion~\cite[\S 10.3.1]{Montgomery}, we can compute these 
   $3$ inverses with $2$ multiplications and $1$ inverse in $K$. 
    
   In Line~\ref{line:scalingsqrt:varphi}, we use $3\ell$ {\tt M}
   to calculate 
   $(\psi_X, \lambda_Y\psi_Y, \lambda_Z\psi_Z, 
   \lambda_T\psi_T)$, where $\ell \leq (N+1)(N+2)(N+3)/24$,
   as described in the proof of~\Cref{prop:scalingGE}. We run this at most $4$ times, 
   thus requiring at most $12\ell$ 
   $K$-multiplications.
\end{proof}

\begin{remark}\label{remark:cost-of-sqrt}
    The most costly operation in~\Cref{alg:scalingsqrt} 
    is the computation of square roots using \textsf{Sqrt}. 
    When ${\rm{char}}(K) = p$, 
    and $K = \mathbb{F}_{p^m}$ is a finite field 
    (for some $m \in \mathbb{N}$), we compute square roots using the 
    Tonelli--Shanks algorithm~\cite{Tonelli,Shanks} using Scott's optimisation in~\cite{Scott}. This costs $2$ exponentiations and a few multiplications 
    and additions in $K$.

    When $K = \Q$, we can use, for example, the `Karatsuba Square Root'
    algorithm, as depicted by Brent and Zimmermann~\cite[Algorithm 1.12]{MCA},
    for computing square roots in $\Z$ (as 
    used in the GNU Multiple Precision Arithmetic Library~\cite{gmp})
    on the numerator and denominator of our rational number. This costs 
    $O(\frac{3}{2}M(N/2))$, where $M(n)$ is the time to multiply two numbers of $n$ limbs 
    using Karatsuba multiplication and $O(6M(N/2))$ when using FFT multiplication.
\end{remark}

Using the previous method depicted in~\Cref{alg:scalingsqrt}, it is 
not necessary to compute square roots in $K$ if the only thing required 
is to compute the constants $E', F', G', H' \in K$ 
appearing in the equation defining 
the image Kummer surface. Indeed, from $(b'/a')^2$, 
$(d'/c')^2$, $(c'/b')^2$, and
$a'c'/(b'd')$ we can compute $(a^2, b^2, c^2, d^2)$ (projectively). 
This leads us to the definition of~\Cref{alg:getimage}, which on input 
$\psi$ will output the constants $(E', F', G', H')$.
In this way, we only require the square roots in order to compute the isogeny $\varphi$, 
and therefore to push points through the isogeny. 

\Cref{alg:getimage} terminates in at most $34${\tt M}, 
$4${\tt S}, $1${\tt I}, and $20${\tt a}.
Note, here we are computing the $7$ inverses in
Lines~\ref{line:getimage:inverse1} to~\ref{line:getimage:inverse3},
Line~\ref{line:getimage:inverse4}, and
Lines~\ref{line:getimage:inverse5} to~\ref{line:getimage:inverse7} 
using batched inversions~\cite[\S 10.3.1]{Montgomery}.

\begin{algorithm}
	\caption{$\textsf{GetImage}$: find the constants in the equation defining the 
   image of an $(N,N)$-isogeny when $N\geq 7$}
   \label{alg:getimage}
	\begin{algorithmic}[1]
        \REQUIRE{A map of degree~$N$, $\psi: \kgaudry_{a,b,c,d} 
        \rightarrow \widetilde{\K}$, 
        for $N \geq 7$.}
        \ENSURE{The constants $(E', F', G', H')$ 
        in the equation defining $\kgaudry_{a',b',c',d'}$}
        \STATE $\psi = (\psi_1, \psi_2, \psi_3, \psi_4)$
        \STATE $\alpha_{21} = \psi_1(b,a,d,c)\psi_2(a,b,c,d)
        \big(\psi_1(a,b,c,d)\psi_2(b,a,d,c)\big)^{-1}$\label{line:getimage:inverse1}
         \STATE $\alpha_{43} = \psi_3(b,a,d,c)\psi_4(a,b,c,d
         )\big(\psi_3(a,b,c,d)\psi_4(b,a,d,c)\big)^{-1}$\label{line:getimage:inverse2}
         \STATE $\alpha_{32} = \psi_2(d,c,b,a)\psi_3(a,b,c,d)
         \big(\psi_2(a,b,c,d)\psi_3(d,c,b,a)\big)^{-1}$\label{line:getimage:inverse3}
         \STATE $\alpha_{31} = \alpha_{32}\cdot \alpha_{21}$
         \STATE $\alpha_{41} = \alpha_{43} \cdot \alpha_{31}$
         \STATE $\beta = \psi_1(a,b,c,d)\psi_2(d,c,b,a)\big(\psi_2(a,b,c,d)
         \psi_1(d,c,b,a)\big)^{-1}$\label{line:getimage:inverse4}
         \STATE $(a_2, b_2, c_2, d_2) = (1, \alpha_{21}, \alpha_{31}, \alpha_{41})$
         \STATE $A,B,C,D = \mathcal{H}(a_2, b_2, c_2, d_2)$
         \STATE $(a_4, b_4, c_4, d_4) = \mathcal{S}(a_2, b_2, c_2, d_2)$
         \STATE $\gamma_1 = (a_2d_2-b_2c_2)^{-1}$\label{line:getimage:inverse5}
         \STATE $\gamma_2 = (a_2c_2-b_2d_2)^{-1}$\label{line:getimage:inverse6}
         \STATE $\gamma_3 = (a_2b_2-c_2d_2)^{-1}$\label{line:getimage:inverse7}
         \STATE $E = \beta \gamma_1\gamma_2\gamma_3 b_2d_2ABCD$
         \STATE $F = \gamma_1\cdot (a_2^2 - b_2^2 - c^2_2 + d^2_2)$
         \STATE $G = \gamma_2 \cdot (a_2^2 - b_2^2 + c^2_2 - d^2_2)$
         \STATE $H = \gamma_3 \cdot (a_2^2 + b_2^2 - c^2_2 - d^2_2)$
        \RETURN $(E, F, G, H)$ 
    \end{algorithmic} 
\end{algorithm}

\subsubsection{Compute the image constants $(a', b',c',d')$}
We now have the $(N,N)$-isogeny $\varphi = \lambda \circ \psi$ where 
$$\lambda: (X,Y,Z,T) \mapsto 
(\lambda_X X, \lambda_Y Y, \lambda_Z Z, \lambda_T T),$$ 
is the scaling map obtained from the previous step. We compute
$(a', b',c',d')$ by evaluating the $(N,N)$-isogeny $\varphi$
at $(X,Y,Z,T) = (a,b,c,d)$. 
This costs at most $4\big( {{N+3}\choose{3}} + N + 1\big) = O(N^3)$
multiplications in $K$. Indeed, it costs $4(N+1)$ 
$K$-multiplications to compute all powers 
of $a, b, c, d$ up to $a^N, b^N, c^N, d^N$, and then 
at most $3{{N+3}\choose{3}}$ $K$-multiplications to evaluate all the 
monomials in $\varphi$ from these. Finally, it costs at most 
${{N+3}\choose{3}}$ $K$-multiplications to multiply these evaluated
monomials with their coefficients.  

\subsubsection{Complexity of finding an $(N,N)$-isogeny between 
Fast Kummer surfaces}\label{subsubsection:fullalgorithm-fast}

Putting together all steps of the algorithm we obtain 
\textsf{GetIsogeny}, given by~\Cref{alg:getisogeny}, which 
on input $N$-torsion points $R,S$ on Fast Kummer surface $\kgaudry_{a,b,c,d}$
generating a maximal isotropic subgroup of $N$-torsion group of $\kgaudry$, 
will output the
$(N,N)$-isogeny $\varphi$ with kernel $\langle R, S \rangle$. Here, we assume \Cref{conj:basis-space}, as otherwise the bases formed in Lines~\ref{line:findisogeny:findbasisR} and~\ref{line:findisogeny:findbasisS} may be wrong.
The costs $\cost{GE}$, $\cost{sqrt}$ 
in Line~\ref{line:findisogeny-cost} are as defined in~\Cref{prop:scalingGE} and~\Cref{prop:scalingsqrt}, 
respectively.

\begin{algorithm}
	\caption{$\textsf{GetIsogeny}$: find the isogeny $\varphi$ with kernel generated by $N$-torsion 
   points $R,S$ on $\kgaudry$ assuming~\Cref{conj:basis-space}}
   \label{alg:getisogeny}
	\begin{algorithmic}[1]
        \REQUIRE{$N$-torsion points $R,S$ on Fast Kummer surface $\kgaudry_{a,b,c,d}$}
        \ENSURE{The isogeny $\varphi: \kgaudry_{a,b,c,d}\rightarrow \kgaudry_{a',b',c',d'}$
        with kernel generated by $R,S$.}
        \STATE $\mathbf{R} = (2R, 3R, \dots, ((N-1)/2)R)$\label{line:getisogeny:scalarmultR}
        \STATE $\mathbf{S} = (2S, 3S, \dots, ((N-1)/2)S)$\label{line:getisogeny:scalarmultS}
        \STATE $\mathcal{I}_N = \{ (1,\dots,1), (1, \dots, 2), \dots, (2, \dots, 2),
        (3,\dots,3), (3, \dots, 4), \dots, (4, \dots, 4) \}$
        \STATE $B^{(1)}_R, B^{(2)}_R, B^{(3)}_R, B^{(4)}_R = 
        \textsf{FindBasis}(R, \mathbf{R}, \mathcal{I}_N)$\label{line:findisogeny:findbasisR}
        \STATE $B^{(1)}_S, B^{(2)}_S, B^{(3)}_S, B^{(4)}_S  = 
        \textsf{FindBasis}(S, \mathbf{S}, \mathcal{I}_N)$\label{line:findisogeny:findbasisS}
        \STATE Reduce forms in $B_R$ and $B_S$ modulo the Kummer equation 
        defining $\kgaudry$\label{line:findisogeny:reducing}
        \FOR{$i = 1$ to $4$}
            \STATE $\psi_i = \textsf{FindIntersection}(B^{(i)}_R, B^{(i)}_S)$\label{line:findisogeny:findintersection}
        \ENDFOR
        \STATE $\psi = (\psi_1, \psi_2, \psi_3, \psi_4)$
        \IF{$N = 5$}\label{line:findisogeny:startfifloop}
            \RETURN $\textsf{Scaling}_{5}(\psi)$
        \ELSIF{$\cost{GE} \leq \cost{sqrt}$}\label{line:findisogeny-cost}
            \RETURN $\textsf{Scaling}_{\text{GE}}(\psi)$
        \ELSE 
        \RETURN $\textsf{Scaling}_{\text{sqrt}}(\psi)$
        \ENDIF\label{line:findisogeny:endifloop}
        \RETURN $\bot$
    \end{algorithmic} 
\end{algorithm}

Asymptotically,~\Cref{alg:getisogeny} is dominated by the call \textsf{FindBasis} in Lines~\ref{line:findisogeny:findbasisR} and~\ref{line:findisogeny:findbasisS}. The cost of obtaining multiples of $R$, namely
$2R, \dots, ((N-1)/2)R$, in Line~\ref{line:getisogeny:scalarmultR} of~\Cref{alg:getisogeny}
is at most $\sum_{n = 2}^{(N-1)/2} 9\big\lceil \log_2(n) \big\rceil$ {\tt S} and 
$\sum_{n = 2}^{(N-1)/2} 16\big\lceil \log_2(n) \big\rceil$ {\tt M}~\cite[Theorem 3.6]{GaudryFastGenus2}. Furthermore, reducing the basis elements in $B_R$ and $B_S$ modulo the Kummer equation in 
Line~\ref{line:findisogeny:reducing} costs at most a handful of additions.

Our optimised implementation shows that in the case of Fast Kummer surfaces
$\cost{biquad} \leq 12{\tt S} + 43 {\tt M} + 25 {\tt a}$. 
Therefore, we have that the bottleneck step (asymptotically) costs 
$$\cost{basis} \leq (3^{(N+3)/2} + 4 \cdot 3^{(N-1)/2} - 27) {\tt M}_{\text{poly}} + 6(N-1){\tt S} + \frac{43}{2}(N-1) {\tt M} + \frac{25}{2}(N-1) {\tt a}$$ for such Kummer surface models.

\bigskip\bigskip\bigskip

\section{Implementation and Performance}\label{section6}
In this section, we investigate the performance of our
algorithms when applied to Fast Kummer surfaces, as 
implemented in~\cite{github}.
\par
We will fix $K = \mathbb{F}_{p^m}$ to be a finite field,
for some $m \in \mathbb{N}$.
To set up our experiments, we consider $p$ be a prime such that $2^4N | p+1$.
By restricting to superspecial Fast Kummer surfaces, which are 
therefore defined over $\mathbb{F}_{p^2}$, we can fix $m = 2$ and
our choice of prime ensures that we have full $K$-rational $2$-torsion and 
$K$-rational $N$-torsion that will generate the $(N,N)$-isogeny.
The 
experiments were run in {\tt MAGMA} V.2.25-6 on 
Intel(R) Core{\texttrademark} i7-1065G7 CPU @ 1.30GHz $\times$ 8
with $15.4$ GiB memory.

\begin{remark}
   Restricting our experiments to superspecial Kummer surfaces is 
   inspired by the setup of various cryptographic primitives 
   constructed in isogeny-based cryptography 
   (see, for example,~\cite{flynnti,hashfunction,CorteRealSantosCostelloSmithIsogenies}), and allows us to easily 
   obtain $\mathbb{F}_{p^2}$-rational $(N,N)$-isogenies. 
   We replicate this setup to demonstrate the efficiency of our algorithms, 
   but emphasise that the methods presented in this paper hold for more general fields $K$. 
   We remark, however, that the performance of our methods for large $N$ may be hindered by
   coefficient blow-up for these more general fields. 
\end{remark}

\subsection{Evaluating the scaling algorithms}\label{subsec:eval-scaling}

We first analyse Method (2) and (3) of finding the 
final scaling map described in~\Cref{subsubsec:scalingGE-fast,subsubsec:scalingsqrt-fast},
repsectively. In particular, we investigate how $\cost{GE}$
and $\cost{sqrt}$ vary with $N$ and $p$. This is necessary to 
precisely determine the condition in Line~\ref{line:findisogeny-cost}
of~\Cref{alg:getisogeny}.
\par
Using a cost metric of $1 {\tt I} = \log_2(p) {\tt M}$ and $1 {\tt a} = 0 {\tt M}$
for $K = \F_{p^2}$,~\Cref{prop:scalingGE} tells us that 
$$\cost{GE}
    \leq \frac{N^9}{41472} + O(N^8) \, {\tt M}
    $$
Additionally,~\Cref{prop:scalingsqrt} 
and~\Cref{remark:cost-of-sqrt} 
show that $\cost{sqrt}$ is approximately equal to $4$ exponentiations and $1$ inversion in 
$\F_{p^2}$, for which the cost is around $(5\log_2(p) + (N+1)(N+2)(N+3)/2 + 10)$ {\tt M}.  
From this, we expect that $\cost{GE} \leq \cost{sqrt}$ for $N$ and $p$ 
such that $$N^9 - 20736N^3 - 124416N^2 - 228096N - 539136 \leq 207360\log_2(p).$$ 
For example, when $N = 7$, we have that $\cost{GE} \leq \cost{sqrt}$ for $\log_2(p) \geq 121$.

To confirm the observations from our theoretical costs, 
we run two sets of experiments: 
(1) fix $N = 7$, and vary $p$, where
$\log_2(p)$ ranges from $12$ to $850$; 
(2) fix $p$ where $\log_2(p) = 100$, 
and vary $N$ from $7$ to $19$. The results are shown in~\Cref{fig:whichscaling-fixedN7}
and~\Cref{fig:whichscaling-fixedp}, respectively. 

 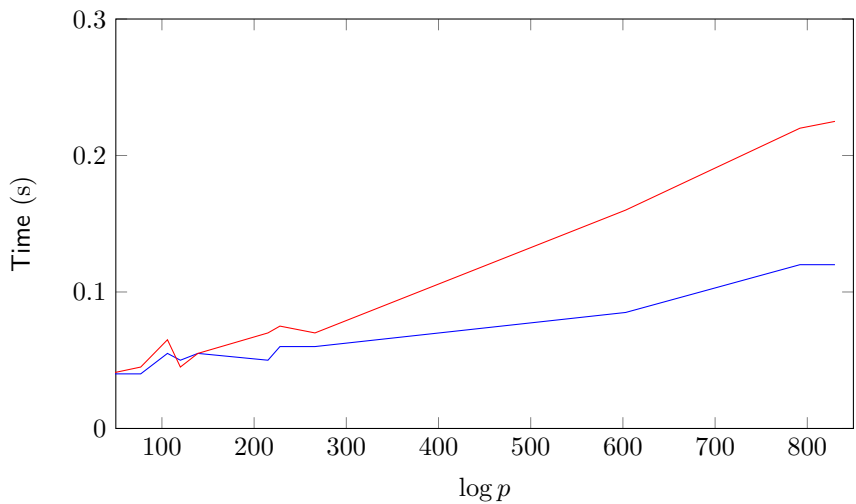
\begin{figure}[t]
     \centering
      \begin{tikzpicture}
          \begin{axis}[
             xlabel={$\log p$},
             ylabel={$\mathsf{Time}$ (s)},
             width=0.9\textwidth,
             height=200,
             ymin=000000, 
             ymax=0.3,
             xmin=50,
             xmax=850,
             xtick={0,100,...,850}
          ]
          
          \addplot[blue]                  table [mark=none, col sep=comma] {fixedN-method-2.txt};
          \addplot[red]                   table [mark=none, col sep=comma] {fixedN-method-3.txt};
          \end{axis}
  
      \end{tikzpicture}
     \caption{The time taken (in seconds) for \textsf{GetIsogeny} with method 2 of scaling $\textsf{Scaling}_{\text{GE}}$ 
     (in blue) and with method 3 of scaling$\textsf{Scaling}_{\text{sqrt}}$ (in red) 
     for a range of odd primes 
     $p$ and fixed prime $N = 7$. For each prime $p$, we average the time taken over $50$ runs.}\label{fig:whichscaling-fixedN7}
  \end{figure}
 
In~\Cref{fig:whichscaling-fixedN7}, we confirm that for $N = 7$, 
$\cost{GE} = \cost{sqrt}$ for $\log_2(p) \approx 140$, which is 
close to our theoretical estimate.
We remark that for larger $N$, this crossover point
will be larger. 

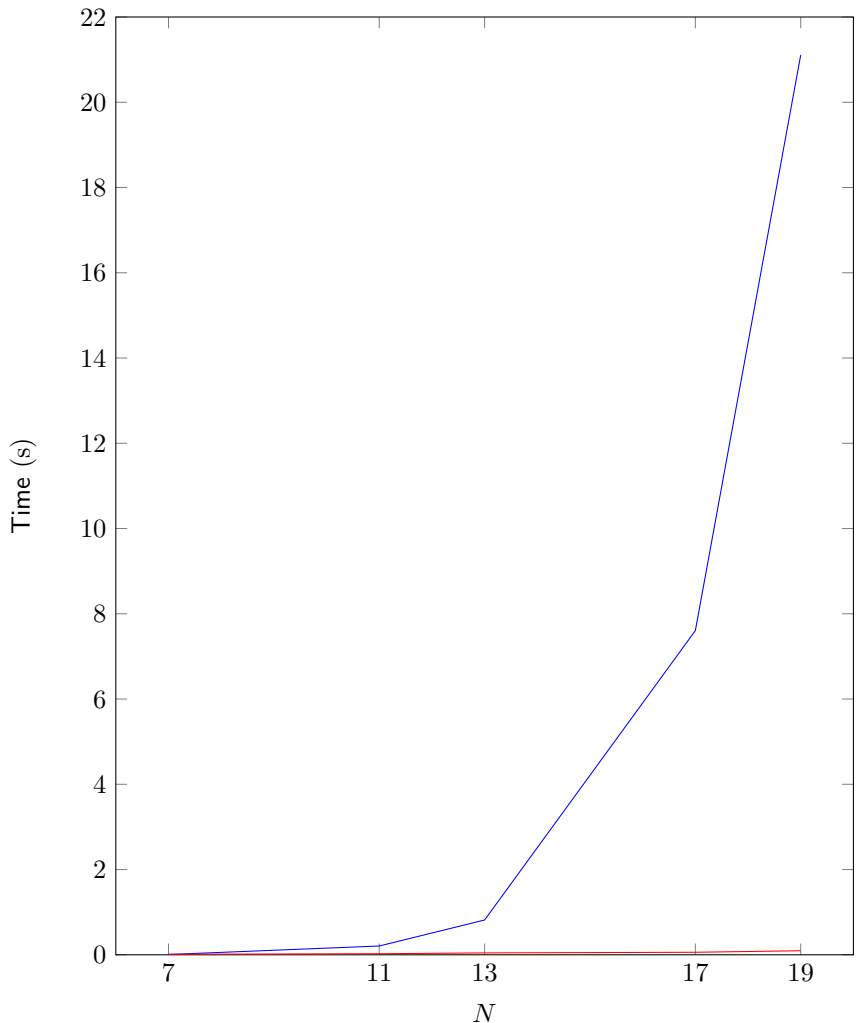
\begin{figure}[t]
   \centering
    \begin{tikzpicture}
        \begin{axis}[
            xlabel={$N$},
            ylabel={$\mathsf{Time}$ (s)},
            width=0.9\textwidth,
            height=400,
            ymin=0, 
            ymax=22,
            xmin=6,
            xmax=20,
            xtick={7,11,13,17,19}
        ]
        
        \addplot[blue]                  table [mark=none, col sep=comma] {fixedp-method-2.txt};
        \addplot[red]                   table [mark=none, col sep=comma] {fixedp-method-3.txt};
      \end{axis}

    \end{tikzpicture}  
   \caption{The time taken (in seconds) of method 2 of scaling $\textsf{Scaling}_{\text{GE}}$ 
   (in blue) and method 3 of scaling $\textsf{Scaling}_{\text{sqrt}}$ (in red) 
   for a range of odd primes $N$ and fixed prime $\log_2(p) \approx 100$. 
   For each $N$, we average the time taken over $50$ runs.}\label{fig:whichscaling-fixedp}
\end{figure}

In~\Cref{fig:whichscaling-fixedp} we see that the 
cost of $\textsf{Scaling}_{\text{sqrt}}$ increases at a 
slower rate with $N$ than the cost of 
$\textsf{Scaling}_{\text{GE}}$,
as predicted by our theoretical costs. 
We also ran experiments for $\log_2(p) \approx 500$ and observed
the same trend.

\subsection{Evaluation performance of algorithms}
In~\Cref{tab:results-getisogeny}, we give timings for different sections of~\Cref{alg:getisogeny} to compute an $(N,N)$-isogeny given 
$N$-torsion points 
$R,S$ generating kernel on domain Kummer surface $\kgaudry$ defined over 
$K$ for $N \geq 7$.

As expected, the sub-algorithm \textsf{FindBasis} (see~\Cref{alg:findbasis}) is the bottleneck step of 
\textsf{GetIsogeny} (see~\Cref{alg:getisogeny}). In comparison, the other sub-algorithms exhibit 
practical runtimes. Obtaining a method for finding the basis of spaces
$\spaceR$ and $\spaceS$ that scales polynomially with $N$ would have a large impact on the practicality of these 
methods for large $N$. 

\begin{table}[h!]
	\centering 
	\renewcommand{\tabcolsep}{0.08cm}
	\renewcommand{\arraystretch}{1.5}
   \begin{tabular}{cc|cccc|}
      \cline{3-6}
      \multicolumn{2}{l|}{}      & \multicolumn{4}{c|}{Time taken (s)}                                                                                                                                                                         \\ \hline
      \multicolumn{1}{|l|}{$N$}  & \multicolumn{1}{|l|}{$\lceil \log_2(p) \rceil$}  & \multicolumn{1}{c|}{\textsf{FindBasis}} & 
      \multicolumn{1}{c|}{\textsf{FindIntersection}} & 
      \multicolumn{1}{c|}{\begin{tabular}[c]{@{}c@{}}\textsf{Scaling}\\ (GE/sqrt for $N\geq 7$)\end{tabular}} & 
      \textsf{GetIsogeny}  \\ \hline
      \multicolumn{1}{|c|}{$5$} & \multicolumn{1}{|c|}{$104$}  & \multicolumn{1}{c|}{0.005}& \multicolumn{1}{c|}{0.001} 
      & \multicolumn{1}{c|}{0.000} & 0.010 \\
      \multicolumn{1}{|c|}{$7$}  & \multicolumn{1}{|c|}{$106$}& \multicolumn{1}{c|}{0.021}& \multicolumn{1}{c|}{0.002} 
      & \multicolumn{1}{c|}{0.007/0.012} & 0.045/0.050  \\
      \multicolumn{1}{|c|}{$11$} & \multicolumn{1}{|c|}{$95$}& \multicolumn{1}{c|}{0.721} & \multicolumn{1}{c|}{0.012} 
      & \multicolumn{1}{c|}{0.206/0.025} & 1.040/0.859  \\
      \multicolumn{1}{|c|}{$13$} & \multicolumn{1}{|c|}{$99$}& \multicolumn{1}{c|}{4.157} & \multicolumn{1}{c|}{0.025} 
      & \multicolumn{1}{c|}{0.816/0.044} & 5.238/4.466 \\
      \multicolumn{1}{|c|}{$17$} & \multicolumn{1}{|c|}{$94$}& \multicolumn{1}{c|}{85.267}  & \multicolumn{1}{c|}{0.048}  
      & \multicolumn{1}{c|}{7.605/0.059} & 93.838/86.292 \\
      \multicolumn{1}{|c|}{$19$}& \multicolumn{1}{|c|}{$105$} & \multicolumn{1}{c|}{416.329} & \multicolumn{1}{c|}{0.081} 
      & \multicolumn{1}{c|}{21.109/0.095}  & 439.464/418.450 \\ \hline
   \end{tabular}
	\vspace{0.3cm}
	\caption{Comparison of time taken for different sub-algorithms of \textsf{GetIsogeny}
   for various odd $N \geq 5$ using both scaling methods for $K = \mathbb{F}_{p^2}$
   with $\log_2(p) \approx 100$. We take the average over $50$ runs.}\label{tab:results-getisogeny}
\end{table}

In~\Cref{tab:results-getimage}, we give timings for \textsf{GetImage}
(\Cref{alg:getimage}), which computes the constants $E',F', G', H'$
in the defining equation of the image Kummer surface $\kgaudry_{a',b',c',d'}$ 
of the $(N,N)$-isogeny $\varphi$ with kernel generated by $N$-torsion points 
$R,S$,
given the degree-$N$ map $\psi: \kgaudry_{a,b,c,d} \rightarrow \widetilde{\K}$ (i.e., the $(N,N)$-isogeny 
before the final scaling).

\begin{table}[h!]
	\centering 
	\renewcommand{\tabcolsep}{0.08cm}
	\renewcommand{\arraystretch}{1.5}
   \begin{tabular}{|c|c|c|}
      \hline
      \multicolumn{1}{|l|}{$N$} & $\lceil \log_2(p) \rceil$ & Time taken for \textsf{GetImage} (s) \\ \hline
      \multirow{1}{*}{$7$}      & $106$  & 0.004 \\
      \multirow{1}{*}{$11$}     & $95$  & 0.014        \\
      \multirow{1}{*}{$13$}     & $99$ & 0.018          \\
      \multirow{1}{*}{$17$}     &  $94$ &  0.030  \\
      \multirow{1}{*}{$19$}     &  $105$  &  0.042 \\ \hline
      \end{tabular}
	\vspace{0.3cm}
	\caption{The time taken for \textsf{GetImage} for various odd $N \geq 7$. We fix the 
   base field $K = \mathbb{F}_{p^2}$ with $90 \leq \log_2(p) \leq 120$, 
   and average the time taken over $50$ runs.}\label{tab:results-getimage}
\end{table}

\subsection{Performance comparison with previous works}
We now compare the performance of our methods to the software 
package {\tt AVIsogenies v0.7}~\cite{AVIsogenies} for odd prime $N$.
For this comparison, we run the function {\tt IsogenieG2Theta.m}, which computes 
an isogeny from an abelian variety (where some precompuation is done). It takes as 
input a kernel in theta coordiantes (i.e., the points $R, S, R+S$ in theta 
coordinates) and outputs the \emph{theta null point} of the isogeneous Kummer surface.
For the fast Kummer surface $\kgaudry_{a,b,c,d}$, 
the theta null point is $(a, b, c, d)$, 
and thus this function is comparable to running our algorithm \textsf{GetIsogeny} 
(chosing the best scaling method for each $N$ and $p$ as per~\Cref{subsec:eval-scaling})
to obtain the $(N, N)$-isogeny 
$\varphi$ and then computing $\varphi((a,b,c,d)) = (a', b', c', d')$. 
\par
To compare our software with {\tt AVIsogenies} we therefore run the two algorithms 
described above for $N = 5, 7, 11, 13, 17$ for a fixed prime $p$ with $90 \leq \log_2(p) \leq 120$, 
taking the average time taken over $10$ runs. The results are given in~\Cref{tab:avisogenies-comparison}. We observe that our methods are 
comparable and 
outperform {\tt IsogenieG2Theta.m} for small $N = 5, 7, 11$. 
Furthermore, there is the benefit of our method that we
recover the explicit isogeny formul\ae{}, as well as the image 
constants $(a', b', c', d')$. 
For larger $N$, 
our algorithm is slower, mainly due to the fact that 
the subroutine \textsf{FindBasis} scales exponentially with $N$,
but we note again that our method also
recovers the explicit isogeny formul\ae{}, as well as the 
image constants $(a', b', c', d')$.
\par
We would like to emphasise that we do not intend timings to be
a complete description of the
performance of these algorithms; a fair comparison can only be made with 
precise operation counts. We merely present this broad comparison to 
showcase the 
interest in exploring these methods, and highlight important future work 
in improving the scalability of algorithm \textsf{FindBasis} 
(see~\Cref{alg:findbasis}).

\begin{table}[h!]
	\centering 
	\renewcommand{\tabcolsep}{0.08cm}
	\renewcommand{\arraystretch}{1.5}
   \begin{tabular}{cc|cc|}
      \cline{3-4}
      \multicolumn{2}{l|}{}      & \multicolumn{2}{c|}{Time taken (s)}   \\ \hline
      \multicolumn{1}{|l|}{$N$} & \multicolumn{1}{|l|}{$\big\lceil\log_2(p)\big\rceil$} 
       & \multicolumn{1}{c|}{This work} & 
      \multicolumn{1}{c|}{{\tt AVIsogenies}}  \\ \hline
      \multicolumn{1}{|c|}{$5$} & \multicolumn{1}{|c|}{$121$} & \multicolumn{1}{c|}{0.011}& \multicolumn{1}{c|}{0.022} \\
      \multicolumn{1}{|c|}{$7$} & \multicolumn{1}{|c|}{$120$} & \multicolumn{1}{c|}{0.060}& \multicolumn{1}{c|}{0.242} \\
      \multicolumn{1}{|c|}{$11$} &\multicolumn{1}{|c|}{$95$} & \multicolumn{1}{c|}{0.921} & \multicolumn{1}{c|}{1.402} \\
      \multicolumn{1}{|c|}{$13$} & \multicolumn{1}{|c|}{$98$} & \multicolumn{1}{c|}{4.769} & \multicolumn{1}{c|}{0.126} \\ 
      \multicolumn{1}{|c|}{$17$} & \multicolumn{1}{|c|}{$94$}  & \multicolumn{1}{c|}{95.060}  & \multicolumn{1}{c|}{0.226} \\ 
      \multicolumn{1}{|c|}{$19$} & \multicolumn{1}{|c|}{$108$}  & \multicolumn{1}{c|}{423.276}  & \multicolumn{1}{c|}{12.752}\\ 
      \hline
   \end{tabular}
	\vspace{0.3cm}
	\caption{The time taken for {\tt IsogenieG2Theta.m} in {\tt AVIsogenies} 
   and \textsf{GetIsogeny} with evaluation to compute the image theta constants 
   $(a', b', c', d')$ of an $(N, N)$-isogeny for various odd $N \geq 7$. We fix the 
   base field $K = \mathbb{F}_{p^2}$ with $90 \leq \log_2(p) \leq 125$, and 
   average the time taken over $50$ runs.}\label{tab:avisogenies-comparison}
\end{table}

\begin{remark}
   Unlike the algorithms presented in this paper, the complexity of the algorithms that we use from {\tt AVIsogenies} depends on $N \bmod 4$ (see, for example,~\cite[pg. 199]{LubiczRobertComputing}). This is shown by our experiments in~\Cref{tab:avisogenies-comparison}, which exhbits how $N = 11$ and $N = 19$ perform comparitively worse than $N = 13$ and $17$. 
\end{remark}

\end{document}